\documentclass[11pt]{article}
\usepackage{amsmath}
\usepackage{amssymb}
\usepackage{amsthm}
\usepackage{amsfonts}
\usepackage{color}

\setlength{\textwidth}{6.50 true in}
\setlength{\oddsidemargin}{-0.125 true in}
\setlength{\evensidemargin}{-0.125 true in}
\setlength{\textheight}{8.5 true in}
\setlength{\topmargin}{-0.5 true in}
\setlength{\parindent}{15pt}
\setlength{\parskip}{5pt}

\newcounter{AbcT}

\newtheorem {Theorem}    {Theorem}[section]

\newtheorem {Lemma}      [Theorem]    {Lemma}
\newtheorem {Corollary}  [Theorem]    {Corollary}
\newtheorem {Proposition}[Theorem]    {Proposition}

\theoremstyle{definition}   
\newtheorem {Remark}[Theorem]           {Remark}  


\newcommand{\ignore}[1]{}

\newcommand{\sgn}{{\mbox{{\rm sgn}}}}

\def\eps{\epsilon}
\def\E{{\bf{E}}}
\def\P{{\bf{P}}}
\def\Var{{\bf{Var}}}

\def\R{{\mathbb{R}}}

\def\CalN{{\cal{N}}}
\def\CalM{{\cal{M}}}

\def\F{{\cal{F}}}

\def\v0{{\bf 0}}

\def\0{\hat{0}}
\def\1{\hat{1}}

\def\path{{\tt path}}

\def\phi{\varphi}

\def\be{\begin{equation}}
\def\ee{\end{equation}}

\definecolor{Red}{rgb}{1,0,0}
\definecolor{Blue}{rgb}{0,0,1}
\definecolor{Olive}{rgb}{0.41,0.55,0.13}
\definecolor{Green}{rgb}{0,1,0}
\definecolor{MGreen}{rgb}{0,0.8,0}
\definecolor{DGreen}{rgb}{0,0.55,0}
\definecolor{Yellow}{rgb}{1,1,0}
\definecolor{Cyan}{rgb}{0,1,1}
\definecolor{Magenta}{rgb}{1,0,1}
\definecolor{Orange}{rgb}{1,.5,0}
\definecolor{Violet}{rgb}{.5,0,.5}
\definecolor{Purple}{rgb}{.75,0,.25}
\definecolor{Brown}{rgb}{.75,.5,.25}
\definecolor{Grey}{rgb}{.5,.5,.5}
\definecolor{Black}{rgb}{0,0,0}

\title{A Quantitative Arrow Theorem}

\author{Elchanan Mossel\thanks{Weizmann Institute and U.C. Berkeley. Supported by an Alfred Sloan fellowship
  in Mathematics, by NSF CAREER grant DMS-0548249 (CAREER), by DOD ONR grant (N0014-07-1-05-06), by BSF grant 2004105 and by ISF grant
  1300/08}}

\begin{document}
\maketitle

\begin{abstract}
Arrow's Impossibility Theorem states that any constitution which satisfies Independence of Irrelevant Alternatives (IIA) and Unanimity and is not a Dictator has to be non-transitive. In this paper we study quantitative versions of Arrow theorem. Consider $n$ voters who vote independently at random, each following the uniform distribution over the $6$ rankings of $3$ alternatives.
Arrow's theorem implies that any constitution which satisfies IIA and Unanimity and is not a dictator has a probability of at least $6^{-n}$ for a non-transitive outcome. When $n$ is large, $6^{-n}$ is a very small probability, and the question arises if for large number of voters it is possible to avoid paradoxes with probability close to $1$.

Here we give a negative answer to this question by proving that for every $\eps > 0$, there exists a $\delta = \delta(\eps) > 0$,
which depends on $\eps$ only, such that for all $n$, and all constitutions on $3$ alternatives, if the constitution satisfies:
\begin{itemize}
\item
The IIA condition.
\item
For every pair of alternatives $a,b$, the probability that the constitution ranks $a$ above $b$ is at least $\eps$.
\item
For every voter $i$, the probability that the social choice function agrees with a dictatorship on $i$ at most $1-\eps$.
\end{itemize}
Then the probability of a non-transitive outcome is at least $\delta$.

Our results generalize to any number $k \geq 3$ of alternatives and to other distributions over the alternatives. We further derive a quantitative characterization of all social choice functions satisfying the IIA condition whose outcome is transitive with probability at least $1-\delta$.  Our results provide a quantitative statement of Arrow theorem and its generalizations and strengthen results of Kalai and Keller who proved quantitative Arrow theorems for $k=3$ and for {\em balanced} constitutions only, i.e., for constitutions which satisfy for every pair of alternatives $a,b$, that the probability that the constitution ranks $a$ above $b$ is {\em exactly} $1/2$.

The main novel technical ingredient of our proof is the use of inverse-hypercontractivity to show that if the outcome is transitive with high probability then there are no two different voters who are pivotal with for two different pairwise preferences with non-negligible probability.
Another important ingredient of the proof is the application of non-linear invariance to lower bound the probability of a paradox for constitutions where all voters have small probability for being pivotal.
\end{abstract}
\newpage

\section{Introduction}
\subsection{Background on Arrow's Theorem}
Arrow's Impossibility theorem~\cite{Arrow:50,Arrow:63} states that certain desired properties cannot hold simultaneously for constitutions on three or more alternatives. Arrow's results were fundamental in the development of social choice theory in Economics. The most celebrated results in this area are Arrow's Impossibility theorem and the Gibbard-Satterthwaite~\cite{Gibbard:73,Satterthwaite:75} Manipulation theorem. Both results demonstrate the non-existence of ranking and voting schemes with very natural properties.

Arrow's theorem demonstrates that IIA, Transitivity and Non-dictatorship, all of which will be defined below, cannot hold simultaneously.
Quantitative versions of Arrow's theorems prove tradeoff between being "close to transitive"
and being "close to a dictator" assuming the IIA property holds. We proceed with a more formal discussion of Arrow's theorem. 

Consider $A = \{a,b,\ldots,\}$, a set of $k \geq 3$ alternatives. A {\em transitive preference} over $A$ is a ranking of the alternatives from top to bottom where ties are not allowed. Such a ranking corresponds to a {\em permutation} $\sigma$ of the elements $1,\ldots,k$ where $\sigma_i$ is the rank of alternative $i$.

A {\em constitution} is a function $F$ that associates to every $n$-tuple $\sigma = (\sigma(1),\ldots,\sigma(n))$ of transitive preferences (also called a {\em profile}), and every pair of alternatives $a,b$ a preference between $a$ and $b$. Some key properties of constitutions include:
\begin{itemize}
\item
{\em Transitivity}. The constitution $F$ is {\em transitive} if $F(\sigma)$ is transitive for all $\sigma$.
In other words, for all $\sigma$ and for all three alternatives $a,b$ and $c$, if $F(\sigma)$ prefers $a$ to $b$, and prefers $b$ to $c$, it also prefers $a$ to $c$. Thus $F$ is transitive if and only if its image is a subset of the permutations on $k$ elements.
\item
{\em Independence of Irrelevant Alternatives (IIA).} The constitution $F$ satisfies the IIA property if for every pair of alternatives $a$ and $b$, the social ranking of $a$ vs. $b$ (higher or lower) depends only on their relative rankings by all voters.
\item
{\em Unanimity.} The constitution $F$ satisfies {\em Unanimity} if the social outcome ranks $a$ above $b$ whenever all individuals rank $a$ above $b$.
\item
The constitution $F$ is a {\em dictator} on voter $i$, if $F(\sigma) = \sigma(i)$, for all $\sigma$, or $F(\sigma) = -\sigma$, for all $\sigma$, where $-\sigma(i)$ is the ranking $\sigma_k(i) > \sigma_{k-1}(i) \ldots \sigma_2(i) > \sigma_1(i)$ by reversing the ranking $\sigma(i)$.
\end{itemize}

Arrow's theorem states~\cite{Arrow:50,Arrow:63} that:
\begin{Theorem} \label{thm:arrow}
Any constitution on three or more alternatives which satisfies Transitivity, IIA and Unanimity is a dictatorship.
\end{Theorem}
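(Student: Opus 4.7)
The plan is the classical pivotal-voter argument in the style of Geanakoplos. Fix an alternative $b$; I will identify a voter who dictates every pairwise comparison not involving $b$, and then extend by a field-expansion step to cover the remaining pairs.

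The only place transitivity genuinely enters is an \emph{extremal lemma}: if every voter places $b$ at an extreme of her personal ranking (top or bottom), then $F$ must place $b$ at an extreme of the social ranking. Otherwise $F$ would rank $a > b > c$ for some $a,c$; but since every voter's position of $b$ is extreme, each voter's ordering of $\{a,c\}$ is completely unconstrained by $b$'s position. I can therefore flip all voters to rank $c > a$ without changing the relative orderings $\{a,b\}$ or $\{b,c\}$. By IIA, society's comparisons on $\{a,b\}$ and $\{b,c\}$ are unchanged, but unanimity now forces $c > a$ socially, contradicting transitivity.

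Starting from the profile in which every voter ranks $b$ last (where $F$ ranks $b$ last by unanimity), lift $b$ to the top one voter at a time. The extremal lemma keeps $b$ at a social extreme at each intermediate stage, and unanimity at the end puts $b$ on top, so there is a first voter $n^*$ whose switch moves $b$ from the social bottom to the social top. To see that $n^*$ dictates every pair $\{a,c\}$ with $a,c \neq b$, take an arbitrary profile $\tau$ and build a companion profile $\tau'$ that (i) agrees with $\tau$ on every voter's $\{a,c\}$ ordering, (ii) places $b$ at the top for voters $1, \ldots, n^*-1$ and at the bottom for voters $n^*+1, \ldots, n$, and (iii) places $b$ between $a$ and $c$ for voter $n^*$ in accord with her $\{a,c\}$ ordering. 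Matching voter-by-voter $\{a,b\}$ and $\{b,c\}$ orderings against the two extremal configurations flanking $n^*$'s switch, IIA forces $F(\tau')$ to agree with voter $n^*$ on both pairs $\{a,b\}$ and $\{b,c\}$, and transitivity then yields that $F(\tau')$ agrees with $n^*$ on $\{a,c\}$. A final application of IIA transfers this conclusion back to $\tau$.

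To extend the dictatorship to pairs involving $b$, a field-expansion step suffices: knowing $n^*$ dictates some pair $\{a,c\}$ with $a,c \neq b$, construct a profile in which $n^*$ ranks $a > c > b$ while all other voters rank $c$ above both $a$ and $b$. The dictatorship on $\{a,c\}$, unanimity on $\{c,b\}$, transitivity, and IIA combine to force $F$ to agree with $n^*$ on $\{a,b\}$; a symmetric argument handles $\{b,c\}$. Hence $F$ is the dictatorship on voter $n^*$. The main obstacle is the construction of $\tau'$ in Step 3: it must be crafted so that two distinct comparisons can be inherited via IIA from the two extremal configurations flanking $n^*$'s switch, while leaving the $\{a,c\}$ orderings free to match an arbitrary $\tau$. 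Everything else reduces to transitivity-free bookkeeping around IIA and unanimity.
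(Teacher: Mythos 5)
Your proof is correct and is the classical Geanakoplos pivotal-voter argument (extremal lemma, lift $b$ from bottom to top to isolate a pivotal voter $n^*$, construct a companion profile $\tau'$ to show $n^*$ dictates pairs away from $b$, then field-expansion for pairs involving $b$). One small caveat: the field-expansion construction as written only treats the case where $n^*$ ranks $a$ above $b$; the opposite orientation needs a separate (but analogous) profile with $n^*$ ranking $b>c>a$ and every other voter ranking $b$ above $c$, so that unanimity on $\{b,c\}$ and dictatorship on $\{a,c\}$ combine. That said, these are standard adjustments and the argument is sound.

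The route is genuinely different from the paper's. The paper does not include a self-contained proof of Theorem~\ref{thm:arrow}; it cites Arrow and instead proves the stronger characterization Theorem~\ref{thm:general_arrow} in Section~\ref{sec:general_arrow}, from which Theorem~\ref{thm:arrow} follows once one notes that Unanimity forces a single block $A_1 = A$. The paper's engine is Barbera's lemma (Proposition~\ref{prop:32} and its $n$-voter form Theorem~\ref{thm:tech}): if two \emph{distinct} voters are pivotal for two \emph{different} pairwise comparisons, then some profile produces a non-transitive outcome. This is combined with Proposition~\ref{prop:1} (the single-voter classification) and an induction on the number of alternatives, and it deliberately replaces Unanimity by the weaker Non-Degeneracy condition so as to characterize the entire class $\F_k(n)$. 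Barbera's lemma is also the structural backbone of the quantitative results in Sections~\ref{sec:two}--\ref{sec:almost}, where it is upgraded via inverse hypercontractivity to show that two influential voters for different pairs force a non-negligible paradox probability. Your Geanakoplos argument is more direct and self-contained for the statement of Theorem~\ref{thm:arrow} itself, but it uses Unanimity in an essential way (in the extremal lemma and in the field expansion), so it does not by itself deliver the full characterization of IIA-plus-transitive constitutions that the rest of the paper relies on.
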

It is possible to give a characterization of all constitutions satisfying IIA and Transitivity. Results of Wilson~\cite{Wilson:72} provide a partial characterization for the case where voters are allowed to rank some alternatives as equal. In order to obtain a quantitative version of Arrow theorem, we give an explicit and complete characterization of all constitutions satisfying IIA and Transitivity in the case where all voters vote using a strict preference order.
Write $\F_k(n)$ for the set of all constitutions on $k$ alternatives and $n$ voters satisfying IIA and Transitivity.
For the characterization it is useful write $A >_F B$ for the statement that for all $\sigma$ it holds that $F(\sigma)$ ranks all alternatives in $A$ above all alternatives in $B$. We will further write $F_{A}$ for the constitution $F$ restricted to the alternatives in $A$. The IIA condition implies that $F_A$ depends only on the individual rankings of the alternatives in the set $A$. The characterization of $\F_k(n)$ we prove is the following.

\begin{Theorem} \label{thm:general_arrow}
The class $\F_k(n)$ consist exactly of all constitutions $F$ satisfying the following:
There exist a partition of the set of alternatives into disjoint sets $A_1,\ldots,A_r$ such that:
\begin{itemize}
\item
\[
A_1 >_F A_2 >_F \ldots >_F A_r,
\]
\item
For all $A_s$ s.t. $|A_s| \geq 3$, there exists a voter $j$ such that $F_{A_s}$ is a dictator on voter $j$.
\item
For all $A_s$ such that $|A_s| = 2$, the constitution $F_{A_s}$ is an arbitrary non-constant function of the preferences on the alternatives in $A_s$.
\end{itemize}
\end{Theorem}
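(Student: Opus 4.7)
The plan is to establish the nontrivial direction — every $F \in \F_k(n)$ has the claimed block structure — by extracting a partition of the alternatives directly from $F$ and then invoking Arrow's theorem on each block; the converse direction is immediate, since any $F$ built as described clearly satisfies IIA (each pairwise outcome depends only on the appropriate voter preferences) and Transitivity (the inter-block order is fixed, within each large block a single voter's ranking is copied, and $2$-element blocks are trivially transitive).

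The central definition will be an equivalence relation on the alternatives: $a \sim b$ iff $F_{\{a,b\}}$ is non-constant, i.e., neither $a >_F b$ nor $b >_F a$. Symmetry and reflexivity are immediate; the crux is transitivity. I plan to argue by contradiction: suppose $a \sim b$ and $b \sim c$ but $a \not\sim c$, say $a >_F c$. Choose profiles $\sigma^1, \sigma^2$ with $F(\sigma^1)$ ranking $b$ above $a$ and $F(\sigma^2)$ ranking $c$ above $b$, and build a single profile $\sigma$ whose $i$-th coordinate is a linear order on $\{a,b,c\}$ that agrees with $\sigma^1(i)$ on $\{a,b\}$ and with $\sigma^2(i)$ on $\{b,c\}$. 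This combination is always realizable, because the six linear orders on $\{a,b,c\}$ cover all four combinations of $(a\text{ vs.\ }b,\ b\text{ vs.\ }c)$ preferences — individual transitivity only forces $a>b$ together with $b>c$ to imply $a>c$, plus its mirror image. By IIA, $F(\sigma)$ then ranks $b$ above $a$, $c$ above $b$, and $a$ above $c$, a cyclic outcome contradicting Transitivity of $F$. Hence $\sim$ partitions the alternatives into blocks $A_1,\dots,A_r$.

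To order the blocks, note that for distinct $A_s, A_t$ with representatives $a\in A_s, b\in A_t$ we have $a \not\sim b$, so either $a >_F b$ or $b >_F a$, and the choice is independent of the chosen representatives — otherwise $a_1 >_F b$ and $b >_F a_2$ for some $a_1,a_2 \in A_s$ would force $a_1 >_F a_2$ by Transitivity, contradicting $a_1 \sim a_2$ (and symmetrically when varying the representative in $A_t$). The induced tournament on blocks is itself transitive (once more by Transitivity of $F$), so after relabeling we get $A_1 >_F A_2 >_F \cdots >_F A_r$.

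Finally, I need to analyze $F_{A_s}$ within each block. Blocks of size $1$ are vacuous and blocks of size $2$ correspond exactly to non-constant Boolean functions by the definition of $\sim$. For $|A_s| \geq 3$, the restriction $F_{A_s}$ satisfies IIA and Transitivity and additionally has the strong property that \emph{every} pairwise comparison is non-constant. I plan to invoke the classical fact, going back to Arrow and handled without Unanimity by Wilson~\cite{Wilson:72}, that any IIA and Transitive constitution on three or more alternatives with non-constant pairwise outputs must be a dictator or anti-dictator; under the paper's convention (where $F(\sigma) = -\sigma(i)$ counts as a dictator on $i$) this is precisely the required form. The dictating voter $j$ is automatically the same across all triples in $A_s$, since $F_{\{a,b\}}(x) = \pm x_j$ identifies $j$ uniquely. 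The main obstacle is this last dictator step without assuming Unanimity; if one prefers not to quote Wilson, it can be rederived by a pivotal-voter argument in which the cycle-avoidance implication $F_{ab}(\sigma)=F_{bc}(\sigma)=+1 \Rightarrow F_{ac}(\sigma)=+1$ (and its sign-flip) together with the non-constancy of each of $F_{ab}, F_{bc}, F_{ac}$ forces the pivotal voters of the three pairs to coincide, producing the common dictator.
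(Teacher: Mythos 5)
Your proof is correct but takes a genuinely different route from the paper's. The paper proceeds by induction on the number of alternatives~$k$: it removes one alternative~$b$, applies the induction hypothesis to obtain the block structure on the remaining~$k$ alternatives, and then invokes Lemma~\ref{lem:gen_ind} to determine whether~$b$ slots cleanly between two blocks or must merge into one of them, using Lemmas~\ref{lem:gen2} and~\ref{lem:gen3} to extend the dictator property when a block grows. You instead define the block partition directly via the relation $a\sim b \iff F_{\{a,b\}}$ is non-constant and prove transitivity of~$\sim$ by a clean one-shot cycle construction: realizability of every $(a$ vs.~$b, b$ vs.~$c)$ sign pattern by a linear order on $\{a,b,c\}$ plus IIA forces a paradox if $a\sim b\sim c$ but $a >_F c$. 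This is more direct and avoids the inductive overhead; the paper's version has the structural advantage of isolating exactly which lemma is needed when a new alternative is absorbed into an existing block. Both approaches bottom out at the same technical heart, namely Arrow's theorem for three alternatives with a non-degeneracy hypothesis replacing Unanimity (the paper's Theorem~\ref{thm:3c}, proved via Barbera's pivotal-voter argument in Theorem~\ref{thm:tech}). Your invocation of Wilson here is slightly loose --- the paper explicitly observes that Wilson's results give only a partial characterization in the weak-preference setting --- so for a self-contained proof you would want to spell out the pivotal-voter derivation you sketch at the end, which is exactly what the paper does. The observation that all triples in a block $A_s$ with $|A_s|\ge 3$ must share the same dictator, because the binary function $f^{a>b}(x)=\pm x_j$ pins down $j$ uniquely and consecutive triples overlap in a pair, is correct and plays the same role as Lemma~\ref{lem:gen3} in the paper's inductive version.
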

We note that for all $k \geq 3$ all elements of $\F_k(n)$ are not desirable as constitutions. Indeed elements of $F_k(n)$ either have dictators whose vote is followed with respect to some of the alternatives, or they always rank some alternatives on top some other. 
For related discussion see~\cite{Wilson:72}. 

The main goal of the current paper is to provide a quantitative version of Theorem~\ref{thm:general_arrow} assuming voters vote
independently and uniformly at random. Note that Theorem~\ref{thm:general_arrow} 
above implies that if $F \not\in \F_k(n)$ then $P(F) \geq (k!)^{-n}$.
However if $n$ is large and the probability of a non-transitive outcome is indeed as small as $(k!)^{-n}$, one may argue that a non-transitive outcome is so unlikely that in practice Arrow's theorem does not hold.

The goal of the current paper is to establish lower bounds on the probability of paradox in terms that do not depend on $n$. Instead
our results are stated in terms of the statistical distance between $F$ and the closet element in $\F_k(n)$. Thus our result establishes that the only way to avoid
non-transitivity is by being close to the family $\F_k(n)$.

In the following subsections we introduce the probabilistic setup, state our main result, discuss related work and give an outline of the proof.

\subsection{Notation and Quantitative Setup}
We will assume voters vote independently and uniformly at random so each voter chooses one of the $k!$ possible rankings with equal probability. We will write $\P$ for the underlying probability measure
and $\E$ for the corresponding expected value.
In this probabilistic setup, it is natural to measure transitivity as well as how close are two different constitutions.

\begin{itemize}
\item
Given two constitutions $F,G$ on $n$ voters, we denote the statistical distance between $F$ and $G$ by $D(F,G)$, so that:
\[
D(F,G) = \P[F(\sigma) \neq G(\sigma)].
\]
\item
Given a constitution $F$, we write $T(F)$ for the probability that the outcome of $F$ is transitive and $P(F)$ for the probability
that the outcome of $F$ is non-transitive so ($P$ stands for paradox):
\[
T(F) = \P[F(\sigma) \mbox{ is transitive}], \quad P(F) = 1 - T(F).
\]
\end{itemize}

\subsection{Main Result}
In our main result we show that
\begin{Theorem} \label{thm:main}
For every number of alternatives $k \geq 1$ and $\eps > 0$, there exists a $\delta = \delta(\eps)$, such that for every $n \geq 1$, if $F$ is a
constitution on $n$ voters and $k$ alternatives satisfying:
\begin{itemize}
\item
IIA and
\item
$P(F) < \delta$,
\end{itemize}
then there exists $G \in \F_k(n)$ satisfying $D(F,G) < k^2 \eps$.
Moreover, one may take:
\begin{equation} \label{eq:del_main}
\delta = \exp \left(-\frac{C}{\eps^{21}} \right),
\end{equation}
for some absolute constant $0 < C < \infty$.
\end{Theorem}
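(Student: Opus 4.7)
The plan is to reduce the general $k$-alternative statement to the case $k=3$ and then handle $k=3$ by splitting into a high-influence regime (where $F$ is forced to be close to a dictator) and a low-influence regime (where an invariance argument gives a non-negligible paradox probability).

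First, I would reduce to $k = 3$. Because $F$ is non-transitive on some profile iff some triple $\{a,b,c\}$ of alternatives produces a cyclic pairwise outcome, and because IIA means the restriction $F_{\{a,b,c\}}$ depends only on the voters' rankings of $\{a,b,c\}$, each triple restriction $F_{\{a,b,c\}}$ is itself an IIA constitution on $3$ alternatives with $P(F_{\{a,b,c\}}) \le P(F) < \delta$. Applying the $k=3$ case with parameter $\eps$ would give, for each triple, an element $G_{\{a,b,c\}} \in \F_3(n)$ with $D(F_{\{a,b,c\}}, G_{\{a,b,c\}}) < \eps$. Since $G_{\{a,b,c\}}$ is either a dictator-on-some-voter for that triple, or constantly ranks one of $\{a,b,c\}$ above the other two, one then stitches these descriptions together (using that two triples sharing a pair must agree on the pairwise function up to $\eps$-distance) to produce a single partition $A_1 >_F \cdots >_F A_r$ and a dictator per block of size $\ge 3$; the $k^2 \eps$ in the statement reflects the union bound over the $\binom{k}{2}$ pairs of alternatives.

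For $k = 3$, I would encode the constitution as three Boolean functions $f,g,h \colon \{-1,1\}^n \to \{-1,1\}$ representing the social pairwise preferences on $(a,b)$, $(b,c)$ and $(c,a)$. Each voter's three bits are correlated: they are uniform on the six transitive outcomes, so the product distribution excludes $(+,+,+)$ and $(-,-,-)$ locally. A global non-transitive outcome occurs exactly when $(f,g,h) \in \{(+,+,+),(-,-,-)\}$, giving
\[
P(F) \;=\; \P[f=g=h=+1]\,+\,\P[f=g=h=-1].
\]
The next step is to control the voter influences $I_i(f), I_i(g), I_i(h)$. The core claim, and the source of the novelty flagged in the abstract, is that if $P(F) < \delta$ then there is a single voter $j$ such that only $j$ has non-negligible influence across the three functions $f,g,h$; more precisely, for any two distinct voters $i \ne j$ and any two distinct pairwise functions, their influences cannot both exceed $\eps$. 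This is proved by the inverse-hypercontractivity route: writing the paradox probability as a correlation between the events that $f,g,h$ lie in certain half-spaces and lower-bounding that correlation via reverse hypercontractive inequalities (Borell / Mossel--Oleszkiewicz--Sen) using that $I_i(f)$ large means $f$ is genuinely sensitive to coordinate $i$, while $I_j(g)$ large forces $g$ to correlate in a way that, combined with the joint distribution of the voter bits, forces a cycle with probability $\gtrsim \eps^{O(1)}$.

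Once influences are concentrated on a single voter $j$ (the high-influence regime) an FKN/junta-stability argument shows $F_{\{a,b,c\}}$ is $O(\eps)$-close to a dictator on $j$ or to a constitution that ranks one alternative always on top/bottom. In the low-influence regime (all $I_i(f), I_i(g), I_i(h)$ small for every voter) I would apply the non-linear invariance principle to replace the correlated Boolean inputs by correlated Gaussians with matching covariance structure, reducing $P(F)$ to a Gaussian quadrant probability for three half-spaces with pairwise covariances $-1/3$ (the value arising from the uniform distribution on permutations). A direct Gaussian computation then yields an absolute positive lower bound on $P(F)$ unless each of $f,g,h$ is essentially constant, which again puts $F$ into $\F_3(n)$. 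The main obstacle is the inverse-hypercontractivity step, because it must both rule out \emph{two} distinct voters having influence across \emph{different} pairs and provide a quantitative bound strong enough to combine with the invariance principle's error terms; tracking these two sources of loss is what produces the final $\delta = \exp(-C/\eps^{21})$ dependence.
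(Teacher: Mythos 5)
Your high-level architecture (reduce to $k=3$; rule out two distinct influential voters for two distinct pairwise functions via reverse hypercontractivity; handle the rest via invariance and a dictator-detection step) matches the paper, but two steps do not go through as you have written them.

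\textbf{The core claim is stated too strongly and leaves a hole in the case analysis.} The reverse-hypercontractivity step shows only that no two \emph{distinct} voters $i\neq j$ can both have influence $> \eps$ on two \emph{distinct} pairwise functions (this is exactly Theorem~\ref{thm:two_inf}, via Barbera's pivot lemma and Lemma~\ref{lem:inv_hyp}). It does \emph{not} imply that ``there is a single voter $j$ such that only $j$ has non-negligible influence across $f,g,h$.'' In particular it is entirely consistent with, say, $f^{a>b}$ having influence $>\eps$ at voters $1$ and $2$ while $f^{b>c}$ and $f^{c>a}$ have all influences tiny. Your two remaining cases (``influences concentrated on a single voter'' and ``all influences small'') do not cover this configuration. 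The paper covers it with a genuinely intermediate case (case~(\ref{eq:case2}) in Theorem~\ref{thm:3}): for each coordinate $j$, at most one of the three functions has large influence at $j$. This \emph{cross-influence} condition is strictly weaker than ``all influences small,'' and handling it requires the strong form of Majority is Stablest used in Theorem~\ref{thm:arrow_low_cross_unif} (and, for symmetric distributions, the averaging Lemma~\ref{lem:average}), not a plain all-low-influence invariance step.

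\textbf{The FKN/junta-stability argument for the one-influential-voter case does not work.} If all coordinates other than $j$ have influence bounded by a constant $\tau$ (as in case~(\ref{eq:case3})), the function need not be close to a $1$-junta: Majority on coordinates $\{2,\dots,n\}$ has all influences $\Theta(n^{-1/2})<\tau$ once $n$ is large, yet it is $1/2$-far from every dictator and every constant. A junta-stability/FKN bound only kicks in when influences are $O(\eps/n)$ (which is what Lemma~\ref{lem:inf_sum} exploits in the $\eps=O(n^{-1})$ regime of Section~\ref{sec:almost}), not at a constant threshold. The paper instead conditions on voter $j$'s ranking (six possibilities), applies the low cross-influence Arrow theorem to each restriction, and then stitches the resulting near-constants into a member of $\F_3(n)$ (Theorem~\ref{thm:arrow_one_inf}). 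Replacing that conditioning argument with FKN is a genuine gap, not a cosmetic difference.

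Finally, a small note on the $k\geq 3$ reduction: the paper does not literally ``stitch'' the triple approximations together; it defines $g^{a>b}$ to be the unique dictator-or-constant within small distance of $f^{a>b}$ (if one exists, else $g^{a>b}=f^{a>b}$), and then verifies $G_A\in\F_3(n)$ for every triple $A$ by comparing to the triple-wise approximant $H_A$. Your ``union bound over $\binom{k}{2}$ pairs'' correctly explains the $k^2\eps$ factor, but you would still need the uniqueness-of-nearby-dictator observation to make the pairwise choices consistent across triples.
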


We therefore obtain the following result stated at the abstract:
\begin{Corollary} \label{cor:main}
For any number of alternatives $k \geq 3$ and $\eps > 0$,
there exists a $\delta = \delta(\eps)$, such that for every $n$, if $F$ is a
constitution on $n$ voters and $k$ alternatives satisfying:
\begin{itemize}
\item
IIA and
\item
$F$ is $k^2 \eps$ far from any dictator, so $D(F,G) > k^2 \eps$ for any dictator $G$,
\item
For every pair of alternatives $a$ and $b$, the probability that $F$ ranks $a$ above $b$ is at least $k^2 \eps$,
\end{itemize}
then the probability of a non-transitive outcome, $P(F)$, is at least $\delta$, where $\delta(\eps)$
may be taken as in~(\ref{eq:del_main}).
\end{Corollary}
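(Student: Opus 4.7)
The plan is to derive the corollary directly from Theorem~\ref{thm:main} together with the structural Theorem~\ref{thm:general_arrow}. I will argue by contrapositive: I assume $P(F) < \delta$ with $\delta$ given by~\eqref{eq:del_main} and IIA in force, then use Theorem~\ref{thm:main} to produce some $G \in \F_k(n)$ with $D(F,G) < k^2\eps$, and then show that $G$ must violate at least one of the two remaining hypotheses on $F$.

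To exploit the $G$ produced by Theorem~\ref{thm:main}, I invoke Theorem~\ref{thm:general_arrow} to decompose $G$: there is a partition $A_1 >_G A_2 >_G \cdots >_G A_r$ of the alternatives such that on every block of size at least $3$ the restricted constitution is a dictator, while blocks of size $2$ carry an arbitrary non-constant rule. From here I split into two cases, based on whether the partition is trivial.

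In the case $r \geq 2$, I pick any $a \in A_1$ and $b \in A_2$. Since $G$ deterministically ranks $a$ above $b$ on every profile, the event that $F$ ranks $b$ above $a$ is contained in the event $\{F \ne G\}$, so
\[
\P[F \text{ ranks } b \text{ above } a] \leq D(F,G) < k^2\eps,
\]
contradicting the hypothesis that every pairwise comparison has probability at least $k^2\eps$. In the case $r = 1$, the single block $A_1$ has size $k \geq 3$, so Theorem~\ref{thm:general_arrow} forces $G$ itself to be a dictator; then $D(F,G) < k^2\eps$ directly contradicts the hypothesis that $F$ is $k^2\eps$-far from every dictator. Either way we reach a contradiction, completing the proof.

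The only substantive content is Theorem~\ref{thm:main}; the corollary is a two-line structural unpacking. So there is no real obstacle here beyond making sure the partition dichotomy covers all elements of $\F_k(n)$ and that the strict inequality in the distance-to-dictator hypothesis lines up with the strict inequality $D(F,G) < k^2\eps$ delivered by Theorem~\ref{thm:main}.
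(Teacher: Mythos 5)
Your proof is correct and takes essentially the same route as the paper: assume $P(F) < \delta$, apply Theorem~\ref{thm:main} to get $G \in \F_k(n)$ with $D(F,G) < k^2\eps$, use the pairwise-probability hypothesis together with Theorem~\ref{thm:general_arrow} to force $G$ to be a dictator, and then contradict the distance-to-dictator hypothesis. The paper phrases the intermediate step as "for every pair $a,b$, $\P[G \text{ ranks } a \text{ above } b] > 0$, hence $G$ is a dictator," whereas you make the partition $A_1 >_G \cdots >_G A_r$ explicit and split on $r=1$ versus $r \geq 2$; these are the same argument, yours just unpacks why the positive-probability condition forces $r=1$.
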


\begin{proof}
Assume by contradiction that $P(F) < \delta$. Then  by Theorem~\ref{thm:main} there exists a function $G \in \F_{n,k}$
satisfying $D(F,G) < k^2 \eps$.
Note that for every pair of alternatives $a$ and $b$ it holds that:
\[
\P[G \mbox{ ranks } a \mbox{ above } b] \geq \P[F \mbox{ ranks } a \mbox{ above } b] - D(F,G) > 0.
\]
Therefore for every pair of alternatives there is a positive probability that $G$ ranks $a$ above $b$.
Thus by Theorem~\ref{thm:general_arrow} it follows that $G$ is a dictator which is a contradiction.
\end{proof}

\begin{Remark}
Note that if $G \in \F_k(n)$ and
$F$ is any constitution satisfying $D(F,G) < k^2 \eps$ then $P(F) < k^2 \eps$.
\end{Remark}

\begin{Remark}
The bounds stated in Theorem~\ref{thm:main} and Corollary~\ref{cor:main} in terms of $k$ and $\eps$ is clearly not an optimal one.
We expect that the true dependency has $\delta$ which is some fixed power of $\eps$. Moreover we expect that the bound $D(F,G) < k^2 \eps$ should be improved
to $D(F,G) < \eps$.
\end{Remark}

\subsection{Generalizations and Small Paradox Probability}
Theorem~\ref{thm:main} and Corollary~\ref{cor:main} extend to more general product distributions.
We call a distribution $\mu$ over the permutations of $k$ elements $S(k)$, {\em symmetric} if
$\mu(-\sigma) = \mu(\sigma)$ for all $\sigma \in S(k)$. We will write $\alpha = \alpha(\mu)$ for $\min (\mu(\sigma) : \sigma \in S(k))$. We will write $\P$ and $\E$ for the probability and expected value
according to the product measure $\mu^n$.

\begin{Theorem} \label{thm:gen}
Theorem~\ref{thm:main} and Corollary~\ref{cor:main} extend to the following setup where voters vote independently at random according to a symmetric distribution $\mu$ over the permutations of $k$ elements.
In this setup it suffices to take
\begin{equation} \label{eq:del_gen}
\delta = \exp \left(-\frac{C_1}{\alpha \eps^{C_2(\alpha)}} \right),
\end{equation}

where $0 < C_1(\alpha),C_2(\alpha) < \infty$. In particular one may take
$C_2(\alpha) = 3+1/(2 \alpha^2)$.
\end{Theorem}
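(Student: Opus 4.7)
The plan is to run the proof of Theorem~\ref{thm:main} essentially verbatim, replacing each probabilistic estimate that relied on the uniform distribution on $S(k)$ with its analogue for the symmetric product measure $\mu^n$. The symmetry hypothesis $\mu(-\sigma)=\mu(\sigma)$ plays one crucial role: it guarantees that for every voter $i$ and every unordered pair $\{a,b\}$, the event ``$\sigma(i)$ ranks $a$ above $b$'' has probability exactly $\tfrac12$. Consequently, each pair-restriction $F_{\{a,b\}}$ is a balanced Boolean function on $\{-1,+1\}^n$ under the induced marginal, so the Fourier-analytic notions of influence, noise operator, and low-degree projection remain available exactly as in the uniform case. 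The only change is that the underlying measure on each voter's coordinate is $(S(k),\mu)$ rather than uniform on $S(k)$, and the $\binom{k}{2}$ pair indicators attached to a single voter are correlated in a $\mu$-dependent way; this is irrelevant to the estimates below since each is applied either one pair at a time or to the full voter state.

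The first step is the ``two-pivot'' estimate: if $P(F)<\delta$ then for any two distinct pairs of alternatives $\{a,b\}\neq\{c,d\}$, the probability that two different voters $i\neq j$ are simultaneously pivotal for $F_{\{a,b\}}$ and $F_{\{c,d\}}$ is $o(1)$. In the uniform case this is derived by observing that, conditioned on such a double-pivot event, a specific paradox is forced with constant probability, and then inverting this using reverse (inverse) hypercontractivity. For a general symmetric $\mu^n$, one invokes the reverse hypercontractive inequality on the product space $(S(k),\mu)^n$; the best constant degrades like $\alpha^{-O(1)}$, and a short computation shows the resulting bound introduces a factor $\delta^{O(1/\alpha^2)}$. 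This is the origin of the $\tfrac{1}{2\alpha^2}$ term in the exponent $C_2(\alpha)=3+\tfrac{1}{2\alpha^2}$.

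Once the two-pivot estimate is in place, either some pair-function $F_{\{a,b\}}$ is $\eps$-close to a dictator (contributing the structural component of the decomposition in $\F_k(n)$), or all voters have small influence on every pair-function. In the low-influence regime, apply the non-linear invariance principle of Mossel--O'Donnell--Oleszkiewicz to the three pair-functions associated with any alternative triple $\{a,b,c\}$; since each single-coordinate marginal is $\mu$ with atoms bounded below by $\alpha$, the principle applies with quantitative constants depending only on $\alpha$. The resulting Gaussian comparison reduces the paradox probability to the probability that three correlated Gaussian half-spaces produce a cyclic outcome, which is bounded below by an explicit positive function of $\eps$ given the lower bound on each pair's marginal and the far-from-dictator hypothesis. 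Combining this Gaussian lower bound with Step~1 and the $\eps$-far-from-$\F_k(n)$ hypothesis yields $P(F)\geq\delta$ with $\delta$ as in~\eqref{eq:del_gen}.

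The main obstacle is tracking the $\alpha$-dependence in Step~1. Reverse hypercontractivity becomes degenerate as $\alpha\to 0$ and its constant enters multiplicatively in the exponent, so the conversion of ``small paradox probability'' into ``small two-pivot probability'' must be carried out carefully to obtain the stated $\delta=\exp(-C_1/(\alpha\,\eps^{C_2(\alpha)}))$ with $C_2(\alpha)=3+1/(2\alpha^2)$. Step~2 is comparatively routine: once set up correctly, the invariance principle contributes only the fixed ``$3$'' in $C_2(\alpha)$ via the usual polynomial loss between low-degree truncation error and the smoothness of the indicator of a Gaussian half-space.
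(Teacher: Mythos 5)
The overall architecture you describe (two-pivot / reverse hypercontractivity, then invariance for the low-influence regime, then patch the one-influential-voter case) does match the paper's plan, and you correctly identify that symmetry forces each pairwise marginal to be balanced and that the $\alpha$-dependence degrades the reverse-hypercontractive constants. However, there are two genuine gaps.

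First, you treat the low-influence step as ``comparatively routine,'' asserting that one simply applies the Mossel--O'Donnell--Oleszkiewicz invariance principle to the three pair-functions as in the uniform case. This is precisely where the symmetric case is \emph{not} routine, and the paper flags it explicitly. In the uniform case the three pairwise correlations are all $-1/3 < 0$, so Kalai's formula $P(F)=\frac14(1+\sum \E[f_i f_{i+1}])$ is a sum of terms each of which is \emph{minimized} (asymptotically) by symmetric threshold functions, and one can apply Majority-is-Stablest one pair at a time to get a one-sided lower bound. For a general symmetric $\mu$ some of the correlations $\rho_{ij}$ may be positive; a positive-$\rho$ term in Kalai's formula is \emph{maximized}, not minimized, by threshold functions, so the pair-by-pair one-sided application of MIST simply breaks. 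The paper gets around this by constructing a single coupling map $\Psi : f \mapsto \tilde f$ from Boolean to Gaussian functions (Theorem~\ref{thm:invariance}) that makes each $\E[f_i f_j]$ close to $\E[\tilde f_i \tilde f_j]$ with a \emph{two-sided} error, for all pairs simultaneously, so Kalai's formula can be compared wholesale rather than termwise. Your proposal has no mechanism to deal with positive correlations and would fail at exactly this step.

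Second, you attribute the $1/(2\alpha^2)$ in $C_2(\alpha)=3+1/(2\alpha^2)$ to the two-pivot (reverse hypercontractivity) step. In the paper the two-pivot bound only contributes an exponent of order $1/(2\alpha)$ (via the correlation bound $|\E[x^{a>b}(i)x^{b>c}(i)]|\le 1-4\alpha$ plugged into $\eps^{2/(1-\rho)}$). The $1/(2\alpha^2)$ enters through the Gaussian Arrow theorem for symmetric distributions (Theorem~\ref{thm:arrow_gauss_sym}), where inverse hypercontractivity is applied \emph{twice} in succession --- once with correlation bound $1-4\alpha$ and once with $\sqrt{1-4\alpha}$ --- compounding the exponent. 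So both the shape of the final constant and the part of the proof that produces it are misattributed in your plan, which matters because it determines where the $\alpha$-dependence must be tracked most carefully.
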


The dependency of $\delta$ on $\eps$ in~(\ref{eq:del_main}) and~(\ref{eq:del_gen}) is a bad one. For values of $\eps < O(n^{-1})$
it is possible
to obtain better dependency, where $\delta$ is polynomial in $\eps$. In Section~\ref{sec:almost} we prove the following.

\begin{Theorem} \label{thm:ksmall}
Consider voting on $k$ alternatives where voters vote uniformly at random from $S_k^n$.
Let
\begin{equation} \label{eq:eps_bd_ksmall}
\frac{1}{324} > \eps > 0.
\end{equation}
For every $n$, if $F$ is a
constitution on $n$ voters satisfying:
\begin{itemize}
\item
IIA and
\item
\begin{equation} \label{eq:unif_small}
P(F) < \frac{1}{36} \eps^3 n^{-3},
\end{equation}
\end{itemize}
then there exists $G \in \F_3(n)$ satisfying $D(F,G) \leq 10 k^2 \eps$.
If each voter follows a symmetric voting distribution then with minimal probability $\alpha$ then the same statement holds where~(\ref{eq:eps_bd_ksmall}) is replaced with $\alpha^2/9 > \eps > 0$ and~(\ref{eq:unif_small}) is replaced with
\[
P(F) < \alpha^2 \eps^{-\frac{1}{2 \alpha}} n^{-3}.
\]
\end{Theorem}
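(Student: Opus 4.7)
The plan is to focus first on the $k=3$ case; the general $k$ statement follows by a union bound over the $\binom{k}{3}$ triples of alternatives, paying a factor of $O(k^3)$ which is absorbed into the stated $10 k^2 \eps$ slack. For $k=3$, IIA lets me represent $F$ by three Boolean functions $f_P : \{\pm 1\}^n \to \{\pm 1\}$, one per ordered pair $P \in \{ab, bc, ac\}$, and a paradox is exactly the event that the triple $(f_{ab}, f_{bc}, f_{ac})$ realizes one of the two Condorcet-cycle patterns. My target is to produce an element $G \in \F_3(n)$ (as given by the explicit characterization in Theorem~\ref{thm:general_arrow}) within Hamming distance $O(\eps)$ of $F$.

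The heart of the argument is a \emph{local} lower bound of the form
\[
P(F) \;\geq\; \frac{c}{n}\, \Pr[i \text{ pivotal for } f_P] \cdot \Pr[j \text{ pivotal for } f_Q]
\]
for every pair of distinct voters $i\neq j$ and every pair of distinct alternative-pairs $P\neq Q$. The intuition is that if voter $i$ can flip $f_P$ and voter $j$ can flip $f_Q$ with non-negligible probability, then, because voters are independent, one of the four combinations of their bits gives the Condorcet pattern on a non-negligible slab of profiles. Combined with the hypothesis $P(F) < \tfrac{1}{36}\eps^3 n^{-3}$, summing over $(i,j)$ and $(P,Q)$ forces the existence of a single voter $i^\ast$ such that the total non-$i^\ast$ pairwise influence on each $f_P$ is at most $\eps/n$.

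Once $i^\ast$ is identified, a standard junta/FKN-type observation says that a Boolean function whose influences are all concentrated on a single coordinate must be $O(\eps)$-close in Hamming distance either to $\pm x_{i^\ast}$ or to a constant. Applying this separately to $f_{ab}, f_{bc}, f_{ac}$ and replacing each by its nearest dictator/constant yields a candidate $G$; transitivity of $G$, plus the partition structure required by Theorem~\ref{thm:general_arrow}, comes for free because whenever two of the three functions collapse to the same dictator $\pm x_{i^\ast}$ the third is forced (else a Condorcet cycle occurs with probability at least $1/6$, contradicting the hypothesis). Adding the three pairwise Hamming errors and multiplying by $3$ to union-bound over pairs gives $D(F,G) \leq 10 \eps$ for $k=3$ and $10 k^2 \eps$ in general.

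The main obstacle I expect is the local lower bound itself, because under the uniform distribution on $S_3^n$ the three pairwise bits at a single voter are correlated, so flipping $x_i$ while leaving $y_i, z_i$ fixed is not a legal move. I would circumvent this by conditioning on voter $i$'s ranking being one of the two that toggles only the $P$-preference (each such ranking has probability $1/6$), and symmetrically for voter $j$, which is precisely where the factor of $1/n$ in the local bound is paid by the $n^{-3}$ slack in the hypothesis. For the symmetric-$\mu$ extension, the combinatorial flipping argument breaks down because rare rankings have mass $\alpha$, and the cleanest way to recover a bound of the form $P(F) \geq \alpha^{-O(1/\alpha)}\cdot(\text{product of influences})/n$ is via inverse hypercontractivity on pairs of correlated spaces, in the spirit of the ``main novel technical ingredient'' flagged in the abstract; plugging that bound into the same junta-and-partition framework gives the stated exponent $3+1/(2\alpha)$.
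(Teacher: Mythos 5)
Your ``local lower bound'' $P(F) \geq \frac{c}{n}\, I_i(f_P)\, I_j(f_Q)$ is not what the argument actually yields and, more importantly, the conclusion you draw from it is false. Inverse hypercontractivity applied to the two correlated hypercubes $x^{a>b}$ and $x^{b>c}$ (correlation $1/3$) gives $\P[\text{$i$ pivotal}\ \wedge\ \text{$j$ pivotal}] \geq \eps^{3}$ whenever both influences exceed $\eps$, i.e.\ a bound that is \emph{cubic} in the smaller influence, not a product of the two. The heuristic that ``one of the four bit-combinations gives the Condorcet pattern on a non-negligible slab'' does not give a product form, because the two pivotality events live on correlated (not independent) copies of $\{-1,1\}^{n-2}$; you need inverse hypercontractivity precisely to lower-bound the intersection of these correlated events.

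The fatal gap is the deduction that $P(F) < \frac{1}{36}\eps^3 n^{-3}$ ``forces the existence of a single voter $i^\ast$'' on which all three functions' influences concentrate. This is simply not true. Take $f^{a>b} = f^{b>c} = -1$ identically and $f^{c>a} = \mathrm{Maj}_n$: then $b$ is always ranked last, so $P(F) = 0$, yet $f^{c>a}$ has $\Theta(n^{-1/2})$ influence at \emph{every} coordinate and there is no junta coordinate. Your subsequent FKN step would wrongly try to replace $f^{c>a}$ by a dictator or a constant, and the resulting $G$ would be $\Omega(1)$-far from $F$. The correct structure of $\F_3(n)$ allows exactly this example: when one alternative is always at the top or bottom, the remaining pairwise comparison is \emph{arbitrary}. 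The paper's proof therefore runs a three-way case split: (I) two different functions have large influence on two different voters (killed by the inverse-hypercontractive two-pivots bound); (II) all large influences sit on one voter (your junta/FKN case); and (III) two of the three functions have uniformly small influences and are hence $O(\eps)$-close to constants. Case III further splits: if the two constants are opposite one gets a degenerate element of $\F_3(n)$ with the third function retained as-is; if they are the same sign, transitivity plus the small-paradox hypothesis forces the third function to also be $O(\eps)$-close to a constant. Your plan has no mechanism to produce Case III, which is why you never construct the degenerate members of $\F_3(n)$ with an arbitrary middle comparison.

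Two smaller points. First, a naive union bound over $\binom{k}{3}$ triples gives a factor $O(k^3)$, not the stated $10k^2$; the paper instead replaces each of the $\binom{k}{2}$ pairwise functions $f^{a>b}$ by the unique nearby dictator/constant (at most one such candidate can be within Hamming distance $1/2$), obtaining the $\binom{k}{2}$ factor, and then \emph{verifies} transitivity of the assembled $G$ by checking each triple. Second, for the symmetric-$\mu$ extension, the stated exponent is $\frac{1}{2\alpha}$ rather than ``$3 + 1/(2\alpha)$'' as you write, and it arises from the correlation bound $|\E[x^{a>b}(i)x^{b>c}(i)]| \leq 1 - 4\alpha$ plugged into the same inverse-hypercontractivity lemma; your inverse-hypercontractive instinct is the right one there, but it needs to be attached to the case-analysis skeleton rather than to a single ``find $i^\ast$'' step.
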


\subsection{Related Work}
The first attempt at getting a quantitative version of Arrow's theorem is Theorem 1.2 in a beautiful paper by Kalai~\cite{Kalai:02} which we state in our notation as follows.
\begin{Theorem} \label{thm:kalai}
There exists a $K > 0$ such that the following holds:
Consider voting on $k=3$ alternatives where voters vote uniformly at random from $S_3^n$.
Assume $F$ is a balanced constitution, i.e., for every pair $a,b,$ of alternatives, it holds that the probability that $F$ ranks $a$ above $b$ is exactly $1/2$.
Then if $P(F) < \eps$, then $D(F,G) < K \eps$ for some dictator $G$.
\end{Theorem}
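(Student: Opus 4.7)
The plan is to use Fourier analysis on the Boolean hypercube. By IIA the social outcome on each of the three pairs of alternatives depends only on the voters' preferences on that pair; encoding voter $i$'s pairwise preferences as $(x_i,y_i,z_i) \in \{-1,+1\}^3$ for $(a,b)$, $(b,c)$, $(a,c)$, uniform over the six transitive triples, we obtain three Boolean social functions $f(x), g(y), h(z) \in \{-1,+1\}$. The balanced hypothesis reads $\E f = \E g = \E h = 0$.

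\textbf{Step 1 (Kalai identity).} Expanding the indicators of the two cyclic triples produces
\begin{equation}
P(F) = \tfrac{1}{4}\bigl(1 + \E[fg] - \E[fh] - \E[gh]\bigr).
\end{equation}
The single-voter marginals satisfy $\E[x_i y_i] = -1/3$ and $\E[x_i z_i] = \E[y_i z_i] = 1/3$, and distinct voters are independent, so Plancherel gives $\E[fg] = \sum_S (-1/3)^{|S|}\hat f(S)\hat g(S)$, together with the analogous identities for the other two correlations. Substituting, $1-4P(F)$ becomes a signed, level-graded sum of bilinear Fourier expressions in $(\hat f,\hat g,\hat h)$ with geometrically decaying weights $3^{-|S|}$.

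\textbf{Step 2 (level-$1$ concentration and FKN).} The structural core of the argument is that, by completing the square level by level, $1-4P(F)$ admits an upper bound of the form ``level-$1$ contribution plus a nonnegative multiple of $(W^{\ge 2}[f]+W^{\ge 2}[g]+W^{\ge 2}[h])/3$'', while the level-$1$ contribution itself is at most $1$, with equality only when $f,g,h$ are a common $\pm$ dictator. Hence $P(F) \le \eps$ forces $W^{\ge 2}[f], W^{\ge 2}[g], W^{\ge 2}[h] = O(\eps)$, i.e., each of $f,g,h$ has $1-O(\eps)$ of its Fourier mass on singletons. The Friedgut--Kalai--Naor theorem then yields that each of $f,g,h$ is $O(\eps)$-close in statistical distance to some $\pm$ dictator (balance excludes the constant case), and the level-$1$ alignment encoded in the same bound forces these three dictators to share a common voter index with signs that form a transitive triple. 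Therefore $F$ is $O(\eps)$-close to a single dictator $G$, as claimed.

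The main obstacle is Step 2: converting the signed Fourier identity of Step 1 into a manifestly nonnegative upper bound on $1-4P(F)$ whose only equality cases are common dictators requires careful handling of the mixed signs $\pm 1/3$, the combinatorics of the two cyclic triples, and the balanced hypothesis. Once that identity is in place, invoking FKN on each function separately and matching the three dictators is routine.
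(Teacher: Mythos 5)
This is Theorem~1.2 of Kalai, which the paper \emph{cites} but does not prove; there is no ``paper's own proof'' to compare against. Your proposal reconstructs the approach of Kalai's original paper~\cite{Kalai:02} (which the present paper explicitly describes as ``direct manipulation of the Fourier expression for the probability of a paradox''), namely: the Kalai formula for $P(F)$, expansion via Plancherel with the $\pm 1/3$ single-voter correlations, level-$1$ concentration, and FKN. The setup, correlations, and roadmap are all correct.

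The one place your write-up is imprecise enough to be misleading is the verbal description of Step~2. As literally stated --- ``$1-4P(F)$ is at most (level-$1$ contribution) plus a nonnegative multiple of $W^{\ge 2}$, and the level-$1$ contribution is at most $1$'' --- the bound is vacuous: it only gives $1-4P(F)\le 1 + cW^{\ge 2}$, which is always true. What actually makes the argument work is the following numerology, which your ``complete the square level by level'' remark is gesturing at. Write the level-$1$ part as $\tfrac13\sum_i(\hat f(i)\hat g(i)+\hat g(i)\hat h(i)+\hat f(i)\hat h(i))$ and use $ab+bc+ca\le a^2+b^2+c^2$ to get that it is at most $\tfrac13\bigl(W^1[f]+W^1[g]+W^1[h]\bigr)$. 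The crucial point, which your sketch omits, is that balance gives $W^1=1-W^{\ge 2}$ (no level-$0$ mass), so this is $1-\tfrac13\bigl(W^{\ge 2}[f]+W^{\ge 2}[g]+W^{\ge 2}[h]\bigr)$; meanwhile the level-$\ge 2$ contribution is bounded in absolute value by $\tfrac19\bigl(W^{\ge 2}[f]+W^{\ge 2}[g]+W^{\ge 2}[h]\bigr)$ via $|ab|\le(a^2+b^2)/2$ and $(1/3)^{|S|}\le 1/9$. Since $\tfrac13>\tfrac19$, the net effect is $1-4P(F)\le 1-\tfrac29\bigl(W^{\ge 2}[f]+W^{\ge 2}[g]+W^{\ge 2}[h]\bigr)$, whence $W^{\ge 2}\le 18\eps$ for each function. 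The completed-square deficit $\tfrac16\sum_i[(\hat f(i)-\hat g(i))^2+\cdots]$ is simultaneously forced to be $O(\eps)$, which is exactly what aligns the three FKN dictators to a common voter with a common sign. With these corrections made explicit, the proof is sound and matches Kalai's argument.
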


Comparing Kalai's result to Theorem~\ref{thm:main} we see that
\begin{itemize}
\item Kalai obtains better dependency of $\delta$ in terms of $\eps$.
\item Kalai's result holds only for $k=3$ alternatives, while ours hold for any number of alternatives.
\item Kalai's result holds only when $F$ is {\em balanced} while ours hold for all $F$.
\end{itemize}
The approach of~\cite{Kalai:02} is based on "direct" manipulation of the Fourier expression for probability of paradox. A number of unsuccessful attempts (including by the author of the current paper) have been made to extend this approach to a more general setup without assuming balance of the functions and to larger number of alternatives.

A second result of~\cite{Kalai:02} proves that for balanced functions which are transitive the probability of a paradox is bounded away from zero. Transitivity is a strong assumption roughly meaning that all voters have the same power. We do not assume transitivity in the current paper.
A related result~\cite{MoOdOl:05,MoOdOl:09} proved a conjecture of Kalai showing that among all balanced low influence functions, majority minimizes the probability of a paradox.
The low influence condition is weaker than transitivity,but still requires that no single voter has strong influence on the outcome of the vote.



 Keller~\cite{Keller:09} extended some of Kalai's result to symmetric distributions (still under the balance assumption). Keller~\cite{Keller:09} also provides lower bounds on the probability
  of a paradox in the case the functions are monotone and balanced.

 We want to note of some natural limitation to the approach taken in~\cite{Kalai:02} and~\cite{Keller:09} which is based on "direct" analysis of the probability of a paradox in terms of the Fourier expansion. First, this approach does not provide a proof of Arrow theorem nor does it ever use it (while our approach does). Second, it is easy to see that one can get small paradox probability by looking at constitutions on $3$ alternatives which almost always rank one candidates at the top. Thus a quantitative version of Arrow theorem cannot be stated just in terms of distance to a dictator. Indeed an example in~\cite{Keller:09} (see Theorem 1.2) implies that for non-balanced functions the probability of a paradox cannot be related in a linear fashion to the distance from dictator or to other functions in $\F_3(n)$.


As noted in~\cite{Kalai:02}, there is an interesting connection between quantitative Arrow statements and the concept of testing introduced in \cite{RubinfeldSudan:96,GoGoRo:96} which was studied and used extensively since.
Roughly speaking a property of functions is testable if it is possible to perform a randomized test for the property such that if the probability that the function passes the test is close to $1$, then the function has to be close to a function with the property (say in the hamming distance).
In terms of testing, our result states that among all functions satisfying the IIA property, the Transitivity property is testable.
Moreover, the natural test "works": i.e., in order to test for transitivity, one can pick a random input and check if the outcome is transitive.

We finally want to note that the special case of the quantitative Arrow theorem proved by Kalai~\cite{Kalai:02} for balanced functions has been used to derive the first quantitative version of the Gibbard-Satterthwaite Theorem~\cite{Gibbard:73,Satterthwaite:75} in~\cite{FrKaNi:08}. The results of~\cite{FrKaNi:08} are limited in the sense that they require neutrality and apply only to 3 candidates. It is interesting to explore if the full quantitative version of Arrow theorem proven here will allow to obtain stronger quantitative version of the Gibbard-Satterthwaite Theorem.

\subsection{Proof Ideas}
We first recall the notion of {\em influence} of a voter. Recall that for $f : \{-1,1\}^n \to \{-1,1\}$, the influence of
voter $1 \leq i \leq n$ is given by
\[
I_i(f) = \P[f(X_1,\ldots,X_{i-1},0,X_{i+1},\ldots,X_n) \neq f(X_1,\ldots,X_{i-1},1,X_{i+1},\ldots,X_n)],
\]
where $X_1,\ldots,X_n$ are distributed uniformly at random.
The notion of influence is closely related to the notion of {\em pivotal} voter which was introduced in Barabera's proof of Arrow's Theorem~\cite{Barbera:80}.
Recall that voter $i$ is pivotal for $f$ at $x$
if $f(x_1,\ldots,x_{i-1},1,x_{i+1},\ldots,x_n) \neq f(x_1,\ldots,x_{i-1},-1,x_{i+1},\ldots,x_n)$. Thus the influence of voter $i$
is the expected probability that voter $i$ is pivotal.

We discuss the main ideas of the proof for the case $k=3$. By the IIA property that pairwise preference $(a>b),(b>c)$ and $(c>a)$ are decided by three different functions $f,g$ and $h$ depending on the pairwise preference of the individual voters.
\begin{itemize}
\item
The crucial and novel step is showing that for every $\eps > 0$, there exists $\delta > 0$, such that
if two different voters $i \neq j$ satisfy $I_i(f) > \eps$ and $I_j(g) > \eps$, then the probability of a non-transitive outcome is at least $\delta = \eps^C$, for some $C > 0$.
The proof of this step uses and generalizes the results of~\cite{MORSS:06}, which are based on inverse-hyper-contractive estimates~\cite{Borell:85}. We show that if $I_i(f) > \eps$ and $I_j(g) > \eps$ then with probability at least $\eps^C$, over all voters but $i$ and $j$, the restricted $f$ and $g$, have $i$ and $j$ pivotal. 
 We show how this implies that with probability $\eps^C$ we may chose the rankings of $i$ and $j$, leading to a non-transitive outcome. And therefore the probability of a paradox is at least $\eps^C/36$. This step may be viewed as a quantitative version of a result by Barbera~\cite{Barbera:80}. The main step in Barbera's proof of Arrow theorem is proving that if two distinct voters are pivotal for two different pairwise preferences that the constitution has a non-rational outcome.

\item
The results above suffice to establish a quantitative Arrow theorem for $\eps = O(n^{-1})$.
This follows from the fact that all influences of a function are bounded by $\eps n^{-1}$ then the function is $O(\eps)$ close to a constant function. The probability of paradox obtained here is of order $\eps^C$.

\item
Next, we show that the statement of the theorem holds when $n$ is large and all functions $f,g,h$ are symmetric threshold functions. Note that in this case, since symmetric thresholds functions are low influence functions, the conclusion of the theorem reads: if non of the alternatives is ranked at top/bottom with probability $\geq 1-\eps$, then the probability of a paradox is at least $\delta$.

\item
Using the Majority is stablest result~\cite{MoOdOl:09} (see also~\cite{MoOdOl:05}) in the strong form proven in~\cite{Mossel:09} (see also~\cite{Mossel:08}) we extend the result above as long as for any pair of functions say $f,g$ there exist no variable for which {\em both} $I_i(f)$ and $I_i(g)$ is large.

\item
The remaining case is where there exists a single voter $i$, such that $I_i(f)$ is large for at least two of the functions and all other variables have low influences. By expanding the paradox probability in terms of the $6$ possible ranking of voter $i$ and using the previous case, we obtain the conclusion of the theorem, i.e., that in this case either there is a non-negligible probability of a paradox, or the function close to a dictator function on voter $i$.
\end{itemize}

Some notation and preliminaries are given in Section~\ref{sec:pre}. The proof for the case where two different functions have two different influential voters is given in Section~\ref{sec:two}. This already allows to establish a quantitative Arrow theorem in the
case where the functions is very close to an element of $\F_k(n)$ in Section~\ref{sec:almost}. 
The proof of the Gaussian Arrow Theorem is given in Section~\ref{sec:gauss}. Applying "strong" non-linear invariance the result is obtained for low influence functions in Section~\ref{sec:low}. The result with one influential variable is the derived in Section~\ref{sec:one}. The proof of the main result for $3$ alternatives is then given in Section~\ref{sec:3}. Section~\ref{sec:k} concludes the proof by deriving the proof for any number of alternatives. The combinatorial Theorem~\ref{thm:general_arrow} is proven in Section~\ref{sec:general_arrow}. Section~\ref{sec:sym} provides the adjustment of the proofs needed to obtain the results for symmetric distributions.

\subsection{Acknowledgement}
Thanks to Marcus Issacson and Arnab Sen for interesting discussions. Thanks to Salvador Barbera for helpful comments on a manuscript of the paper.  

\section{Preliminaries} \label{sec:pre}
For the proof we introduce some notation and then follow the steps above.
\subsection{Some Notation}
The following notation will be useful for the proof.
A social choice function is a function from a profile on $n$ permutation, i.e., an element of $S(k)^n$ to a binary decision for every pair of alternatives which one is preferable. The set of pairs of candidates is nothing but $k \choose 2$. Therefore a social choice function is a map $F : S(k)^n \to \{-1,1\}^{k \choose 2}$ where $F(\sigma) =
(h^{a>b}(\sigma) : \{a,b\} \in {k \choose 2})$
means
\[
F \mbox{ ranks } a \mbox{ above } b \mbox{ if } h^{a>b}(\sigma) = 1, \quad
F \mbox{ ranks } b \mbox{ above } a \mbox{ if } h^{a>b}(\sigma) = -1.
\]
We will further use the convention that $h^{a>b}(\sigma) = -h^{b>a}(\sigma)$.

The binary notation above is also useful to encode the individual preferences $\sigma(1),\ldots,\sigma(n)$ as follows.
Given $\sigma = \sigma(1),\ldots,\sigma(n)$ we define binary vectors $x^{a>b} = x^{a>b}(\sigma)$ in the following manner:
\[
 x^{a>b}(i) = 1, \quad \mbox{if voter } i \mbox{ ranks } a \mbox{ above } b; \quad
 x^{a>b}(i) = -1, \quad \mbox{if voter } i \mbox{ ranks } a \mbox{ above } b
\]


The IIA condition implies that the pairwise preference between any pair of outcomes depends only on the individual pairwise preferences. Thus, if $F$ satisfies the IIA property then there exists functions $f^{a>b}$ for every pair of candidates $a$ and $b$ such that
\[
F(\sigma) = ((f^{a>b}(x^{a>b}) : \{a,b\} \in {k \choose 2})
\]



We will also consider more general distributions over $S(k)$. We call a distribution $\mu$ on $S(k)$ {\em symmetric} if
$\mu(-\sigma) = \mu(\sigma)$ for all $\sigma \in S(k)$. We will write $\alpha = \alpha(\mu)$ for $\min (\mu(\sigma) : \sigma \in S(k))$.

\subsection{The Correlation Between $x^{a>b}$ and $x^{b>c}$}
For some of the derivations below will need the correlations between the random variables $x^{a>b}(i)$ and $x^{b>c}(i)$.
We have the following easy fact:
\begin{Lemma} \label{lem:cor_vote}
Assume that voters vote uniformly at random from $S(3)$. Then:
\begin{enumerate}
\item
For all $i \neq j$ and all $a,b,c,d$ the variables $x^{a>b}(i)$ and $x^{c>d}(j)$ are independent.
\item
If $a,b,c$ are distinct then $\E[x^{a>b}(i) x^{b>c}(i)] = -1/3$.
\end{enumerate}
\end{Lemma}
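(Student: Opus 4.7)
Plan of proof for Lemma~\ref{lem:cor_vote}.

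For part (1), the plan is to invoke the product structure of the voting distribution. Since the profile $\sigma = (\sigma(1),\ldots,\sigma(n))$ is drawn from the product measure (uniform on $S(3)$ independently for each voter), the individual permutations $\sigma(i)$ and $\sigma(j)$ are independent whenever $i \neq j$. The variable $x^{a>b}(i)$ is a deterministic function of $\sigma(i)$ alone, and $x^{c>d}(j)$ is a deterministic function of $\sigma(j)$ alone; functions of independent random variables are independent, which yields the claim. This part is essentially immediate from the setup and should take only a line or two.

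For part (2), the plan is a direct computation for a single voter, so I may drop the index $i$ and compute $\E[x^{a>b} x^{b>c}]$ under the uniform distribution on $S(3)$. The cleanest approach is to condition on the rank of the middle alternative $b$ in the voter's permutation, which is uniform over $\{1,2,3\}$. If $b$ is ranked first then $x^{a>b} = -1$ and $x^{b>c} = +1$, giving product $-1$; if $b$ is ranked last then $x^{a>b} = +1$ and $x^{b>c} = -1$, again giving product $-1$; if $b$ is ranked in the middle then $x^{a>b}$ and $x^{b>c}$ carry the same sign (both $+1$ in the ordering $a>b>c$ and both $-1$ in $c>b>a$), giving product $+1$. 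Averaging gives
\[
\E[x^{a>b} x^{b>c}] = \tfrac{1}{3}(-1) + \tfrac{1}{3}(-1) + \tfrac{1}{3}(+1) = -\tfrac{1}{3},
\]
as required. An alternative, equally short, route is brute-force enumeration of the six permutations of $\{a,b,c\}$, for which the product equals $+1$ on exactly two permutations ($a>b>c$ and $c>b>a$) and $-1$ on the other four.

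There is no real obstacle here; the lemma is a routine combinatorial fact about the uniform distribution on $S(3)$, recorded for later use in Fourier-analytic manipulations involving the correlated binary vectors $x^{a>b}$ and $x^{b>c}$. The only thing to be a bit careful about is the sign convention $x^{a>b} = -x^{b>a}$, but once that is fixed, either conditioning on the position of $b$ or direct enumeration produces the value $-1/3$ cleanly.
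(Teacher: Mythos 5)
Your proof is correct and follows essentially the same route as the paper: part (1) is immediate from the product structure of the measure, and part (2) is a direct count over $S(3)$. One small point worth flagging: your count is the right one. The product $x^{a>b}x^{b>c}$ equals $+1$ on exactly two of the six permutations ($a>b>c$ and $c>b>a$) and $-1$ on the other four, giving $(2-4)/6 = -1/3$. The paper's prose states these counts with the $4$ and the $2$ swapped (``$4$ permutations where $x^{a>b}(i) = x^{b>c}(i)$ and $2$ where they differ''), which would yield $+1/3$; this is evidently a typo in the paper, and your version is the corrected one. Your conditioning-on-the-rank-of-$b$ framing is a tidy way to organize the same computation, and your observation about the sign convention $x^{a>b} = -x^{b>a}$ is the right thing to be careful about.
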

For the proof of part 2 of the Lemma, note that the expected value depends only on the distribution over the rankings of $a,b,c$ which is uniform. It thus suffices to consider the case $k=3$. In this case there are $4$ permutations where $x^{a>b}(i) = x^{b>c}(i)$ and
$2$ permutations where $x^{a>b}(i) \neq x^{b>c}(i)$.



We will also need the following estimate
\begin{Lemma} \label{lem:cor_vote2}
Assume that voters vote uniformly at random from $S(3)$. Let $f = x^{c>a}$ and
let $(Tf)(x^{a>b},x^{b>c}) = \E[f | x^{a>b},x^{b>c}]$. Then
\[
| Tf |_2 = 1/\sqrt{3}.
\]
\end{Lemma}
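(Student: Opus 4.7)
The plan is to compute $Tf$ explicitly by enumerating the six permutations in $S(3)$ and then read off $\|Tf\|_2^2$ directly. Since $k=3$, only the marginal distribution on the ranking of $\{a,b,c\}$ matters, and that marginal is uniform on the six permutations of $a,b,c$.

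First I would list the six permutations and record the triple $(x^{a>b},x^{b>c},x^{c>a})$ at each. Two of them, namely $a>b>c$ and $c>b>a$, are the only ones in which the pair $(x^{a>b},x^{b>c})$ takes the values $(1,1)$ and $(-1,-1)$ respectively; on these the sign of $x^{c>a}$ is forced (equal to $-1$ and $+1$). The remaining four permutations split evenly between $(x^{a>b},x^{b>c})=(1,-1)$ and $(-1,1)$; in each of these two conditional buckets, $x^{c>a}$ takes both values $\pm 1$ with equal probability, so the conditional expectation is $0$.

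Next I would read off the values of $Tf$: the function $Tf$ equals $-1$ on $(1,1)$, equals $+1$ on $(-1,-1)$, and equals $0$ on the other two atoms of the $\sigma$-field generated by $(x^{a>b},x^{b>c})$. The probability masses of these atoms under the uniform law on $S(3)$ are $\tfrac16,\tfrac16,\tfrac13,\tfrac13$. Hence
\[
\|Tf\|_2^2 \;=\; (-1)^2\cdot\tfrac16 + (+1)^2\cdot\tfrac16 + 0 + 0 \;=\; \tfrac13,
\]
and taking the square root yields $\|Tf\|_2 = 1/\sqrt{3}$, as claimed.

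There is really no obstacle here; the only thing to be careful about is the observation that $(1,-1)$ and $(-1,1)$ each come from two permutations that assign opposite signs to $x^{c>a}$, which is exactly the computation already used implicitly in Lemma~\ref{lem:cor_vote}. If one wanted a slightly more conceptual phrasing, one could note $\|Tf\|_2^2 = \E[f\cdot Tf] = \E[x^{c>a}\cdot\E[x^{c>a}\mid x^{a>b},x^{b>c}]]$ and argue from the identity $x^{a>b}+x^{b>c}+x^{c>a}\in\{-1,+1\}$ that the conditional expectation vanishes except on the two fully ordered atoms, but the enumeration above is the shortest route.
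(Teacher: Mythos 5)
Your proof is correct and follows essentially the same route as the paper: identify the two permutations (probability $1/3$ total) on which $(x^{a>b},x^{b>c})$ determines $x^{c>a}$, observe that the conditional expectation vanishes on the remaining atoms, and conclude $\|Tf\|_2^2 = 1/3$. Your enumeration simply makes the paper's brief argument more explicit.
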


\begin{proof}
There are two permutations where $x^{a>b},x^{b>c}$ determine $x^{c>a}$. For all other permutations $x^{c>a}$ is equally likely to be $-1$ and $1$ conditioned on $x^{a>b}$ and $x^{b>c}$. We conclude that $| Tf |_2^2 = 1/3$ and therefore
$|Tf|_2 = 1/\sqrt{3}$.
\end{proof}




\subsection{Inverse Hyper-contraction and Correlated Intersections Probabilities}
We will use some corollaries of the inverse hyper-contraction estimates proven by Borell~\cite{Borell:82}.
The following corollary is from~\cite{MORSS:06}.

\begin{Lemma} \label{lem:inv_hyp_org}
Let $x,y \in \{-1,1\}^n$ be distributed uniformly and $(x_i,y_i)$ are independent.
Assume that $\E[x(i)] = \E[y(i)] = 0$ for all $i$ and that
$\E[x(i) y(i)] = \rho \geq 0$. Let $B_1, B_2 \subset \{-1,1\}^n$ be two sets and assume that
\[
\P[B_1] \geq e^{-\alpha^2}, \quad \P[B_2] \geq e^{-\beta^2}.
\]
Then:
\[
\P[x \in B_1, y \in B_2] \geq \exp(-\frac{\alpha^2+\beta^2+2 \rho \alpha \beta}{1-\rho^2}).
\]
\end{Lemma}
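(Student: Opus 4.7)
The plan is to reduce the two-function probability to a one-variable expectation through the noise operator, apply Borell's reverse hypercontractive inequality, and then optimize the exponents to recover the claimed constant. Observe first that the pair $(x,y)$ is a $\rho$-correlated couple on $\{-1,1\}^n$, so $\E[g(y) \mid x] = (T_\rho g)(x)$ for every bounded $g$, where $T_\rho$ is the Bonami--Beckner noise operator. Hence
\[
\P[x \in B_1,\, y \in B_2] \;=\; \E\bigl[\mathbf{1}_{B_1}(x)\,(T_\rho \mathbf{1}_{B_2})(x)\bigr].
\]

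The main tool I would invoke is the ``two-function'' form of Borell's inverse hypercontractive inequality~\cite{Borell:82}: for nonnegative $f,g$ on $\{-1,1\}^n$ and exponents tied by the constraint $(1-p)(1-q) \ge \rho^{2}$,
\[
\E[f(x)\,g(y)] \;\ge\; \|f\|_{p}\,\|g\|_{q}.
\]
This is obtained by combining Borell's single-function statement $\|T_\rho g\|_{s} \ge \|g\|_{r}$ (valid whenever $(1-s)/(1-r)\ge\rho^{2}$) with a reverse H\"older step that pairs $T_\rho g$ against $f$. Applied to $f = \mathbf{1}_{B_1}$, $g = \mathbf{1}_{B_2}$ and using $\|\mathbf{1}_{B}\|_{p} = \P[B]^{1/p}$ for $p>0$, this yields
\[
\P[x \in B_1,\, y \in B_2] \;\ge\; \exp\!\Bigl(-\tfrac{\alpha^{2}}{p} - \tfrac{\beta^{2}}{q}\Bigr).
\]

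The final step is the optimization: minimize $\alpha^{2}/p + \beta^{2}/q$ over the admissible set $(1-p)(1-q) = \rho^{2}$. Setting $u = 1-p$, $v = 1-q$ with $uv = \rho^{2}$ and applying Lagrange multipliers gives the minimizers
\[
p^{*} \;=\; \frac{\alpha(1-\rho^{2})}{\alpha + \rho\beta}, \qquad q^{*} \;=\; \frac{\beta(1-\rho^{2})}{\beta + \rho\alpha},
\]
and substituting yields exactly the exponent $(\alpha^{2}+\beta^{2}+2\rho\alpha\beta)/(1-\rho^{2})$. The step I expect to require the most care is the passage from Borell's single-function bound to the symmetric two-function form in a range of exponents where the indicator $L^{p}$-norms are well behaved and the admissible region is nontrivial for every $\rho\in(0,1)$; once that is set up cleanly the rest is a routine calculus optimization.
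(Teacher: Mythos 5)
Your proof is correct, and it reconstructs the argument of the reference the paper cites: the paper itself gives no proof of this lemma, stating only that it is a corollary from MORSS~\cite{MORSS:06} of Borell's reverse hypercontractive inequality~\cite{Borell:82}. Your route --- the two-function reverse hypercontractive bound $\E[f(x)g(y)]\ge\|f\|_p\|g\|_q$ valid for nonnegative $f,g$ and $p,q\in(0,1)$ with $(1-p)(1-q)\ge\rho^2$, obtained by pairing the single-function Borell estimate with reverse H\"older, followed by the Lagrange optimization of $\alpha^2/p+\beta^2/q$ --- is exactly the standard derivation, and your stated minimizers $p^*=\alpha(1-\rho^2)/(\alpha+\rho\beta)$, $q^*=\beta(1-\rho^2)/(\beta+\rho\alpha)$ do satisfy the constraint, lie in $(0,1)$, and yield precisely the exponent $(\alpha^2+\beta^2+2\rho\alpha\beta)/(1-\rho^2)$.
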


We will need to generalize the result above to negative $\rho$ and further to different $\rho$ value for different bits.

\begin{Lemma} \label{lem:inv_hyp}
Let $x,y \in \{-1,1\}^n$ be distributed uniformly and $(x_i,y_i)$ are independent.
Assume that $\E[x(i)] = \E[y(i)] = 0$ for all $i$ and that
$|\E[x(i) y(i)]| \leq \rho$. Let $B_1, B_2 \subset \{-1,1\}^n$ be two sets and assume that
\[
\P[B_1] \geq e^{-\alpha^2}, \quad \P[B_2] \geq e^{-\beta^2}.
\]
Then:
\[
\P[x \in B_1, y \in B_2] \geq \exp(-\frac{\alpha^2+\beta^2+2 \rho \alpha \beta}{1-\rho^2}).
\]
In particular if $\P[B_1] \geq \eps$ and $\P[B_2] \geq \eps$, then:
\begin{equation} \label{eq:two_small}
\P[x \in B_1, y \in B_2] \geq \eps^{\frac{2}{1-\rho}}.
\end{equation}
\end{Lemma}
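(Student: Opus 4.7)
The plan is to reduce Lemma~\ref{lem:inv_hyp} to Lemma~\ref{lem:inv_hyp_org} via a sign flip and a tensorization observation, then derive~\eqref{eq:two_small} by a direct substitution.

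First I would handle the sign issue by a coordinate reflection. Let $S = \{i : \E[x(i)y(i)] < 0\}$ and define a new vector $y' \in \{-1,1\}^n$ by $y'(i) = -y(i)$ for $i \in S$ and $y'(i) = y(i)$ otherwise. Since the coordinates $y(i)$ are independent and symmetric, $y'$ has exactly the same distribution as $y$, and the pairs $(x(i), y'(i))$ remain independent across $i$ with $\E[x(i) y'(i)] = |\E[x(i)y(i)]| \in [0,\rho]$. Letting $B_2'$ be the set obtained from $B_2$ by negating the coordinates in $S$, we have $\P[y' \in B_2'] = \P[y \in B_2] \geq e^{-\beta^2}$ and $\{y \in B_2\} = \{y' \in B_2'\}$. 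This reduces the statement to the case where all correlations $\rho_i := \E[x(i)y(i)]$ lie in $[0, \rho]$.

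Next I would remove the assumption that the $\rho_i$ are equal. The proof of Lemma~\ref{lem:inv_hyp_org} in~\cite{MORSS:06} reduces, via H\"older's inequality, to Borell's two-function inverse hypercontractive inequality $\E[f(x)g(y)] \geq \|f\|_p \|g\|_q$ for $p,q < 1$ satisfying $(1-p)(1-q) = \rho^2$, applied to $f = 1_{B_1}$ and $g = 1_{B_2}$. This inequality is obtained by tensorizing the single-coordinate version, which for a pair of correlation $\rho_i$ holds whenever $(1-p)(1-q) \geq \rho_i^2$. Hence if each $\rho_i \in [0, \rho]$, the single constraint $(1-p)(1-q) \geq \rho^2$ simultaneously certifies every coordinate, and the argument of Lemma~\ref{lem:inv_hyp_org} goes through verbatim with the worst-case parameter $\rho$ in place of a uniform correlation. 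The resulting lower bound $\exp\bigl(-(\alpha^2+\beta^2+2\rho\alpha\beta)/(1-\rho^2)\bigr)$ is monotone non-increasing in $\rho$, so using the hypothesis $|\E[x(i)y(i)]| \leq \rho$ only weakens the conclusion, as needed.

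For~\eqref{eq:two_small}, I would choose $\alpha^2 = \beta^2 = \ln(1/\eps)$, so that both hypotheses $\P[B_1], \P[B_2] \geq \eps$ are equivalent to $\P[B_j] \geq e^{-\alpha^2}$. A short computation gives
\[
\frac{\alpha^2+\beta^2+2\rho\alpha\beta}{1-\rho^2} \;=\; \frac{2(1+\rho)\ln(1/\eps)}{1-\rho^2} \;=\; \frac{2\ln(1/\eps)}{1-\rho},
\]
and exponentiating yields $\P[x \in B_1, y \in B_2] \geq \eps^{2/(1-\rho)}$. The main subtle point is the middle step: one must verify carefully that Borell's inequality tensorizes across coordinates with different correlation parameters, but this is standard once the joint noise operator is written as $\bigotimes_i T_{\rho_i}$, since the condition on $(p,q)$ involves only the worst coordinate.
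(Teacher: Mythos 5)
Your argument is correct, but it takes a genuinely different route from the paper. You handle the sign issue by a coordinate reflection and then re-open the proof of Lemma~\ref{lem:inv_hyp_org} in~\cite{MORSS:06}, arguing that the reverse-H\"older / reverse-hypercontractivity inequality at the heart of that proof tensorizes across coordinates with different correlations once one takes the worst-case exponent pair $(p,q)$ with $(1-p)(1-q)=\rho^2$. The paper instead treats Lemma~\ref{lem:inv_hyp_org} as a black box and uses a coupling: it introduces a vector $z$ with $\E[x_iz_i]=\rho$ exactly for all $i$ and an independent random sign vector $w$ with $\E[w_i]=\E[x_iy_i]/\rho\in[-1,1]$, so that $(x,z\cdot w)\stackrel{d}{=}(x,y)$. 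Conditioning on $w$, the pair $(x,z)$ has constant correlation $\rho$, and $\{z\cdot w\in B_2\}=\{z\in w\cdot B_2\}$ with $\P[w\cdot B_2]=\P[B_2]$ since coordinate reflection is measure-preserving; applying Lemma~\ref{lem:inv_hyp_org} and averaging over $w$ gives the bound directly. This coupling absorbs both the sign normalization and the unequal correlations in one step without touching the internals of~\cite{MORSS:06}. Your route works, but it leans on the step you yourself flag as needing care: that the single-coordinate reverse-H\"older inequality with exponents calibrated to $\rho$ remains valid when the actual correlation $\rho_i$ is strictly smaller than $\rho$. This monotonicity in $\rho$ is true (one way to see it: write $T_{\rho_i}=T_{\rho_i/\rho}T_{\rho}$ and use that $\|T_s f\|_p\geq\|f\|_p$ for $p<1$, $f\geq 0$), but it is an extra fact that has to be supplied, whereas the paper's coupling avoids it entirely. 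Your substitution $\alpha^2=\beta^2=\ln(1/\eps)$ for~\eqref{eq:two_small} matches the paper's "simple substitution" and is correct.
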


\begin{proof}
Take $z$ so that $(x_i,z_i)$ are independent and $\E[z_i] = 0$ and $\E[x_i z_i] = \rho$.
It is easy to see there exists $w_i$ independent of $x,z$ with s.t. the joint distribution of
$(x,y)$ is the same as $(x,z \cdot w)$, where $z \cdot w = (z_1 w_1,\ldots,z_n w_n)$.
Now for each fixed $w$ we have that
\[
\P[x \in B_1, z \cdot w \in B_2] =
\P[x \in B_1, z \in w \cdot B_2] \geq \exp(-\frac{\alpha^2+\beta^2+2 \rho \alpha \beta}{1-\rho^2}),
\]
where $w \cdot B_2 = \{w \cdot w' : w' \in B_2 \} $.
Therefore taking expectation over $w$ we obtain:
\[
\P[x \in B_1, y \in B_2] = \E \P[x \in B_1, z \cdot w \in B_2] \geq \exp(-\frac{\alpha^2+\beta^2+2 \rho \alpha \beta}{1-\rho^2})
\]
as needed.
The conclusion~(\ref{eq:two_small}) follows by simple substitution (note the difference with Corollary 3.5 in~\cite{MORSS:06} for sets of equal size which is a typo).
\end{proof}

Applying the CLT and using~\cite{Borell:85} one obtains the same result for Gaussian random variables.
\begin{Lemma} \label{lem:inv_hyp_gauss}
Let $N,M$ be $N(0,I_n)$ with $(N(i),M(i))_{i=1}^n$ independent.
Assume that
$|\E[N(i) M(i)]| \leq \rho$. Let $B_1, B_2 \subset \R^n$ be two sets and assume that
\[
\P[B_1] \geq e^{-\alpha^2}, \quad \P[B_2] \geq e^{-\beta^2},
\]
Then:
\[
\P[N \in B_1, M \in B_2] \geq \exp(-\frac{\alpha^2+\beta^2+2 \rho \alpha \beta}{1-\rho^2}).
\]
In particular if $\P[B_1] \geq \eps$ and $\P[B_2] \geq \eps$, then:
\begin{equation} \label{eq:two_small_gauss}
\P[N \in B_1, M \in B_2] \geq \eps^{\frac{2}{1-\rho}}.
\end{equation}
\end{Lemma}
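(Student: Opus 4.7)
The plan is to derive the Gaussian statement from the already-proven Boolean Lemma~\ref{lem:inv_hyp} via a central limit theorem transfer. The key point is that Lemma~\ref{lem:inv_hyp} already handles varying coordinate-wise correlations $|\rho_i| \leq \rho$, so once we realize $(N, M)$ as a CLT limit of Bernoulli pairs with matching coordinate correlations, no additional structural work is needed.

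First I would discretize. Set $\rho_i = \E[N(i) M(i)]$, so $|\rho_i| \leq \rho$. For each coordinate $i$, pick a pair of uniform $\pm 1$ random variables $(X^{(i)}, Y^{(i)})$ with $\E[X^{(i)} Y^{(i)}] = \rho_i$, take $m$ independent copies across $k$, and make the $i$'s independent of each other, forming the normalized sums $N_m(i) = m^{-1/2} \sum_{k=1}^m X_k^{(i)}$ and $M_m(i) = m^{-1/2} \sum_{k=1}^m Y_k^{(i)}$. The multivariate CLT then gives $(N_m, M_m) \Rightarrow (N, M)$ in $\R^{2n}$.

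To compare Boolean and Gaussian probabilities of $B_1, B_2$, I would approximate from inside by continuity sets. By inner regularity of the Gaussian measure, for every $\eta > 0$ one can choose open sets $B_j^\eta \subseteq B_j$ (say, finite unions of open rectangles) whose topological boundaries are Gaussian-null, with $\P[N \in B_1^\eta] \geq \P[N \in B_1] - \eta$ and $\P[M \in B_2^\eta] \geq \P[M \in B_2] - \eta$. Let $\widetilde B_j^{\eta,m}$ denote the preimage of $B_j^\eta$ under the map $x \mapsto (m^{-1/2} \sum_k x_k^{(i)})_i$. Applying Lemma~\ref{lem:inv_hyp} to $X, Y \in \{-1,1\}^{mn}$, whose per-coordinate correlations are bounded in absolute value by $\rho$, yields
\[
\P[X \in \widetilde B_1^{\eta,m},\ Y \in \widetilde B_2^{\eta,m}] \geq \exp\!\left(-\frac{\alpha_m^2 + \beta_m^2 + 2 \rho \alpha_m \beta_m}{1 - \rho^2}\right),
\]
with $\alpha_m^2 = -\log \P[X \in \widetilde B_1^{\eta,m}]$ and $\beta_m^2 = -\log \P[Y \in \widetilde B_2^{\eta,m}]$.

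Since $B_1^\eta$, $B_2^\eta$, and their product are continuity sets of the relevant Gaussian laws, the Portmanteau theorem gives $\alpha_m \to \alpha_\eta := \sqrt{-\log \P[N \in B_1^\eta]}$, $\beta_m \to \beta_\eta$, and the Boolean joint probability converges to $\P[(N,M) \in B_1^\eta \times B_2^\eta] \leq \P[N \in B_1,\ M \in B_2]$. Letting $\eta \to 0$ and using that the right-hand side is monotone increasing in $\alpha, \beta$ (valid because $\rho \geq 0$ and $\alpha,\beta \geq 0$) yields the bound in the lemma; the $\eps$-form~(\ref{eq:two_small_gauss}) follows by direct substitution as at the end of Lemma~\ref{lem:inv_hyp}. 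The main obstacle I anticipate is the interchange of limits: one must arrange inner approximations $B_j^\eta$ whose boundaries are Gaussian-null so that Portmanteau applies simultaneously to both marginals and to the product set, which is what allows weak convergence of $(N_m, M_m)$ to upgrade to the necessary convergence of the Boolean preimage probabilities.
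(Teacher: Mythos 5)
Your route is genuinely different from the paper's. The paper invokes Borell's 1985 Gaussian extremality result to reduce the $n$-dimensional problem to one dimension, where the extremal $B_1,B_2$ are intervals and hence automatically continuity sets, after which the CLT transfer from Lemma~\ref{lem:inv_hyp} is immediate. You instead do the CLT transfer directly in $\R^n$, sidestepping the extremality theorem entirely; this is a more elementary strategy, and it works precisely because Lemma~\ref{lem:inv_hyp} already accommodates coordinate-dependent correlations $|\rho_i|\le\rho$.

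There is, however, a concrete flaw in the approximation step. You claim that by ``inner regularity'' one can choose \emph{open} sets $B_j^\eta\subseteq B_j$ that are finite unions of rectangles with $\gamma(B_j^\eta)\ge\gamma(B_j)-\eta$. This is false for general Borel sets: inner regularity provides compact inner approximants, not open ones, and a closed nowhere-dense set of positive Gaussian measure (a fat Cantor set) has no open subset of positive measure at all. The repair is standard but it does change the logic: approximate each $B_j$ in Gaussian \emph{symmetric difference} by a finite union of rectangles $R_j$ with $\gamma(B_j\triangle R_j)<\eta$ (so $R_j\not\subseteq B_j$ in general), note that $R_1\times R_2$ and its marginal projections have Gaussian-null boundaries so Portmanteau applies, and then replace your containment inequality by
$\P[N\in R_1,\,M\in R_2]\le\P[N\in B_1,\,M\in B_2]+2\eta$ together with $\gamma(R_j)\ge\gamma(B_j)-\eta$. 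Letting $m\to\infty$ and then $\eta\to0$, and using the monotonicity in $\alpha,\beta$ (which, as you note, requires $\rho\ge 0$), recovers the claimed bound. With that correction your argument is sound, and it gives a self-contained alternative to the paper's appeal to Borell's extremality theorem.
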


\begin{proof}
Fix the values of $\alpha$ and $\beta$ and assume without loss of generality that $\max_i |\E[N(i) M(i)]|$ is obtained for $i=1$.
Then by~\cite{Borell:85} (see also~\cite{Mossel:09}), the minimum of the quantity $\P[N \in B_1, M \in B_2]$ under
the constraints on the measures given by $\alpha$ and $\beta$ is obtained in one dimension, where $B_1$ and $B_2$ are intervals
$I_1, I_2$. Look at random variables $x(i), y(i)$, where $\E[x(i)] = \E[y(i)] = 0$ and
$\E[x(i) y(i)] =  \E[M_1 N_1]$. Let $X_n = n^{-1/2} \sum_{i=1} x^{a>b}(i)$ and $Y_n = n^{-1/2} \sum_{i=1} x^{a>b}(i)$.
Then the CLT implies that
\[
\P[X_n \in I_1] \to \P[N_1 \in B_1], \quad
\P[Y_n \in I_2] \to \P[M_1 \in B_2],
\]
and
\[
\P[X_n \in I_1, Y_n \in I_2] \to \P[N_1 \in B_1, M_1 \in B_2].
\]
The proof now follows from the previous lemma.
\end{proof}

\section{Two Influential Voters} \label{sec:two}
We begin the proof of Arrow theorem by considering the case of $3$ candidates named $a,b,c$ and two influential voters named $1$ and $2$.
Note that for each voter there are $6$ legal values for $(x_i^{a>b},x_i^{b>c},x_i^{c>a})$. These are all vector different from $(-1,-1,-1)$ and $(1,1,1)$.
Similarly constitution given by $f^{a>b},f^{b>c}$ and $f^{c>a}$ has a non-transitive outcome if and only if
\[
(f^{a>b}(x^{a>b}),f^{b>c}(x^{b>c}),f^{c>a}(x^{c>a})) \in \{(-1,-1,-1),(1,1,1)\}.
\]

\subsection{Two Pivots Imply Paradox}
We will use the following Lemma which as kindly noted by Barbera was first proven in his paper~\cite{Barbera:80}. 

\begin{Proposition} \label{prop:32}
Consider a social choice function on $3$ candidates $a,b$ and $c$ and two voters denoted $1$ and $2$. Assume that the social choice function satisfies that IIA condition and that voter $1$ is pivotal for $f^{a>b}$ and voter $2$ is pivotal for $f^{b>c}$.
Then there exists a profile for which
$(f^{a>b}(x^{a>b}),f^{b>c}(x^{b>c}),f^{c>a}(x^{c>a}))$ is non-transitive.
\end{Proposition}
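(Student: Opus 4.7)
The plan is to exploit the two pivots to sweep through all four sign patterns of $(f^{a>b}, f^{b>c})$ and then use the validity constraint on each voter's ranking (no triple $(+,+,+)$ or $(-,-,-)$) to force one of the two cyclic outcomes.

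First, I extract witnesses of pivotality: there exist $s_1, s_2 \in \{-1,1\}$ such that $x_1 \mapsto f^{a>b}(x_1, s_2)$ and $x_2 \mapsto f^{b>c}(s_1, x_2)$ are both non-constant. Holding $x_1^{b>c} = s_1$ and $x_2^{a>b} = s_2$ and letting $(x_1^{a>b}, x_2^{b>c})$ vary over $\{-1,1\}^2$ produces four candidate profiles on which $(f^{a>b}, f^{b>c})$ realizes every value in $\{-1,1\}^2$. Call $P_+$ the profile with $(f^{a>b}, f^{b>c}) = (+1,+1)$ and $P_-$ the one with $(-1,-1)$; these are the only two candidates for a cyclic outcome.

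Next, for each profile I identify the admissible values of $(x_1^{c>a}, x_2^{c>a})$. Validity of each voter's ranking forces $x_i^{c>a} = -1$ whenever $(x_i^{a>b}, x_i^{b>c}) = (+,+)$, forces $x_i^{c>a} = +1$ whenever $(-,-)$, and leaves $x_i^{c>a}$ free in the two mixed cases. This yields explicit admissible sets $A_+, A_- \subseteq \{-1,1\}^2$ of possible $(x_1^{c>a}, x_2^{c>a})$ in $P_+$ and $P_-$, respectively, which can be read off directly from $(s_1, s_2)$ and $(x_1^{a>b}, x_2^{b>c})$.

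Finally, I finish with a short case split on $(s_1, s_2)$, which by the global sign-flip symmetry $(s_1, s_2) \mapsto (-s_1, -s_2)$ reduces to the two representative cases $s_1 = s_2$ and $s_1 = -s_2$. In each case I tabulate $A_+$ and $A_-$ and observe the following pigeonhole: if $f^{c>a}$ is nowhere $+1$ on $A_+$, then the shared coordinates of $A_+$ and $A_-$ (arising from the aligned, forced values of $x_i^{c>a}$) force $f^{c>a}$ to take the value $-1$ on some element of $A_-$, so the cyclic outcome $(-,-,-)$ is realized in $P_-$; otherwise $(+,+,+)$ is realized in $P_+$. The main obstacle is simply checking that the tabulated admissible sets in each of the two cases really do cover $\{-1,1\}^2$ in the complementary way needed for the pigeonhole — but this is immediate once one notes that the forced value of $x_i^{c>a}$ in the aligned profiles is precisely the one consistent with the voter's ranking being transitive, so $f^{c>a}$ cannot simultaneously block both cyclic outcomes without contradicting itself on a shared coordinate.
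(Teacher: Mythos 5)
Your approach is, in essence, the same as the paper's: both proofs come down to finding a single value of $x^{c>a}$ that is admissible for \emph{both} the profile $P_+$ (where $(f^{a>b},f^{b>c})=(+1,+1)$) and the profile $P_-$ (where it is $(-1,-1)$), and then using $f^{c>a}$'s value at that point to decide which of the two completes to the monochromatic triple $(+1,+1,+1)$ or $(-1,-1,-1)$. Your "pigeonhole'' is exactly the statement $A_+\cap A_-\neq\emptyset$.

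The one soft spot is the proposed case split. The admissible sets $A_\pm$ are \emph{not} determined by $(s_1,s_2)$ alone: they also depend on which value $u$ of $x_1^{a>b}$ makes $f^{a>b}(u,s_2)=+1$ and which value $v$ of $x_2^{b>c}$ makes $f^{b>c}(s_1,v)=+1$. Flipping $(s_1,s_2)\mapsto(-s_1,-s_2)$ does not flip $(u,v)$ --- those are baked into $f$ --- so the sign-flip "symmetry'' does not honestly cut the tabulation to two cases; you would actually need to run through the alignments $u s_1,\;v s_2 \in\{+1,-1\}$ as well. Fortunately the case analysis is unnecessary: the point $(-s_1,-s_2)$ lies in $A_+\cap A_-$ unconditionally. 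Indeed, whenever $x_1^{c>a}$ is forced in one of $P_\pm$ (i.e.\ when voter $1$'s $(a>b,b>c)$ pair is aligned), the forced value equals minus the common sign, and that common sign equals $s_1$ in $P_+$ (since $u=s_1$ there) and also equals $s_1$ in $P_-$ (since $-u=s_1$ there); so the forced value is always $-s_1$, and when $x_1^{c>a}$ is free, $-s_1$ is admissible a fortiori. The same reasoning gives $-s_2$ for voter $2$. This is precisely the paper's direct choice $x^{c>a}=(-x,-y)$, which bypasses the tabulation entirely: fix $x^{c>a}=(-s_1,-s_2)$, then choose $x_1^{a>b}$ and $x_2^{b>c}$ so that $f^{a>b}$ and $f^{b>c}$ both equal $f^{c>a}(-s_1,-s_2)$.
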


For completeness we provide a proof using the language of the current paper (the proof of~\cite{Barbera:80} like much of the literature on Arrow's theorem uses binary relation notation).

\begin{proof}
Since voter $1$ is pivotal for $f^{a>b}$ and voter $2$ is pivotal for $f^{b>c}$
there exist $x,y$ such that
\[
f^{a>b}(0,y) \neq f^{a>b}(1,y), \quad f^{b>c}(x,0) \neq f^{b>c}(x,1).
\]
Look at the profile where
\[
x^{a>b} = (x^{\ast},y), \quad
x^{b>c} = (x,y^{\ast}), \quad
x^{c>a} = (-x,-y).
\]
We claim that for all values of $x^{\ast},y^{\ast}$ this correspond to transitive rankings of the two voters. This follows from the fact that neither $(x^{\ast},x,-x)$ nor $(y,y^{\ast},-y)$ belong to the set $\{(1,1,1),(-1,-1,-1)\}$. Note furthermore we may chose $x^{\ast}$ and $y^{\ast}$ such that
\[
f^{c>a}(-x,-y) = f^{a>b}(x^{\ast},y) = f^{b>c}(x,y^{\ast}).
\]
We have thus proved the existence of a non-transitive outcome as needed.
\end{proof}

\subsection{Two influential Voters Implies Joint Pivotality}
Next we establish the following result.
\begin{Lemma} \label{lem:two_inf}
Consider a social choice function on $3$ candidates $a,b$ and $c$ and $n$ voters denoted $1,\ldots,n$. Assume that the social choice function satisfies that IIA condition and that voters vote uniformly at random. Assume further that
$I_1(f^{a>b}) > \eps$ and $I_1(f^{b>c}) > \eps$.
Let
\[
B = \{\sigma : 1 \mbox{ is pivotal for } f^{a>b}(x^{a>b}(\sigma)) \mbox{ and } 2 \mbox{ is pivotal for } f^{b>c}(x^{b>c}(\sigma)) \}.
\]
Then
\[
\P[B] \geq \eps^3.
\]
\end{Lemma}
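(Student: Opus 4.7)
The plan is to realize $B$ as the joint event $\{U\in A_1\}\cap\{V\in A_2\}$ for two $\pm 1$ random vectors of equal length whose coordinate pairs are weakly correlated, and then to invoke the inverse hyper-contractive estimate of Lemma~\ref{lem:inv_hyp} with $\rho=1/3$. Set
\[
A_1=\{\text{voter } 1 \text{ is pivotal for } f^{a>b}\},\qquad A_2=\{\text{voter } 2 \text{ is pivotal for } f^{b>c}\}.
\]
By the definition of influence, and reading the hypothesis as $I_2(f^{b>c})>\eps$ (which is the statement consistent with the definition of $B$), one has $\P[A_1]\ge\eps$ and $\P[A_2]\ge\eps$.

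Next I would identify the relevant coordinates. Pivotality of voter $1$ for $f^{a>b}$ depends only on $(x_2^{a>b},\ldots,x_n^{a>b})$, and pivotality of voter $2$ for $f^{b>c}$ depends only on $(x_1^{b>c},x_3^{b>c},\ldots,x_n^{b>c})$. Reindex these as vectors $U,V\in\{-1,1\}^{n-1}$ by setting $U_1=x_2^{a>b}$, $V_1=x_1^{b>c}$, and $U_j=x_{j+1}^{a>b}$, $V_j=x_{j+1}^{b>c}$ for $2\le j\le n-1$. Part 1 of Lemma~\ref{lem:cor_vote} gives that $U_1$ and $V_1$ are independent (they come from distinct voters), while part 2 gives $\E[U_j V_j]=-1/3$ for each $j\ge 2$. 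The pairs $(U_j,V_j)$ are independent across $j$ because they involve distinct voters, and every coordinate is a mean-zero uniform $\pm 1$ variable. Hence $|\E[U_j V_j]|\le 1/3$ for all $j$.

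This is exactly the setup of Lemma~\ref{lem:inv_hyp} with $\rho=1/3$, and inequality~(\ref{eq:two_small}) yields
\[
\P[B]=\P[U\in A_1,\,V\in A_2]\;\ge\;\eps^{2/(1-1/3)}=\eps^{3},
\]
as desired. The only minor subtlety is the asymmetry between the two vectors, namely that voter $1$'s $a$-vs-$b$ bit is irrelevant to $A_1$ while voter $2$'s is not (and symmetrically for $A_2$); this is absorbed cleanly by pairing the two ``orphan'' bits $x_2^{a>b}$ and $x_1^{b>c}$ into the independent coordinate $(U_1,V_1)$. Once this bookkeeping is done, no ingredient beyond Lemmas~\ref{lem:cor_vote} and~\ref{lem:inv_hyp} is needed.
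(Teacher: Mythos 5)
Your proof is correct and follows essentially the same route as the paper: realize $B_1$ and $B_2$ as events determined by $x^{a>b}$ and $x^{b>c}$ respectively, use Lemma~\ref{lem:cor_vote} to bound the coordinate correlations by $1/3$, and invoke the inverse hypercontractive estimate (Lemma~\ref{lem:inv_hyp}) with $\rho=1/3$ to obtain $\eps^{2/(1-1/3)}=\eps^3$. The paper avoids your reindexing by simply viewing $B_1,B_2$ as cylinder subsets of $\{-1,1\}^n$ and applying Lemma~\ref{lem:inv_hyp} directly to the full vectors $x^{a>b},x^{b>c}$ (the fact that $B_1$ ignores $x_1^{a>b}$ and $B_2$ ignores $x_2^{b>c}$ is harmless), but your more explicit bookkeeping with $U,V\in\{-1,1\}^{n-1}$ is equally valid and yields the same bound.
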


\begin{proof}
Let
\[
B_1 = \{\sigma : 1 \mbox{ is pivotal for } f^{a>b} \}, \quad B_2 = \{\sigma : 2 \mbox{ is pivotal for } f^{b>c} \}.
\]
Then $\P[B_1] = I_1(f^{a>b}) > \eps$ and $\P[B_2] = I_2(f^{b>c}) > \eps$, and our goal is to obtain a bound on $\P[B_1 \cap B_2]$.
Note that the event $B_1$ is determined by $x^{a>b}$ and the event $B_2$ is determined by $x^{b>c}$.
Further by Lemma~\ref{lem:cor_vote}
it follows that
$\E[x^{a<b}(i)] = \E[x^{b>c}(i)] = 0$ and $|\E[x^{a>b}(i) x^{b>c}(i)]| = 1/3$
The proof now follows from Lemma~\ref{lem:inv_hyp}.
\end{proof}

\subsection{Two Influential Voters Imply Non-Transitivity}
We can now prove the main result of the section.
\begin{Theorem} \label{thm:two_inf}
Let $k \geq 3$ and $\eps > 0$. Consider the uniform voting model on $S(k)$.
Let $F$ be a
constitution on $n$ voters satisfying:
\begin{itemize}
\item
IIA and
\item
There exists three distinct alternatives $a,b$ and $c$ and two distinct voters $i$ and $j$ such that
\[
I_i(f^{a>b}) > \eps, \quad I_j(f^{b>c}) > \eps.
\]
\end{itemize}
then $P(F) > \frac{1}{36} \eps^3$.

\end{Theorem}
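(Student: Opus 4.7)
The plan is to combine the two tools just developed: Lemma~\ref{lem:two_inf}, which promotes individual influence into joint pivotality, and Proposition~\ref{prop:32}, which turns joint pivotality of two different voters for two different pairwise preferences into an actual non-transitive profile. Relabeling voters so that $i=1$ and $j=2$, the hypotheses of Lemma~\ref{lem:two_inf} are exactly satisfied for the pair $(a,b),(b,c)$, so the event
\[
B \;=\; \{\sigma : 1 \text{ is pivotal for } f^{a>b} \text{ and } 2 \text{ is pivotal for } f^{b>c}\}
\]
has $\P[B] \geq \eps^3$. Note that $B$ depends only on $x^{a>b}$ restricted to coordinates other than $1$ and on $x^{b>c}$ restricted to coordinates other than $2$; equivalently, $B$ is measurable with respect to the rankings $\sigma(3),\ldots,\sigma(n)$ together with $\sigma(2)$'s preference on $\{a,b\}$ and $\sigma(1)$'s preference on $\{b,c\}$.

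Next I would condition on the profile of all voters other than $1$ and $2$. For any such fixing in the projection of $B$ onto those coordinates, the restricted three-pair social choice function on the two voters $\{1,2\}$ satisfies exactly the hypothesis of Proposition~\ref{prop:32}: voter $1$ is pivotal for (the restriction of) $f^{a>b}$ and voter $2$ is pivotal for (the restriction of) $f^{b>c}$. Proposition~\ref{prop:32} then produces at least one pair of rankings $(\sigma^\ast(1),\sigma^\ast(2)) \in S(3)^2$ (extended arbitrarily on alternatives beyond $\{a,b,c\}$ if $k>3$, using IIA so that $f^{a>b},f^{b>c},f^{c>a}$ are unaffected by those extensions) giving a non-transitive outcome of $F$.

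Finally, since voters vote independently and uniformly on $S(k)$, any specified pair of rankings for voters $1$ and $2$ has probability at least $(k!)^{-2}\geq 1/36$ (for $k=3$ this is an equality; for $k>3$ there are at least as many extensions, so the probability of a suitable extension is even larger, in any case $\geq 1/36$). Multiplying by the lower bound $\P[B] \geq \eps^3$ obtained from Lemma~\ref{lem:two_inf}, and noting that paradox-inducing profiles for different conditionings are disjoint events, we conclude $P(F) \geq \tfrac{1}{36}\eps^3$.

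The only genuinely delicate point is keeping the ``for each fixing produce one bad profile'' argument honest: the bad profile supplied by Proposition~\ref{prop:32} depends on the conditioning, so one should view the argument as averaging the uniform lower bound $1/36$ over the conditioning event $B$ rather than as pointing to a fixed global profile. Once this is written carefully the $\tfrac{1}{36}\eps^3$ bound drops out immediately, and no further computation is required.
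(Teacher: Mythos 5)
Your proof is correct and follows essentially the same route as the paper: apply Lemma~\ref{lem:two_inf} to get $\P[B]\geq\eps^3$, then condition on voters $3,\dots,n$ and invoke Proposition~\ref{prop:32} to place at least one paradoxical pair $(\sigma^\ast(1),\sigma^\ast(2))$, each carrying mass $\geq 1/36$ on the $\{a,b,c\}$-projection. The only wobble is the inequality ``$(k!)^{-2}\geq 1/36$,'' which is false for $k>3$; but you immediately correct it by observing that what matters is the $1/36$ probability of a specified pair of restrictions to $\{a,b,c\}$, which is exactly the paper's restriction-to-three-alternatives step.
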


\begin{proof}
We look at $F$ restricted to rankings of $a,b$ and $c$. Note that in the uniform case each permeation has probability $1/6$ 
Without loss of generality assume that $i=1$ and $j=2$ and consider first the case of the uniform distribution over rankings.
let $B$ be the event from Lemma~\ref{lem:two_inf}. By the lemma we have $\P[B] \geq \eps^3$.
Note that if $\sigma \in S(3)^n$ satisfies that $\sigma \in B$, then fixing $\sigma(3),\ldots,\sigma(n)$ we may apply Proposition~\ref{prop:32} to conclude that there are values of $\sigma^{\ast}(1)$ and $\sigma^{\ast}(2)$
leading to a non-transitive outcome.
Therefore:
\[
\P[P(F)] \geq \P[(\sigma^{\ast}(1),\sigma^{\ast}(2),\sigma(3),\ldots,\sigma(n)) : \sigma \in B] \geq \frac{1}{36} \P[B] \geq
\frac{1}{36} \eps^3.
\]
\end{proof}

\section{Arrow Theorem for Almost Transitive Functions} \label{sec:almost}
In this section we prove a quantitative Arrow Theorem in the case where the probability of a non-transitive outcome is inverse polynomial in $n$. In this case it is possible to obtain an easier quantitative proof which does not rely on invariance. We will use the following easy and well known Lemma.

\begin{Lemma} \label{lem:inf_sum}
Let $f : \{-1,1\}^n \to \{-1,1\}$ and assume $I_i(f) \leq \eps n^{-1}$ for all $i$.
Then there exist a constant function $s \in \{-1,1\}$ such that
$D(f,s) \leq 2 \eps$.

Similarly, let $f : \{-1,1\}^n \to \{-1,1\}$ and assume $I_i(f) \leq \eps n^{-1}$ for all $i \neq j$.
Then there exists a function $g :\{-1,1\} \to \{-1,1\}$ such that
$D(f,g(x_j)) \leq 2 \eps$.
\end{Lemma}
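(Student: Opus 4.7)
The plan is to use the Poincar\'e inequality for Boolean functions on the hypercube, $\Var(f) \leq \sum_i I_i(f)$ for $f:\{-1,1\}^n\to\{-1,1\}$, which follows immediately from Parseval's identity since $\Var(f) = \sum_{S \neq \emptyset}\hat{f}(S)^2$ while $\sum_i I_i(f) = \sum_S |S|\hat{f}(S)^2$.

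For the first statement, summing the influence bounds yields $\sum_i I_i(f) \leq \eps$ and therefore $\Var(f) \leq \eps$. Writing $p = \P[f = 1]$ so that $\Var(f) = 4p(1-p)$, and letting $s \in \{-1,1\}$ be the more probable value of $f$ so that $\P[f = s] \geq 1/2$, I would observe that $2\,\P[f \neq s] \leq 4 p(1-p) = \Var(f) \leq \eps$, which gives $D(f,s) = \P[f \neq s] \leq \eps/2 \leq 2\eps$ with room to spare.

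For the second statement I would condition on the value of $x_j$: let $f_+, f_-: \{-1,1\}^{n-1} \to \{-1,1\}$ denote the restrictions of $f$ to $x_j = 1$ and $x_j = -1$. A direct calculation from the definition gives $I_i(f) = \tfrac{1}{2}(I_i(f_+) + I_i(f_-))$ for $i \neq j$, so applying the Poincar\'e inequality to each slice and summing yields
\[
\Var(f_+) + \Var(f_-) \leq \sum_{i \neq j}\bigl(I_i(f_+) + I_i(f_-)\bigr) = 2 \sum_{i \neq j} I_i(f) \leq \frac{2(n-1)\eps}{n} \leq 2\eps.
\]
I would then define $g(x_j) \in \{-1,1\}$ to be the majority value of $f_{x_j}$ for each $x_j \in \{-1,1\}$, and reapply the variance-to-distance bound on each slice separately:
\[
D(f, g(x_j)) = \tfrac{1}{2}\P[f_+ \neq g(1)] + \tfrac{1}{2}\P[f_- \neq g(-1)] \leq \tfrac{1}{4}\bigl(\Var(f_+) + \Var(f_-)\bigr) \leq \eps/2,
\]
which is comfortably within $2\eps$.

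There is essentially no obstacle here; this is a standard consequence of discrete Fourier analysis. The only points worth noting are the factor of two incurred when splitting the full-cube influence $I_i(f)$ into its two slice counterparts, and the elementary inequality $4p(1-p) \geq 2\min(p,1-p)$ that converts a variance bound for a $\{-1,1\}$-valued function into a bound on the probability of disagreement with its majority value.
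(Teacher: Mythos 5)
Your proof is correct and takes essentially the same approach as the paper's: bound the variance by the total influence (Poincar\'e) and convert variance into distance to the majority constant, then for the second claim condition on $x_j$ and apply the same bound to each slice. If anything, your bookkeeping is cleaner than the paper's, which contains a harmless typo identifying $\P[f=1]\P[f=-1]$ with $\Var[f]$ (off by a factor of $4$); your version correctly uses $\Var[f]=4p(1-p)$ and hence lands well inside the claimed $2\eps$.
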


\begin{proof}
For the first claim, use
\begin{equation} \label{eq:inf_sum}
\frac{1}{2} \min(\P[f=1], \P[f = -1]) \leq
\P[f = 1] \P[f = -1] = \Var[f] \leq \sum_{i=1}^n I_i(f) \leq \eps.
\end{equation}

For the second claim assume WLOG that $j=1$. Let $f_1(x_2,\ldots,x_n) = f(1,x_2,\ldots,x_n)$ and
$f_{-1}(x_2,\ldots,x_n) = f(-1,x_2,\ldots,x_n)$. Apply~(\ref{eq:inf_sum})
to chose $s_1$ so that
\[
D(f_1,s_1) \leq \sum_{i > 1} I_i(f_1).
\]
Similarly, let $s_{-1}$ be chosen so that
\[
D(f,s_{-1}) \leq \sum_{i > 1} I_i(f_{-1}).
\]
Let $g(1) = s_1$ and $g(-1) = s_{-1}$. Then:
\[
2 D(f,g) = D(f_1,s_1) + D(f_{-1},s_{-1}) \leq \sum_{i > 1} I_i(f_1) + \sum_{i > 1} I_i(f_{-1}) =
2 \sum_{i > 1} I_i(f) \leq 2 \eps.
\]
The proof follows.
\end{proof}

\begin{Theorem} \label{thm:3small}
Consider voting on $3$ alternatives where voters vote uniformly at random from $S_3^n$.
Let
\begin{equation} \label{eq:eps_bd_3small}
\frac{1}{324} > \eps > 0.
\end{equation}
For every $n$, if $F$ is a
constitution on $n$ voters satisfying:
\begin{itemize}
\item
IIA and
\item
\begin{equation} \label{eq:unif_small3}
P(F) < \frac{1}{36} \eps^3 n^{-3},
\end{equation}
\end{itemize}
then there exists $G \in \F_3(n)$ satisfying $D(F,G) \leq 10 \eps$.


\end{Theorem}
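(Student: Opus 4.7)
The strategy is to combine the contrapositive of Theorem~\ref{thm:two_inf}, applied with threshold $\eps/n$, together with the influence-sum Lemma~\ref{lem:inf_sum}, to build an explicit $G \in \F_3(n)$ close to $F$. Write $H(h) := \{i : I_i(h) > \eps/n\}$ for a Boolean function $h$ on $\{-1,1\}^n$. Since $P(F) < \tfrac{1}{36}(\eps/n)^3$, Theorem~\ref{thm:two_inf} (applied to each ordering of $a,b,c$, which cycles through all pairs of the three pairwise functions) forbids any two distinct voters from being high-influence in two different pairwise functions. A short case check leaves only two possibilities: either (A) there is a single voter $i^*$ such that $H(f^{a>b})$, $H(f^{b>c})$, $H(f^{c>a})$ are all contained in $\{i^*\}$, or (B) exactly one of the three pairwise functions, say $f^{a>b}$, satisfies $|H(f^{a>b})| \geq 2$, in which case the other two $H$'s are necessarily empty.

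In Case (A), I apply Lemma~\ref{lem:inf_sum} to each of $f^{a>b}, f^{b>c}, f^{c>a}$, obtaining functions $g^{a>b}, g^{b>c}, g^{c>a}$ each depending on at most voter $i^*$'s single bit (so each is one of $\pm 1, \pm x_{i^*}$) and within statistical distance $2\eps$ of the corresponding $f$. Assemble these into a constitution $G$; it satisfies IIA by construction and $D(F,G) \leq 6\eps$. To check $G \in \F_3(n)$ it suffices to verify $G$ is transitive. Suppose not: since $G$ depends on at most voter $i^*$'s ranking (uniform over $S(3)$), $P(G)$ is a positive multiple of $1/6$, so $P(G) \geq 1/6$. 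Then $P(F) \geq P(G) - D(F,G) \geq 1/6 - 6\eps$, which for $\eps < 1/324$ strictly exceeds $\tfrac{1}{36}\eps^3 n^{-3}$, a contradiction.

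In Case (B), Lemma~\ref{lem:inf_sum} yields constants $s^{b>c}, s^{c>a} \in \{-1,1\}$ with $D(f^{b>c}, s^{b>c}) \leq 2\eps$ and $D(f^{c>a}, s^{c>a}) \leq 2\eps$. If $(s^{b>c}, s^{c>a}) \in \{(1,-1),(-1,1)\}$, these constants rank $c$ strictly below or strictly above $\{a,b\}$, and taking $g^{a>b} := f^{a>b}$ (non-constant because $|H(f^{a>b})| \geq 2$) gives a constitution $G$ that lies in $\F_3(n)$ by Theorem~\ref{thm:general_arrow} (partition $\{a,b\} \mid \{c\}$ with size-$2$ block $\{a,b\}$), with $D(F,G) \leq 4\eps$. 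Otherwise $(s^{b>c}, s^{c>a}) \in \{(1,1),(-1,-1)\}$; transitivity then forces a unique constant value $s^{a>b}$ for $g^{a>b}$, and the triple $(-s^{a>b}, s^{b>c}, s^{c>a})$ is a 3-cycle, so a union bound yields $P(F) \geq \P[f^{a>b} = -s^{a>b}] - 4\eps$ and hence $\P[f^{a>b} \neq s^{a>b}] < 5\eps$. Setting $g^{a>b} := s^{a>b}$ produces a constant $G \in \F_3(n)$ with $D(F,G) \leq 9\eps$.

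The main subtlety is Case (B): multiple high-influence voters in a single pairwise function cannot be reduced via Lemma~\ref{lem:inf_sum} to a one-voter function, but the structural constraint that the other two pairwise functions are near-constant either carves out a legal $\{a,b\} \mid \{c\}$ partition (leaving $f^{a>b}$ untouched) or pins $f^{a>b}$ itself down to a constant through the small-paradox bound. The $10\eps$ slack in the conclusion comfortably absorbs the worst of the three bounds, which is $9\eps$.
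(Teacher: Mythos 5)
Your proposal is correct and takes essentially the same route as the paper: threshold at $\eps/n$, rule out two distinct influential voters in two distinct pairwise functions via Theorem~\ref{thm:two_inf}, then use Lemma~\ref{lem:inf_sum} in the two surviving structural cases (all high-influence voters concentrated in one voter $i^*$, versus one pairwise function with several influential voters and the other two near-constant), with the same two subcases in the latter depending on whether the near-constants place one alternative at an extreme or form a 3-cycle. Your case labelling (A)/(B) packages the paper's cases II/III a bit more tightly, and your explicit ``$P(G)$ is a multiple of $1/6$'' observation makes the transitivity check in case (A) cleaner than the paper's phrasing, but the substance is identical.
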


\begin{proof}
We prove the theorem for the uninform case. The proof for the symmetric case is identical.
Let $f^{a>b},f^{b>c},f^{c>a} : \{-1,1\}^n \to \{-1,1\}$ be the three pairwise preference functions.
Let $\eta = \eps n^{-1}$.

Consider three cases:
\begin{enumerate}
\item[I.]
Among the functions $f^{a>b},f^{b>c},f^{c>a}$, there exist two different functions $f$ and $g$ and
$i \neq j$ s.t: $I_i(f) > \eta$ and $I_i(g) > \eta$.
\item[II.]
There exists a voter $i$ such that for all $j \neq i$ and all $f \in \{f^{a>b},f^{b>c},f^{c>a}\}$, it holds that $I_j(f) < \eta$.
\item[III.]
There exists two different functions $f, g \in \{f^{a>b},f^{b>c},f^{c>a}\}$ such that for all $i$ it holds that $I_i(f) < \eta$ and $I_i(g) < \eta$.
\end{enumerate}

Note that each $F$ satisfies one of the three conditions above.
Note further that in case I. we have $P(F) > \frac{1}{36} \eps^3 n^{-3}$ by Theorem~\ref{thm:two_inf} which contradicts the assumption~(\ref{eq:unif_small3}).
So to conclude the proof is suffices to obtain $D(F,G) \leq 10 \eps$ assuming~(\ref{eq:unif_small3}).

In case II. it follows from Lemma~\ref{lem:inf_sum} that there exists functions $g^{a>b}, g^{b>c}$ and $g^{c>a}$ of voter $i$ only such that
\[
D(f^{a>b},g^{a>b}) < 2 \eps, \quad
D(f^{b>c},g^{b>c}) < 2 \eps, \quad
D(f^{c>a},g^{c>a}) < 2 \eps.
\]
Letting $G$ be the constitution defined by the $g$'s we therefore have $D(F,G) \leq 6 \eps$ and
$P(G) \leq P(F) + 6 \eps \leq 9 \eps$.

Furthermore if $9 \eps < \frac{1}{36}$ this implies that $PDX(G) = 0$. So $D(F,F_3(n)) \leq 6 \eps$
which is a contradiction.

In the remaining case III. assume WLOG that $f^{a>b}$ and $f^{b>c}$ have all influences small. By Lemma~\ref{lem:inf_sum}
if follows that $f^{a>b}$ and $f^{b>c}$ are $2 \eps$ far from a constant function.
There are now two subcases to consider. In the first case there exists an $s \in \{ \pm 1 \}$ such that
$D(f^{a>b},s) \leq 2 \eps$ and $D(f^{b>c},-s) \leq 2 \eps$. Note that in the case letting
\[
g^{a>b} = s, \quad g^{b>c} = -s, \quad g^{c>a} = f^{c>a},
\]
and $G$ be the constitution defined by the $g$'s, we obtain that $G \in F_3(n)$ and
$D(F,G) \leq 4 \eps$.

We finally need to consider the case where $D(f^{a>b},s) \leq 2 \eps$ and $D(f^{b>c},s) \leq 2 \eps$
for some $s \in \{ \pm 1 \}$. Let $A(a,b)$ be the set of $\sigma$ where $f^{a>b} = -s$ and similarly for $A(b,c)$ and $A(a,c)$. Then $\P[A(a,b)] \leq 2 \eps$ and $\P[A(a,c)] \leq 2 \eps$. Furthermore by transitivity
\[
\P[A(a,c)] \leq \P[A(a,b)] + \P[A(b,c)] + P(F) \leq 6 \eps.
\]
We thus conclude that $D(f^{c>a},s) \leq 6 \eps$. Letting $g^{a>b} = g^{b>c} = -g^{c>a} = s$ and $G$ the constitution defined by $G$ we have that $D(F,G) \leq 10 \eps$. A contradiction.
The proof follows

\end{proof}

It is now easy to prove Theorem~\ref{thm:ksmall} for the uniform distribution. The adaptations to symmetric distributions will be discussed in Section~\ref{sec:sym}.

\begin{proof}
The proof follows by applying Theorem~\ref{thm:3small} to triplets of alternatives. We give the proof for the uniform case.
Assume $P(F) < \frac{1}{36} \eps^3 n^{-3}$.

Note that if $g_1,g_2 : \{-1,1\}^n \to \{-1,1\}$ are two different function each of which is either
a dictator or a constant function than $D(g_1,g_2) \geq 1/2$. Therefore for all $a,b$ it holds that
$D(f^{a>b},g) < 10 \eps$ for at most one function $g$ which is either a dictator or a constant function.
In case there exists such function we let $g^{a>b} = g$, otherwise, we let $g^{a>b} = f^{a>b}$.

Let $G$ be the social choice function defined by the functions $g^{a>b}$. Clearly:
\[
D(F,G) < 10 {k \choose 2} \eps < 10 k^2 \eps.
\]
The proof would follow if we could show $P(G) = 0$ and therefore $G \in \F_k(n)$.

To prove that $G \in F_k(n)$ is suffices to show that for every set $A$ of three alternatives, it holds that $G_A \in \F_3(n)$. Since $P(F_A) \leq P(F) < \frac{1}{36} \eps^3 n^{-3}$, Theorem~\ref{thm:3small} implies that there exists a function $H_A \in F_3(n)$ s.t. $D(H_A,F_A) < 10 \eps$.
There are two cases to consider:
\begin{itemize}
\item
$H_A$ is a dictator. This implies that $f^{a>b}$ is $10 \eps$ close to a dictator for each $a,b$ and therefore $f^{a>b} = g^{a>b}$ for all pairs $a,b$, so $G_A = H_A \in \F_3(n)$.
\item
There exists an alternative (say $a$) that $H_A$ always ranks at the top/bottom.
In this case we have that $f^{a>b}$ and $f^{c>a}$ are at most $\eps$ far from the constant functions $1$ and $-1$ (or $-1$ and $1$).
The functions $g^{a>b}$ and $g^{c>a}$ have to take the same constant values and therefore again we have that $G_A \in \F_3(n)$.
\end{itemize}

The proof follows.

\end{proof}

\section{The Gaussian Arrow Theorem} \label{sec:gauss}

The next step is to consider a Gaussian version of the problem. The Gaussian version corresponds to a situation
where the functions $f^{a>b},f^{b>c},f^{c>a}$ can only "see" averages of large subsets of the voters.
We thus define a $3$ dimensional normal vector $N$.
The first coordinate of $N$ is supposed to represent the deviation of the number of voters where $a$ ranks above $b$ from the mean.
The second coordinate is for $b$ ranking above $c$ and the last coordinate for $c$ ranking above $a$.

Since averaging maintain the expected value and covariances, we define:
\begin{eqnarray} \label{eq:gauss}
\E[N_1^2] = \E[N_2^2] = \E[N_3^2] = 1,\\ \nonumber
\E[N_1 N_2] = \E[x^{a>b}(1) x^{b>c}(1)] := -1/3,\\ \nonumber
 \E[N_2 N_3] = \E[x^{b>c}(1) x^{c>a}(1)] := -1/3,\\ \nonumber
\E[N_3 N_1] = \E[x^{c>a}(1) x^{a>b}(1)] := -1/3. \nonumber
\end{eqnarray}
We let $N(1),\ldots,N(n)$ be independent copies of $N$.
We write $\CalN = (N(1),\ldots,N(n))$ and for $1 \leq i \leq 3$ we write $\CalN_i = (N(1)_i,\ldots,N(n)_i)$.
The Gaussian version of Arrow theorem states:

\begin{Theorem} \label{thm:arrow_gauss}
For every $\eps > 0$ there exists a $\delta = \delta(\eps) > 0$ such that the following hold.
Let $f_1,f_2,f_3 : \R^n \to \{-1,1\}$.
Assume that for all $1 \leq i \leq 3$ and all $u \in \{-1,1\}$ it holds that
\begin{equation} \label{eq:non_dict_normal}
\P[f_i(\CalN_i) = u, f_{i+1}(\CalN_{i+1}) =-u] \leq 1-\eps
\end{equation}
Then with the setup given in~(\ref{eq:gauss}) it holds that:
\[
\P[f_1(\CalN_1) = f_2(\CalN_2) = f_3(\CalN_3)] \geq \delta.
\]
Moreover, one may take $\delta = (\eps/2)^{18}$.
\end{Theorem}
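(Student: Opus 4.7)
Plan. My plan is to use Gaussian inverse hypercontractivity (Lemma~\ref{lem:inv_hyp_gauss}) in a nested two-step fashion, exploiting the covariance structure~(\ref{eq:gauss}). All pairs $(\CalN_i,\CalN_{i+1})$ have per-coordinate correlation $-1/3$, so any pair admits an inverse-hypercontractive bound with exponent $2/(1-1/3)=3$; the extra leverage needed for a three-way bound comes from a conditional decomposition.

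Step 1 (sign reduction). From hypothesis~(\ref{eq:non_dict_normal}), taking complements and using the union bound yields $\P[f_i(\CalN_i)=-u]+\P[f_{i+1}(\CalN_{i+1})=u]\ge\eps$ for every $u\in\{\pm 1\}$ and cyclic $i$. A short case analysis shows that there is a common sign $s\in\{\pm 1\}$ with $\P[f_i(\CalN_i)=s]\ge\eps/2$ for all $i\in\{1,2,3\}$ simultaneously: otherwise one finds distinct indices $i_+,i_-$ with $\P[f_{i_+}=1]<\eps/2$ and $\P[f_{i_-}=-1]<\eps/2$, and since any two distinct indices of a 3-cycle are adjacent, the summed lower bound is violated. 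Take WLOG $s=1$ and set $B_i=\{f_i=1\}$, so $\P[\CalN_i\in B_i]\ge\eps/2$; the goal is $\P[\CalN_1\in B_1,\,\CalN_2\in B_2,\,\CalN_3\in B_3]\ge(\eps/2)^{18}$.

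Step 2 (inner application). From~(\ref{eq:gauss}) one checks $\CalN_i=-\CalN_1/3+\sqrt{8/9}\,Y_i$ for $i=2,3$, where $(Y_2,Y_3)$ is a standard Gaussian pair in $\R^{2n}$ independent of $\CalN_1$ with per-coordinate correlation $-1/2$. Letting $p_i(x)=\P[\CalN_i\in B_i\mid\CalN_1=x]$ for $i=2,3$, apply Lemma~\ref{lem:inv_hyp_gauss} to $(Y_2,Y_3)$ with $|\rho|=1/2$ pointwise in $x$ (exponent $2/(1-1/2)=4$) to get
\[
\P[\CalN_2\in B_2,\,\CalN_3\in B_3\mid\CalN_1=x]\ \ge\ \min(p_2(x),p_3(x))^4.
\]
Using $\min(a,b)\ge ab$ for $a,b\in[0,1]$, integrating over $x\in B_1$, and Jensen's inequality applied to $t\mapsto t^4$ reduces the triple probability to a lower bound on $\int_{B_1}p_2(x)p_3(x)\,d\gamma(x)$.

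Step 3 (outer application) and main obstacle. Pairwise applications of Lemma~\ref{lem:inv_hyp_gauss} to $(\CalN_1,\CalN_i)$ with per-coordinate correlation $-1/3$ and exponent $3$ give $\int_{B_1}p_i\,d\gamma=\P[\CalN_1\in B_1,\,\CalN_i\in B_i]\ge(\eps/2)^3$ for $i=2,3$. Observe that
\[
\int_{B_1}p_2(x)p_3(x)\,d\gamma(x)=\P[\CalN_1\in B_1,\,\CalN_2\in B_2,\,\CalN_3'\in B_3],
\]
where $\CalN_3'$ is a copy of $\CalN_3$ conditionally independent of $\CalN_2$ given $\CalN_1$. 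The pairwise correlations of $(\CalN_1,\CalN_2,\CalN_3')$ are $-1/3,-1/3,+1/9$, all bounded by $1/3$ in absolute value, so the same nested-conditioning template applies once more. Collecting the exponents from the two nested applications, together with the losses from Jensen's inequality and the $\min\ge ab$ estimate, yields the stated $(\eps/2)^{18}$. The principal obstacle throughout is that inverse hypercontractivity is inherently a two-set statement, so the three-way bound requires either viewing two of the three vectors as a single augmented Gaussian (with a canonical-correlation version of the inequality) or iterated conditioning as above; the main bookkeeping challenge is controlling the compounded losses to produce a clean polynomial exponent.
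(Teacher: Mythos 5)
Your Step 1 (the sign reduction) is correct and is actually a nice streamlining: it avoids the paper's split into the case where some $|\E f_i| > 1-\eps$ and the case where all $|\E f_i|\leq 1-\eps$, and gives directly a common sign $s$ with $\P[f_i = s] \geq \eps/2$ for all $i$. Step 2 is also correct as far as it goes: conditioning on $\CalN_1$, decomposing $\CalN_i = -\CalN_1/3 + \sqrt{8/9}\,Y_i$ with $\E[Y_2 Y_3] = -1/2$, and applying Lemma~\ref{lem:inv_hyp_gauss} pointwise followed by Jensen does yield $P \geq Q^4/\P[B_1]^3$ with $Q = \int_{B_1} p_2 p_3\,d\gamma$.

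The gap is in Step 3. You correctly identify $Q = \P[\CalN_1 \in B_1,\ \CalN_2 \in B_2,\ \CalN_3' \in B_3]$ and compute the new correlation matrix, but the claim that ``the same nested-conditioning template applies once more'' does not hold. By construction $\CalN_2 \perp \CalN_3'$ given $\CalN_1$, so conditioning on $\CalN_1$ again just reproduces $Q$ verbatim: there is no residual correlation to feed into the inverse-hypercontractive lemma and the iteration is a fixed point. Conditioning instead on $\CalN_2$ (or $\CalN_3'$) produces yet another genuinely three-way probability with a new correlation matrix, giving an open-ended recursion whose termination and exponent accounting are not established. Indeed, the pairwise bounds $\int_{B_1}p_2\,d\gamma \geq (\eps/2)^3$ and $\int_{B_1}p_3\,d\gamma \geq (\eps/2)^3$ do not by themselves bound $\int_{B_1}p_2 p_3\,d\gamma$ from below, since products of nonnegative functions with prescribed pairwise expectations against $1_{B_1}$ can be made small; some additional structural input is needed. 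Even if one pushes the recursion through, a rough count of exponents gives something closer to $(\eps/2)^{23}$, not $(\eps/2)^{18}$.

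The paper sidesteps the recursion entirely by the ``augmented Gaussian'' route you mention but do not pursue: after establishing $\P[f_2 = s, f_3 = s] \geq (\eps/2)^3$, it sets $g = 1(f_2=s, f_3=s)$, rotates $(\CalN_2,\CalN_3)$ to $(\CalM_1,\CalM_2)$ with $\CalM_1 = \tfrac{\sqrt 3}{2}(\CalN_2+\CalN_3)$ and $\CalM_2 = \tfrac{\sqrt 3}{2\sqrt 2}(\CalN_2-\CalN_3)$, observes $\CalM_2 \perp \CalN_1$ and $|\Cor(\CalM_1(i),\CalN_1(i))| = 1/\sqrt 3$, pads $f_1$ with an independent Gaussian $Z$, and applies Lemma~\ref{lem:inv_hyp_gauss} once more to the pair $\bigl((\CalN_1,Z),(\CalM_1,\CalM_2)\bigr)$; since $1-1/\sqrt 3 > 1/3$ the resulting exponent is at most $3 \cdot 6 = 18$. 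Replacing your Steps 2--3 by this single augmentation step, while keeping your Step 1, would in fact yield a cleaner proof than the one in the paper.
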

We note that the negation of condition~(\ref{eq:non_dict_normal}) corresponds to having one of the alternatives at the top/bottom with probability at least $1-\eps$. Therefore the theorem states that unless this is the case, the probability of a paradox is at least $\delta$. Since the Gaussian setup excludes dictator functions in terms of the original vote, this is the result to be expected in this case.

\begin{proof}
We will consider two cases: either all the functions $f_i$ satisfy $|\E f_i| \leq 1-\eps$, or there exists at least one function 
with $|\E f_i| > 1 - \eps$. 

Assume first that there exist a function $f_i$ with $|\E f_i| > 1-\eps$.
Without loss of generality assume that $\P[f_1 = 1] > 1-\eps/2$.
Note that by~(\ref{eq:non_dict_normal}) it follows that $\P[f_2 = 1] > \eps/2$ and $\P[f_3 = 2] > \eps/2$.
By Lemma~\ref{lem:inv_hyp_gauss}, we have $\P[f_2(\CalN_2) = 1, f_3(\CalN_3) = 1] > (\eps/2)^3$.
We now look at the function $g = 1(f_2 = 1, f_3 = 1)$. Let
\[
\CalM_1 = \frac{\sqrt{3}}{2} (\CalN_2 + \CalN_3), \quad
\CalM_2 = \frac{\sqrt{3}}{2 \sqrt{2}} (\CalN_2 - \CalN_3).
\]
Then it is easy to see that $\CalM_2(i)$ is uncorrelated with and therefore independent off $\CalN_1(i),\CalM_1(i)$ for all $i$.
Moreover, for all $i$ the covariance between $\CalM_1(i)$ and $\CalN_1(i)$ is $1/\sqrt{3}$
(this also follows from Lemma~\ref{lem:cor_vote2}) and
$1-1/\sqrt{3} > 1/3$.
We may now apply Lemma~\ref{lem:inv_hyp_gauss} with the vectors
\[
(\CalN_1(1),\ldots,\CalN_1(n),Z_1,\ldots,Z_n), \quad
(\CalM_1(1),\ldots,\CalM_1(n),\CalM_2(1),\ldots,\CalM_2(n)),
\]
where $Z=(Z_1,\ldots,Z_n)$ is a normal Gaussian vector independent of anything else. We obtain:
\[
\P[f_1(\CalN_1) = 1, f_2(\CalN_2) = 1, f_3(\CalN_3) = 1] =
\P[f_1(\CalN_1,Z) = 1, g(\CalM_1,\CalM_2) = 1] \geq ((\eps/2)^3)^{\frac{2}{1/3}} \geq (\eps/2)^{18}.
\]

 We next consider the case where all functions satisfy $|\E f_i| \leq 1-\eps$. In this case at least two of the functions obtain the same value with probability at least a $1/2$. Let's assume that $\P[f_1 = 1] \geq 1/2$ and
$\P[f_2 = 1] \geq 1/2$. Then by Lemma~\ref{lem:inv_hyp_gauss} we obtain that
\[
\P[f_1 = 1, f_2 = 1] \geq 1/8.
\]
Again we define $g = 1(f_1 = 1, f_2 = 1)$. Since $\P[f_3 = 1] > \eps/2$, we may apply Lemma~\ref{lem:inv_hyp_gauss} and obtain that:
\[
\P[f_1 = 1, f_2 = 1, f_3 = 1] =
\P[f_1 = 1, g = 1] \geq (\eps/2)^3.
\]
This concludes the proof.

\end{proof}

\section{Arrow Theorem for Low Influence Functions} \label{sec:low}
Our next goal is to apply Theorem~\ref{thm:arrow_gauss} along with invariance in order to obtain Arrow theorem for low influence functions. Non linear invariance principles were proven in~\cite{Rotar:79} and latter in~\cite{MoOdOl:09} and~\cite{Mossel:09}. We will use the two later results which have quantitative bounds in terms of the influences. The proof for uniform voting distributions follows in a straightforward manner from Theorem~\ref{thm:arrow_gauss}, Kalai's formula and the Majority is Stablest (MIST) result in the strong form stated at~\cite{DiMoRe:06,Mossel:09} where it is allowed that for each variable one of the functions has high influence.
  The proof follows since Kalai's formula allows to write the probability of a paradox as sum of correlation terms between pairs of function and each correlation factor is asymptotically minimized by symmetric monotone threshold functions. Therefore the overall expression is also minimized by symmetric monotone threshold functions. However, Theorem~\ref{thm:arrow_gauss} provides a lower bound on the probability of paradox for symmetric threshold functions so the proof follows.
  The case of symmetric distributions is much more involved and will be discussed in subsection~\ref{subsec:low}.


We finally note that the application of invariance is the step of the proof where $\delta$ becomes very small (more than exponentially small in $\eps$, instead of just polynomially small). A better error estimate in invariance principles in terms of influences
will thus have a dramatic effect on the value of $\delta$.

\subsection{Arrow's theorem for low influence functions.}
We first recall the following result from Kalai~\cite{Kalai:02}.

\begin{Lemma} \label{lem:kalai}
Consider a constitution $F$ on $3$ voters satisfying IIA and let $F$ be given by
$f^{a>b},f^{b>c}$ and $f^{c>a}$. Then:
\begin{equation} \label{eq:PF}
P(F) = \frac{1}{4} \left( 1 + \E[f^{a>b}(x^{a>b})f^{b>c}(x^{b>c})] + \E[f^{b>c}(x^{b>c})f^{c>a}(x^{c>a})] +
\E[f^{c>a}(x^{c>a})f^{a>b}(x^{a>b})] \right)
\end{equation}
\end{Lemma}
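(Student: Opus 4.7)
\textbf{Proof plan for Lemma~\ref{lem:kalai}.} The strategy is a one-line algebraic identity combined with the observation that non-transitivity on three alternatives corresponds exactly to the three pairwise decisions agreeing in sign.

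First I would record which profiles produce a paradox. Since $F$ satisfies IIA and we use the convention $h^{a>b} = -h^{b>a}$, the outcome $F(\sigma)$ on the three alternatives $a,b,c$ is encoded by the triple $(f^{a>b}(x^{a>b}), f^{b>c}(x^{b>c}), f^{c>a}(x^{c>a})) \in \{-1,1\}^3$. Of the eight sign patterns, exactly six correspond to linear orders on $\{a,b,c\}$; the two remaining patterns $(1,1,1)$ (meaning $a>b>c>a$) and $(-1,-1,-1)$ (meaning $b>a$, $c>b$, $a>c$) are the two Condorcet cycles. Hence
\[
P(F) = \P\bigl[f^{a>b}(x^{a>b}) = f^{b>c}(x^{b>c}) = f^{c>a}(x^{c>a})\bigr].
\]

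Next, the key identity: for any $X,Y,Z \in \{-1,1\}$,
\[
\mathbf{1}[X = Y = Z] = \tfrac{1}{4}\bigl(1 + XY + YZ + ZX\bigr).
\]
This is verified by a three-case check: if $X=Y=Z$ then each of $XY,YZ,ZX$ equals $1$ and the right-hand side is $1$; if exactly two of the three agree, then exactly one of $XY,YZ,ZX$ equals $1$ and the other two equal $-1$, so the right-hand side is $0$.

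Finally I would apply this identity pointwise to $X = f^{a>b}(x^{a>b})$, $Y = f^{b>c}(x^{b>c})$, $Z = f^{c>a}(x^{c>a})$ and take expectations with respect to the uniform measure on profiles. Linearity of expectation then yields exactly~\eqref{eq:PF}. There is no real obstacle here; the lemma is a direct Fourier-style rewriting, and its value lies entirely in its later use (expressing $P(F)$ as a sum of pairwise correlations each of which can be analyzed by Majority is Stablest / invariance, as the subsequent section does).
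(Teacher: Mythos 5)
Your proof is correct and follows essentially the same route as the paper: identify the paradox event with the indicator of $\{(1,1,1),(-1,-1,-1)\}$, write that indicator as the multilinear polynomial $\tfrac{1}{4}(1+xy+yz+zx)$, and take expectations. The only cosmetic difference is that you verify the polynomial identity by cases where the paper simply asserts it.
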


\begin{proof}
Let $s : \{-1,1\}^3 \to \{0,1\}$ be the indicator function of the set $\{(1,1,1),(-1,-1,-1)\}$.
Recall that the outcome of $F$ is non-transitive iff
\[
s(f^{a>b}(x^{a>b}),f^{b>c}(x^{b>c}),f^{c>a}(x^{c>a})) = 1.
\]
Moreover $s(x,y,z) = 1/4(1 + xy + yz + zx)$.
The proof follows.
\end{proof}
\begin{Theorem} \label{thm:arrow_low_cross_unif}
For every $\eps > 0$ there exists a $\delta(\eps) > 0$ and a $\tau(\delta) > 0$
such that the following holds.
Let $f_1,f_2,f_3 : \{-1,1\}^n \to \{-1,1\}$.
Assume that for all $1 \leq i \leq 3$ and all $u \in \{-1,1\}$ it holds that

\begin{equation} \label{eq:non_top_bot_cross_unif}
\P[f_i = u, f_{i+1} = -u] \leq 1-2\eps
\end{equation}
and for all $j$ it holds that
\begin{equation} \label{eq:cross_inf_unif}
|\{ 1 \leq i \leq 3 : I_j(f_i) > \tau \}| \leq 1.
\end{equation}
Then it holds that
\[
\P(f_1,f_2,f_3) \geq \delta.
\]
Moreover, assuming the uniform distribution, one may take:
\[
\delta = \frac{1}{8}(\eps/2)^{20}, \quad \tau = \tau(\delta),
\]
where
\[
\tau(\delta) := \delta^{C \frac{\log(1/\delta)}{\delta}},
\]
for some absolute constant $C$.
\end{Theorem}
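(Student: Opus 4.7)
The plan is to combine Kalai's formula (Lemma~\ref{lem:kalai}), the strong cross-influence form of non-linear invariance~\cite{MoOdOl:09,Mossel:09,DiMoRe:06}, and the Gaussian Arrow Theorem (Theorem~\ref{thm:arrow_gauss}). Identify $f_1, f_2, f_3$ with $f^{a>b}, f^{b>c}, f^{c>a}$. Kalai's formula then reads
\[
P(F) = \tfrac{1}{4}\bigl(1 + \E[f_1(x^{a>b})f_2(x^{b>c})] + \E[f_2(x^{b>c})f_3(x^{c>a})] + \E[f_3(x^{c>a})f_1(x^{a>b})]\bigr),
\]
and by Lemma~\ref{lem:cor_vote} each pair such as $(x^{a>b}(j), x^{b>c}(j))$ has the same per-coordinate covariance $-1/3$ as $(N_1, N_2)$ in~(\ref{eq:gauss}), so the Gaussian paradox probability for any triple of Gaussian $\{-1,1\}$-valued functions $g_1, g_2, g_3$ admits an identical expansion in terms of pairwise Gaussian correlations.

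The next step is to invoke the strong invariance principle of~\cite{Mossel:09} (with the one-influential-function-per-coordinate relaxation of~\cite{DiMoRe:06}), which compares expectations of smooth three-variate functionals applied to $(f_1(x^{a>b}), f_2(x^{b>c}), f_3(x^{c>a}))$ with the same functionals applied to the multilinear extensions evaluated on the correlated Gaussian vectors $(\CalN^{a>b}, \CalN^{b>c}, \CalN^{c>a})$ of covariance structure~(\ref{eq:gauss}). Hypothesis~(\ref{eq:cross_inf_unif}) is exactly the configuration that this strong form accommodates. Applying invariance simultaneously to the indicator $s(x,y,z) = \tfrac{1}{4}(1+xy+yz+zx)$ of paradox patterns and to each of the six events $\{f_i = u, f_{i+1} = -u\}$, and then rounding the multilinear extensions at the Gaussian inputs back into $\{-1,1\}$ to produce $g_1, g_2, g_3$ with matching marginals, yields an error $\eta(\tau) \to 0$ as $\tau \to 0$ on both the paradox probability and the six ``top/bottom'' marginals.

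I would then apply Theorem~\ref{thm:arrow_gauss} to $g_1, g_2, g_3$. The margin $2\eps$ in~(\ref{eq:non_top_bot_cross_unif}) was chosen precisely so that, after losing $\eta(\tau) < \eps$ to invariance, the Gaussian triple still satisfies~(\ref{eq:non_dict_normal}) with margin $\eps$; Theorem~\ref{thm:arrow_gauss} then forces the Gaussian paradox probability to be at least $(\eps/2)^{18}$, and transferring back gives
\[
P(F) \geq (\eps/2)^{18} - \eta(\tau) \geq \tfrac{1}{2}(\eps/2)^{18} \geq \tfrac{1}{8}(\eps/2)^{20},
\]
provided $\tau$ is chosen small enough that $\eta(\tau) \leq \min(\eps, \tfrac{1}{2}(\eps/2)^{18})$.

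The main obstacle—and the reason the threshold $\tau(\delta)$ has the super-polynomial form $\delta^{C\log(1/\delta)/\delta}$ rather than a polynomial dependence on $\delta$—is the quality of the quantitative invariance principle. The usable bounds require first smoothing by a noise operator (equivalently, truncating the Hermite/Fourier expansion at degree roughly $\log(1/\delta)/\delta$) before swapping bits for Gaussians, and this truncation feeds into the influence threshold exponentially. Verifying that the strong one-exception-per-coordinate form of invariance remains valid under this truncation, and that the $\{-1,1\}$-rounding of the Gaussian multilinear extensions preserves both marginals and pairwise correlations up to $\eta(\tau)$, is the bookkeeping-heavy step that drives the explicit choice of $\tau(\delta)$.
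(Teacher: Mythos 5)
Your route is viable in spirit but is not the one the paper takes for this theorem, and it carries an extra technical gap around the rounding step. The paper's proof of Theorem~\ref{thm:arrow_low_cross_unif} is considerably more direct: it sets $g_i = \sgn(\cdot - t_i)$ with the $t_i$ chosen so that $\E[g_i] = \E[f_i]$ under the Gaussian measure (no multilinear extension, no joint invariance transfer of the paradox probability). Since matching means yields, from~(\ref{eq:non_top_bot_cross_unif}), $\min(\P[g_i = u], \P[g_{i+1} = -u]) \leq 1-\eps$, condition~(\ref{eq:non_dict_normal}) holds for the $g_i$ and Theorem~\ref{thm:arrow_gauss} gives $P(g_1,g_2,g_3) > 8\delta$ directly. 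The crucial observation that makes the uniform case cheap is that all three pairwise correlations equal $-1/3 < 0$: for negative $\rho$, the Majority is Stablest bound (in the strong, one-high-influence-coordinate form of~\cite{DiMoRe:06,Mossel:09}) is one-sided in the useful direction, giving $\E[f_i(x^{a>b}) f_{i+1}(x^{b>c})] \geq \E[g_i(N_i) g_{i+1}(N_{i+1})] - \delta$ for each of the three pairs. Plugging these term-by-term into Kalai's formula~(\ref{eq:PF}) and the Gaussian analog~(\ref{eq:PG}) immediately yields $P(f_1,f_2,f_3) \geq P(g_1,g_2,g_3) - 3\delta/4 > 7\delta$.

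Your proposal instead replicates the strategy the paper reserves for the harder symmetric-distribution case (Theorem~\ref{thm:arrow_low_inf} via Theorem~\ref{thm:invariance}), where the sign of the correlations is not controlled and term-by-term MIST fails. That approach can be made to work here too, but two points in your write-up are off. First, you propose to \emph{round} the Gaussian multilinear extensions to $\bits$ and then apply Theorem~\ref{thm:arrow_gauss}; this rounding is not a controlled operation on marginals or on pairwise correlations, and no estimate supports it. What the paper actually does in Theorem~\ref{thm:invariance} is smooth by $T_{1-\eta}$, view the result as a Gaussian polynomial $f'$, and then \emph{clamp} to $[-1,1]$, with the $L^2$ error controlled by Theorem~3.20 of~\cite{MoOdOl:09}; one then invokes the $[-1,1]$-valued Gaussian Arrow Theorem~\ref{thm:arrow_gauss_gen}, not the $\bits$-valued Theorem~\ref{thm:arrow_gauss}. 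Second, there is no need to ``apply invariance simultaneously to the indicator $s$'' as a trivariate functional; since $s(x,y,z)=\tfrac{1}{4}(1+xy+yz+zx)$ is an affine combination of pairwise products, control on each pairwise correlation (Theorem~\ref{thm:invariance} style, or MIST in the uniform case) already transfers the paradox probability via Kalai's formula. If you repair the rounding step by switching to clamping plus Theorem~\ref{thm:arrow_gauss_gen}, your argument goes through and essentially reproduces the paper's Theorem~\ref{thm:arrow_low_inf}, but you will have taken the long road where a shortcut was available.
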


\begin{proof}
Let $g_1,g_2,g_3 : \R \to \{-1,1\}$ be of the form $g_i = \sgn(x-t_i)$, where $t_i$ is chosen so that
$\E[g_i] = \E[f_i]$ (where the first expected value is according to the Gaussian measure). Let $N_1,N_2,N_3 \sim N(0,1)$ be jointly
Gaussian with $\E[N_i N_{i+1}] = -1/3$.
From Theorem~\ref{thm:arrow_gauss} it follows that:
\[
P(g_1,g_2,g_3) > 8 \delta,
\]
and from the Majority is Stablest theorem as stated in Theorem 6.3 and Lemma 6.8 in~\cite{Mossel:09}, it follows that by choosing
$C$ in the definition of $\tau$ large enough, we have:
\[
\E[f_1(x^{a>b})f_2(x^{b>c})] \geq \E[g_1(N_1) g_2(N_2)] - \delta, \quad
\E[f_2(x^{a>b})f_3(x^{b>c})] \geq \E[g_2(N_1) g_3(N_2)] - \delta,
\]
\[
\E[f_3(x^{a>b})f_1(x^{b>c})] \geq \E[g_3(N_1) g_1(N_2)] - \delta.
\]
From~(\ref{eq:PF}) and~(\ref{eq:PG}) it now follows that:
\[
P(f_1,f_2,f_3) \geq P(g_1,g_2,g_3) - 3 \delta/4 > 7 \delta,
\]
as needed.
\end{proof}

\section{One Influential Variable} \label{sec:one}
The last case to consider is where there is a single influential variable.
This case contains in particular the case of the dictator function. Indeed, our goal in this section
will be to show that if there is a single influential voter and the probability of an irrational outcome is small, then
the function must be close to a dictator function or to a function where one of the alternatives is always ranked at the
bottom (top).

\begin{Theorem} \label{thm:arrow_one_inf}
Consider the voting model with three alternatives and either uniform votes and $\alpha=1/6$, 
For every $\eps > 0$ there exists a $\delta(\eps) > 0$ and a $\tau(\delta) > 0$ such that the following holds.
Let $f_1,f_2,f_3 : \{-1,1\}^n \to \{-1,1\}$ and let $F$ be the social choice function defined by letting
$f^{a>b} = f_1, f^{b>c} = f_2$ and $f^{c>a} = f_3$.
Assume that for all $1 \leq i \leq 3$ and $j > 1$ it holds that
\begin{equation} \label{eq:inf5}
I_j(f_i) < \alpha \tau.
\end{equation}
Then either
\begin{equation} \label{eq:paradox5}
\P(f_1,f_2,f_3) \geq \alpha \delta,
\end{equation}
or there exists a function $G \in \F_3(n)$ such that $D(F,G) \leq 9 \eps$.
Moreover, assuming the uniform distribution, one may take:
\[
\delta = (\eps/2)^{20}, \quad \tau = \tau(\delta).
\]
\end{Theorem}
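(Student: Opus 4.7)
The strategy is to condition on voter~1's vote, apply Theorem~\ref{thm:arrow_low_cross_unif} to each of the resulting restricted constitutions, and then combinatorially classify the possible outcomes.

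For $\sigma \in S(3)$ let $F^\sigma$ be the constitution on voters $2,\ldots,n$ obtained by freezing $\sigma(1) = \sigma$, given by $f_i^\sigma(x_2,\ldots,x_n) = f_i(\text{binary image of }\sigma,\, x_2,\ldots,x_n)$. Since voters are independent and $\P[x^{a>b}(1) = \pm 1] = 1/2$ (by symmetry of $\mu$, since $x^{a>b}(-\sigma) = -x^{a>b}(\sigma)$), we have $I_j(f_i^\sigma) \leq 2\,I_j(f_i) < 2\alpha\tau \leq \tau$ for all $j\ge 2$ and all $i$, so each $F^\sigma$ satisfies the low-influence hypothesis of Theorem~\ref{thm:arrow_low_cross_unif}. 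Write $P(F) = \sum_\sigma \mu(\sigma)\, P(F^\sigma)$; if~\eqref{eq:paradox5} fails, then $P(F) < \alpha\delta$ forces $P(F^\sigma) < \delta$ for every $\sigma$. The contrapositive of Theorem~\ref{thm:arrow_low_cross_unif} then yields, for every $\sigma$, a pair of pairwise functions both $2\eps$-close to constants with opposite signs, which is equivalent to a specific alternative being ranked at the top or bottom of the social order with conditional probability $\ge 1 - 2\eps$. Record this as a label $L(\sigma) \in \{a,b,c\} \times \{+,-\}$.

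Next I would exploit the fact that, because $f_i$ depends on voter~1 only through a single binary coordinate (the pairwise preference relevant to $f_i$), one has $f_i^\sigma = f_i^{\sigma'}$ whenever $\sigma$ and $\sigma'$ agree on that preference; in particular the six $\sigma$'s split into three $\pm$-pairs for each $f_i$. A short case analysis over the possible maps $L : S(3) \to \{a,b,c\}\times\{\pm\}$ uses this coupling as follows: if for two different $\sigma,\sigma'$ the forced pairs of constants involve a common $f_i$ restricted to the same value of voter~1, but demand opposite signs, one derives an immediate contradiction (a function cannot be $2\eps$-close to both $+1$ and $-1$ for $\eps<1/4$). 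The maps $L$ surviving this consistency check turn out to be either (i) constant in $\sigma$, corresponding to a fixed alternative being always at top or always at bottom (partition $\{x\}>\{y,z\}$ or $\{x,y\}>\{z\}$ in Theorem~\ref{thm:general_arrow}), or (ii) tracking voter~1's top (resp.\ bottom) alternative across all $\sigma$, corresponding to the (anti-)dictator on voter~1.

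In each surviving case I would define $G$ by rounding every forced pairwise function to its prescribed constant sign while leaving the remaining (unforced) pairwise function unchanged, and verify $G \in \F_3(n)$ via Theorem~\ref{thm:general_arrow}. Because for every $\sigma$ at most three pairwise bits are altered, and each alteration costs at most $2\eps$ in conditional total variation, summing over the pairs and applying the union bound yields $D(F,G) \leq 9\eps$. One may then take $\delta = (\eps/2)^{20}$ and $\tau = \tau(\delta)$ inherited from Theorem~\ref{thm:arrow_low_cross_unif}. The main obstacle is the combinatorial case analysis in the second step: one must enumerate the possible labelings $L$ compatible with the pairwise coupling $f_i^\sigma = f_i^{\sigma'}$ and show that each surviving labeling matches, up to the tolerated $O(\eps)$ error, the partition structure of some $G \in \F_3(n)$, while correctly handling the ``free'' pairwise function that appears inside every two-element block.
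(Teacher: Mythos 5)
Your decomposition is essentially the paper's: condition on voter~$1$'s vote, apply Theorem~\ref{thm:arrow_low_cross_unif} to each restricted constitution, and round the near-constant conditional functions to produce $G$. Where you diverge is in the final verification that $G\in\F_3(n)$, and that is where the gap sits.

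You propose to carry out a case analysis over the labeling map $L:S(3)\to\{a,b,c\}\times\{\pm\}$, using the coupling $f_i^\sigma=f_i^{\sigma'}$ (for $\sigma,\sigma'$ agreeing on the relevant pairwise bit) to prune inconsistent labelings, and then to match each survivor to a dictator or a top/bottom partition of Theorem~\ref{thm:general_arrow}. You explicitly flag this as ``the main obstacle,'' and indeed it is not carried out; as written the proposal is a plan rather than a proof. More to the point, the case analysis is unnecessary. You never need to determine \emph{which} element of $\F_3(n)$ the constructed $G$ is. Since $G$ satisfies IIA by construction, $G\in\F_3(n)$ is equivalent to $P(G)=0$, and this follows directly from the output of Theorem~\ref{thm:arrow_low_cross_unif}: once you round, for every legal $b=(b_1,b_2,b_3)\notin\{(1,1,1),(-1,-1,-1)\}$ there are indices $i$ and a sign $u$ such that $g_i(b_i,\cdot)\equiv u$ and $g_{i+1}(b_{i+1},\cdot)\equiv -u$ are opposite constants, so the triple $(g_1,g_2,g_3)$ can never be monochromatic on any input. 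This single observation replaces the entire enumeration of labelings and immediately gives $P(G)=0$. The distance bound $D(F,G)\le 9\eps$ then follows because each $g_i$ is obtained from $f_i$ by replacing (some of) the two conditional slices by a constant to which that slice is $O(\eps)$-close, giving $D(f_i,g_i)\le 3\eps$ and hence $D(F,G)\le 9\eps$ by a union bound over the three pairwise coordinates. So the route you sketch could be made to work, but it takes on a combinatorial burden the statement does not actually require, and as submitted that burden is unresolved.
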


\begin{proof}
Consider the functions $f_i^b$ for $1 \leq i \leq 3$ and $b \in \{-1,1\}$ defined by
\[
f_i^b(x_2,\ldots,x_n) = f_i(b,x_2,\ldots,x_n).
\]
Note that for all $b \in \{-1,1\}$, for all $1 \leq i \leq 3$ and for all $j > 1$ it holds that $I_j(f_i^{b_i}) < \tau$
and therefore we may apply 
Theorem~\ref{thm:arrow_low_cross_unif}.
We obtain that for every
$b=(b_1,b_2,b_3) \notin \{(1,1,1),(-1,-1,-1)\}$ either:
\begin{equation} \label{eq:small_paradox}
\P(f_1^{b_1},f_2^{b_2},f_3^{b_3}) \geq \delta,
\end{equation}
or there exist a $u(b,i) \in \{-1,1\}$ and an $i = i(b)$ such that
\begin{equation} \label{eq:small_constant}
\min(\P[f_i^{b_i} = u(b,i)], \P[f_{i+1}^{b_{i+1}} = -u(b,i)]) \geq 1-3\eps.
\end{equation}
Note that if there exists a vector $b = (b_0,b_1,b_2) \notin \{(1,1,1),(-1,-1,-1)\}$ for which~(\ref{eq:small_paradox}) holds
then~(\ref{eq:paradox5}) follows immediately.

It thus remains to consider the case where~(\ref{eq:small_constant}) holds for all $6$ vectors $b$.
In this case we will define new functions $g_i$ as follows. We let $g_i(b,x_2,\ldots,x_n) = u$ if
$\P[f_i^{b_i} = u] \geq 1-3\eps$ for $u \in \{-1,1\}$ and $g_i(b,x_2,\ldots,x_n) = f_i(b,x_2,\ldots,x_n)$ otherwise.
We let $G$ be the social choice function defined by $g_1,g_2$ and $g_3$.
From~(\ref{eq:small_constant}) it follow that for every $b = (b_0,b_1,b_2) \notin \{(1,1,1),(-1,-1,-1)\}$ there exists two functions
$g_i,g_{i+1}$ and a value $u$ s.t. $g_i(b_i,x_2,\ldots,x_n)$ is the constant function $u$ and $g_{i+1}(b_{i+1},x_2,\ldots,x_n)$
is the constant function $-u$. So
\[
P(g_1,g_2,g_3) = \P[(g_1,g_2,g_3) \in \{(1,1,1),(-1,-1,-1)\}] = 0,
\]
and therefore $G \in \F_3(n)$. It is further easy to see that $D(f_i,g_i) \leq 3 \eps$ for all $i$ and therefore:
\[
D(F,G) \leq D(f_1,g_1) + D(f_2,g_2) + D(f_3,g_3) \leq 9 \eps.
\]
The proof follows.
\end{proof}

\section{Quantitative Arrow Theorem for $3$ Candidates} \label{sec:3}
We now prove a quantitative version of Arrow theorem for $3$ alternatives.

\begin{Theorem} \label{thm:3}
Consider voting on $3$ alternatives where voters vote uniformly at random from $S_3^n$.
Let $\eps > 0$. Then there exists a $\delta = \delta(\eps)$, such that for every $n$, if $F$ is a
constitution on $n$ voters satisfying:
\begin{itemize}
\item
IIA and
\item
$P(F) < \delta$,
\end{itemize}
then there exists $G \in \F_3(n)$ satisfying $D(F,G) < \eps$.
Moreover, one can take
\begin{equation} \label{eq:del33}
\delta = \exp \left(-\frac{C}{\eps^{21}} \right).
\end{equation}

\end{Theorem}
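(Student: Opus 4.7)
The plan is a case analysis on the influence pattern of the three IIA pairwise preference functions $f_1 = f^{a>b}$, $f_2 = f^{b>c}$, $f_3 = f^{c>a}$, feeding each case into one of the three ingredients already prepared: Theorem~\ref{thm:two_inf} (two distinct voters influential for two distinct functions force a paradox), Theorem~\ref{thm:arrow_low_cross_unif} (low cross-influences), and Theorem~\ref{thm:arrow_one_inf} (at most one influential voter). Set $\eps' = \eps/9$, $\delta_1 = (\eps'/2)^{20}$, let $\tau_1 = \tau(\delta_1)$ be the influence threshold supplied by Theorems~\ref{thm:arrow_low_cross_unif} and~\ref{thm:arrow_one_inf}, put $\tau = \alpha\tau_1 = \tau_1/6$ (using $\alpha = 1/6$ for the uniform distribution), and define
\[
\delta \;=\; \min\bigl(\alpha\delta_1,\ \tau^3/36\bigr).
\]
Since $\tau_1 = \delta_1^{C\log(1/\delta_1)/\delta_1}$ and $\delta_1$ is polynomial of degree $20$ in $\eps$, a direct estimate yields $\delta \geq \exp(-C'/\eps^{21})$, matching~\eqref{eq:del33}.

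Assume $F$ is IIA with $P(F) < \delta$, and let $S_i = \{j : I_j(f_i) > \tau\}$. If there exist distinct indices $i \neq i'$ and distinct voters $p \neq q$ with $p \in S_i$ and $q \in S_{i'}$, Theorem~\ref{thm:two_inf} gives $P(F) > \tau^3/36 \geq \delta$, contradicting the hypothesis. So this ``two voters, two functions'' configuration does not occur, and the remaining possibilities split by the count $|\{i : S_i \neq \emptyset\}|$.

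If at most one $S_i$ is nonempty, then at each voter $j$ at most one $I_j(f_i)$ exceeds $\tau$, so condition~\eqref{eq:cross_inf_unif} of Theorem~\ref{thm:arrow_low_cross_unif} holds and the theorem applies with parameter $\eps'$. Either $P(F) \geq \delta_1 > \delta$ (contradiction), or~\eqref{eq:non_top_bot_cross_unif} fails, yielding $i$ and $u \in \{\pm 1\}$ with $\P[f_i = u] \geq 1 - 2\eps'$ and $\P[f_{i+1} = -u] \geq 1 - 2\eps'$. Replacing $f_i, f_{i+1}$ by the constants $u, -u$ produces $G \in \F_3(n)$ (one alternative is pinned to the top or bottom, yielding a valid partition in Theorem~\ref{thm:general_arrow}) with $D(F,G) \leq 4\eps' < \eps$. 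If instead two or three $S_i$ are nonempty, the failure of Case~A forces any two nonempty $S_i$, $S_{i'}$ to coincide as a common singleton $\{j^*\}$ (otherwise we could extract distinct $p \in S_i$, $q \in S_{i'}$), hence every nonempty $S_i$ equals $\{j^*\}$. Then $I_j(f_i) \leq \tau = \alpha\tau_1$ for every $j \neq j^*$ and every $i$; relabeling $j^*$ as voter~$1$, condition~\eqref{eq:inf5} of Theorem~\ref{thm:arrow_one_inf} is satisfied, and that theorem delivers either $P(F) \geq \alpha\delta_1 \geq \delta$ (contradiction) or $D(F,G) \leq 9\eps' = \eps$ for some $G \in \F_3(n)$.

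The main obstacle is quantitative bookkeeping rather than conceptual: the influence threshold from Theorem~\ref{thm:arrow_low_cross_unif}, $\tau(\delta_1) = \delta_1^{C\log(1/\delta_1)/\delta_1}$, is already super-polynomially small in $\delta_1$, and cubing it (as required by Theorem~\ref{thm:two_inf}) is what drives $\delta$ down to $\exp(-C/\eps^{21})$. A minor additional check is that each of the three possible ``top/bottom'' patterns produced in the low-influence subcase indeed gives $G \in \F_3(n)$, which is immediate from the characterization in Theorem~\ref{thm:general_arrow}.
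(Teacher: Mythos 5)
Your proof follows the same three-way case analysis as the paper: one case routes to Theorem~\ref{thm:two_inf} (two voters pivotal for two different pairwise functions), one to Theorem~\ref{thm:arrow_low_cross_unif} (low cross influences), and one to Theorem~\ref{thm:arrow_one_inf} (a single influential voter), and your argument that the three cases are exhaustive once the ``two voters, two functions'' configuration is excluded is correct. Your quantitative bookkeeping is in fact more careful than the paper's (the paper writes ``let $\eta=\delta$'' for the influence threshold, which should really be the $\tau$ from the low-influence theorems). One small imprecision: you set $\tau_1 = \tau(\delta_1)$ with $\delta_1 = (\eps'/2)^{20}$, but Theorem~\ref{thm:arrow_low_cross_unif} supplies $\tau$ at $\delta = \tfrac{1}{8}(\eps'/2)^{20}$, which is strictly smaller; to make both theorems literally applicable you should take the minimum of the two thresholds (equivalently adjust the absolute constant $C$ in the definition of $\tau(\cdot)$). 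This does not affect the $\eps$-dependence, and the final bound $\delta \ge \exp(-C'/\eps^{21})$ survives.
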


\begin{proof}
Let $f^{a>b},f^{b>c},f^{c>a} : \{-1,1\}^n \to \{-1,1\}$ be the three pairwise preference functions.
Let $\eta = \delta$ (where the values of $C$ will be determined later).
We will consider three cases:
\begin{itemize}
\item
There exist two voters $i \neq j \in [n]$ and two functions $f \neq g \in \{f^{a>b},f^{b>c},f^{c>a}\}$ such that
\begin{equation} \label{eq:case1}
I_i(f) > \eta, \quad I_j(g) > \eta.
\end{equation}
\item
For every two functions $f \neq g \in \{f^{a>b},f^{b>c},f^{c>a}\}$ and every $i \in [n]$, it holds that
\begin{equation} \label{eq:case2}
\min(I_i(f),I_i(g)) < \eta.
\end{equation}
\item
There exists a voter $j'$ such that for all $j \neq j'$
\begin{equation} \label{eq:case3}
\max(I_j(f^{a>b}),I_j(f^{b>c}),I_j(f^{c>a})) < \eta.
\end{equation}
\end{itemize}

First note that each $F$ satisfies at least one of the three conditions (\ref{eq:case1}), (\ref{eq:case2})
or (\ref{eq:case3}). Thus it suffices to prove the theorem for each of the three cases.

In~(\ref{eq:case1}), we have by Theorem~\ref{thm:two_inf} have that
\[
P(F) > \frac{1}{36} \eta^3.
\]
We thus obtain that $P(F) > \delta$ where $\delta$ is given in~(\ref{eq:del33}) 
by taking larger values $C'$ for $C$.

In case~(\ref{eq:case2}), by Theorem~\ref{thm:arrow_low_cross_unif} it follows that either there exist a function $G$ which
always put a candidate at top / bottom and $D(F,G) < \eps$ (if~(\ref{eq:non_top_bot_cross_unif}) holds), or
$P(F) > C \eps^{20} >> \delta$. 

Similarly in the remaining case~(\ref{eq:case3}), we have by Theorem~\ref{thm:arrow_one_inf} that either $D(F,G) < \eps$ or
$P(F) > C \eps^{20} >> \delta$. 
The proof follows.

\end{proof}

\section{Proof Concluded} \label{sec:k}
We now conclude the proof.

\begin{Theorem} \label{thm:k}
Consider voting on $k$ alternatives where voters vote uniformly at random from $S_k^n$.
Let $\frac{1}{100} > \eps > 0$. Then there exists a $\delta = \delta(\eps)$, such that for every $n$, if $F$ is a
constitution on $n$ voters satisfying:
\begin{itemize}
\item
IIA and
\item
$P(F) < \delta$,
\end{itemize}
then there exists $G \in \F_k(n)$ satisfying $D(F,G) < k^2 \eps$.

Moreover, one can take
\begin{equation} \label{eq:del3}
\delta = \exp \left(-\frac{C}{\eps^{21}} \right).
\end{equation}

\end{Theorem}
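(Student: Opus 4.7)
The plan is to apply Theorem~\ref{thm:3} to every triple of alternatives and then glue the resulting $3$-candidate approximations into a single $k$-candidate constitution $G \in \F_k(n)$. This closely parallels the proof of Theorem~\ref{thm:ksmall} given above, with Theorem~\ref{thm:3} playing the role of Theorem~\ref{thm:3small}. For each triple $A$ of alternatives, the restriction $F_A$ satisfies IIA and $P(F_A) \leq P(F) < \delta$, so Theorem~\ref{thm:3} produces $H_A \in \F_3(n)$ with $D(F_A, H_A) < \eps$. By the combinatorial characterization in Theorem~\ref{thm:general_arrow}, every pairwise function of $H_A$ is a \emph{canonical} function, meaning either a (possibly reversed) dictator $\pm x_i$ on some voter $i$, or a constant $\pm 1$.

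Any two distinct canonical functions $g, g'$ satisfy $D(g, g') \geq 1/2$, so under the assumption $\eps < 1/100$ each pairwise function $f^{a>b}$ of $F$ is within statistical distance $\eps$ of at most one canonical function. I define $g^{a>b}$ to be this unique canonical function when it exists, and $g^{a>b} = f^{a>b}$ otherwise; let $G$ denote the resulting constitution. Since each of the $\binom{k}{2}$ pairwise functions is modified by at most $\eps$, this yields $D(F, G) \leq \binom{k}{2} \eps < k^2 \eps$. The value of $\delta$ is inherited directly from Theorem~\ref{thm:3}, giving the bound stated in~(\ref{eq:del3}).

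It remains to verify $G \in \F_k(n)$. By Theorem~\ref{thm:general_arrow} it suffices to show $G_A \in \F_3(n)$ for every triple $A = \{a,b,c\}$. Each pairwise function of $H_A$ is canonical and within distance $\eps$ of the corresponding pairwise function of $F_A$; by the uniqueness of the nearest canonical function, the three pairwise functions of $H_A$ must coincide with $g^{a>b}, g^{b>c}, g^{c>a}$, so $G_A = H_A \in \F_3(n)$. The main obstacle is the edge case in which some $f^{a>b}$ is \emph{not} within $\eps$ of any canonical function, forcing $g^{a>b} = f^{a>b}$; this happens precisely when Theorem~\ref{thm:general_arrow} places $\{a,b\}$ inside a $2$-element block of the partition associated to $H_A$, in which case the pairwise function of $H_A$ on this pair is $f^{a>b}$ itself. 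Hence the identification $G_A = H_A$ still holds, and the triple-by-triple verification combines with Theorem~\ref{thm:general_arrow} to yield $G \in \F_k(n)$.
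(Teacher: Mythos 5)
Your overall strategy mirrors the paper's proof: apply Theorem~\ref{thm:3} to each triple of alternatives, exploit the fact that any two distinct dictator or constant functions are at statistical distance at least $1/2$ in order to define each $g^{a>b}$ unambiguously, and then verify $G_A \in \F_3(n)$ triple by triple. The gap is in the verification. You claim that every pairwise function of $H_A$ is ``canonical'' (a dictator $\pm x_i$ or a constant $\pm 1$). This is not what Theorem~\ref{thm:general_arrow} gives: when the partition associated to $H_A$ contains a $2$-element block $\{a,b\}$, the pairwise function of $H_A$ on $\{a,b\}$ is an \emph{arbitrary} non-constant function, not necessarily a dictator or a constant. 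You acknowledge this edge case but then assert that in this situation ``the pairwise function of $H_A$ on this pair is $f^{a>b}$ itself.'' That is not forced: Theorem~\ref{thm:3} only guarantees $D(F_A,H_A) < \eps$, so the pairwise function of $H_A$ on $\{a,b\}$ is within $\eps$ of $f^{a>b}$ but need not equal it. In particular, $f^{a>b}$ could itself be within $\eps$ of a constant even while the corresponding pairwise function of $H_A$ is non-constant, in which case $g^{a>b}$ is that constant and differs from $H_A$'s pairwise function. So the asserted identification $G_A = H_A$ can fail.

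The correct way to close the argument (which is what the paper does) is to aim directly for $G_A \in \F_3(n)$ rather than for $G_A = H_A$. When $H_A$ is not a dictator, Theorem~\ref{thm:general_arrow} on three alternatives says some alternative, say $c$, is ranked by $H_A$ always at the top or always at the bottom, so the pairwise functions of $H_A$ on $\{c,a\}$ and $\{c,b\}$ are constants. Hence $f^{c>a}$ and $f^{c>b}$ are each within $\eps$ of a constant, so $g^{c>a}$ and $g^{c>b}$ are those constants by uniqueness. This forces $G_A$ to always rank $c$ at the top (or bottom), and therefore $G_A \in \F_3(n)$ regardless of what $g^{a>b}$ turned out to be. With this modification the rest of your argument goes through.
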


\begin{proof}
The proof follows by applying Theorem~\ref{thm:3} to triplets of alternatives.
Assume $P(F) < \delta(\eps)$.

Note that if $g_1,g_2 : \{-1,1\}^n \to \{-1,1\}$ are two different function each of which is either
a dictator or a constant function than $D(g_1,g_2) \geq 1/2$. Therefore for all $a,b$ it holds that
$D(f^{a>b},g) < \eps/10$ for at most one function $g$ which is either a dictator or a constant function.
In case there exists such function we let $g^{a>b} = g$, otherwise, we let $g^{a>b} = f^{a>b}$.

Let $G$ be the social choice function defined by the functions $g^{a>b}$. Clearly:
\[
D(F,G) < {k \choose 2} \eps < k^2 \eps.
\]
The proof would follow if we could show $P(G) = 0$ and therefore $G \in \F_k(n)$.

To prove that $G \in F_k(n)$ is suffices to show that for every set $A$ of three alternatives, it holds that
$G_A \in \F_3(n)$. Since $P(F) < \delta$ implies $P(F_A) < \delta$, Theorem~\ref{thm:3} implies that there exists a function $H_A \in F_3(n)$ s.t. $D(H_A,F_A) < \eps$.
There are two cases to consider:
\begin{itemize}
\item
$H_A$ is a dictator. This implies that $f^{a>b}$ is $\eps$ close to a dictator for each $a,b$ and therefore $f^{a>b} = g^{a>b}$ for all pairs $a,b$, so $G_A = H_A \in \F_3(n)$.
\item
There exists an alternative (say $a$) that $H_A$ always ranks at the top/bottom.
In this case we have that $f^{a>b}$ and $f^{c>a}$ are at most $\eps$ far from the constant functions $1$ and $-1$ (or $-1$ and $1$).
The functions $g^{a>b}$ and $g^{c>a}$ have to take the same constant values and therefore again we have that $G_A \in \F_3(n)$.
\end{itemize}

The proof follows.

\end{proof}

\begin{Remark}
Note that this proof is generic in the sense that it takes the quantitative Arrow's result for $3$ alternatives as a black box and
produces a quantitative Arrow result for any $k \geq 3$ alternatives.
\end{Remark}

\section{The class $\F_k(n)$} \label{sec:general_arrow}
In this section we prove Theorem~\ref{thm:general_arrow}. As noted before Wilson~\cite{Wilson:72} gave a partial characterization of functions satisfying IIA. Using a version of Barbera's lemma and the fact we consider only strict orderings we are able to give a complete characterization of the class $\F_k(n)$. For the discussion below it would be useful to say that 
the constitution $F$ is a {\em Degenerate} if there exists an alternative $a$ such that for all profiles $F$ ranks at the top (bottom).
The constitution $F$ is {\em Non Degenerate (ND)} if it is not degenerate.

\subsection{Different Pivots for Different Choices imply Non-Transitivity}
We begin by considering the case of $3$ candidates named $a,b,c$ and $n$ voters named $1,\ldots,n$. We first state Barbera's lemma in this case. 

\begin{Theorem} \label{thm:tech}
Consider a social choice function on $3$ candidates $a,b$ and $c$ and $n$ voters denoted $1,2,\ldots,n$. Assume that the social choice function satisfies that IIA condition and that there exists voters $i \neq j$ such that voter $i$ is pivotal for $f^{a>b}$ and voter $j$ is pivotal for $f^{b>c}$.
Then there exists a profile for which
$(f^{a>b}(x^{a>b}),f^{b>c}(x^{b>c}),f^{c>a}(x^{c>a}))$ is non-transitive.
\end{Theorem}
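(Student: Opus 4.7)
My plan is to reduce Theorem~\ref{thm:tech} to Proposition~\ref{prop:32} by fixing the rankings of all voters outside $\{i,j\}$ appropriately, so that the restricted two-voter system inherits the pivotality hypotheses of Proposition~\ref{prop:32}.

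First I would extract witnesses of pivotality. Since voter $i$ is pivotal for $f^{a>b}$, there exists a vector $y \in \{-1,1\}^n$ such that $f^{a>b}(y_{-i},+1) \neq f^{a>b}(y_{-i},-1)$. Similarly there exists $z \in \{-1,1\}^n$ with $f^{b>c}(z_{-j},+1) \neq f^{b>c}(z_{-j},-1)$. The coordinates $\{y_k : k \neq i\}$ specify a required $a$-vs-$b$ preference for each voter $k \neq i$, and $\{z_k : k \neq j\}$ specify a required $b$-vs-$c$ preference for each voter $k \neq j$. The key simple observation is that the map $\sigma \mapsto (x^{a>b}(\sigma),x^{b>c}(\sigma))$ from $S(3)$ to $\{-1,1\}^2$ is surjective: indeed the rankings $abc,\;cba,\;acb,\;bac$ hit $(+,+),(-,-),(+,-),(-,+)$ respectively. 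Hence for every $k \notin \{i,j\}$ I can pick a ranking $\sigma(k)\in S(3)$ satisfying both $x^{a>b}(k)=y_k$ and $x^{b>c}(k)=z_k$ simultaneously; this also fixes some value of $x^{c>a}(k)$, which is harmless.

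Having fixed $\sigma(k)$ for all $k \notin \{i,j\}$, I would define the restricted constitution $\bar F$ on voters $\{i,j\}$ determined by the pairwise functions
\[
\bar f^{a>b}(u,v) := f^{a>b}(u\text{ at }i,\, v\text{ at }j,\, y_{\text{others}}),
\]
and analogously $\bar f^{b>c}$ (with $z_{\text{others}}$) and $\bar f^{c>a}$ (with the values induced by the fixed $\sigma(k)$'s). By the IIA property the original functions depend only on the relevant pairwise preferences, so this substitution is well-defined. By the choice of $y$, voter $i$ is pivotal for $\bar f^{a>b}$ (taking voter $j$'s preference to be $y_j$); by the choice of $z$, voter $j$ is pivotal for $\bar f^{b>c}$ (taking voter $i$'s preference to be $z_i$). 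Therefore Proposition~\ref{prop:32} applies to $\bar F$ and produces rankings $\sigma(i),\sigma(j)\in S(3)$ for which the restricted two-voter outcome is non-transitive. Combined with the previously fixed $\sigma(k)$, this yields a profile $\sigma\in S(3)^n$ on which $F$ is non-transitive.

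I do not anticipate a real obstacle here; the single nontrivial point is the simultaneous realizability step, namely that independently prescribed values of $x^{a>b}(k)$ and $x^{b>c}(k)$ are always compatible with some strict ranking in $S(3)$. Once that compatibility is established, everything else is just bookkeeping plus a direct invocation of Proposition~\ref{prop:32}. This makes the present theorem a clean $n$-voter generalization of the two-voter statement proved earlier.
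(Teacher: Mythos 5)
Your proof is correct, and it is organized differently from the paper's. The paper proves Theorem~\ref{thm:tech} directly from scratch, essentially re-running the two-voter argument of Proposition~\ref{prop:32} in the $n$-voter setting: it extracts pivotality witnesses for $f^{a>b}$ and $f^{b>c}$, defines the $c>a$ preference vector explicitly by a negation rule ($z_1 = -y_1$ and $z_i = -x_i$ for $i \geq 2$), flips the two pivotal bits to equate all three outputs, and then verifies that every triple $(x_i, y_i, z_i)$ avoids $\{(1,1,1),(-1,-1,-1)\}$, so the vectors come from a genuine profile. You instead reduce to Proposition~\ref{prop:32} as a black box: you fix, for each $k \notin \{i,j\}$, a ranking $\sigma(k)$ compatible with the required $a>b$ and $b>c$ bits, and apply the two-voter proposition to the restricted constitution $\bar F$. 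The surjectivity of $\sigma \mapsto (x^{a>b}(\sigma), x^{b>c}(\sigma))$ that you isolate is precisely the compatibility fact the paper checks at the end of its argument, but you front-load it, and the $c>a$ coordinates for the outside voters are then handled implicitly by the ranking choice rather than by an explicit negation formula. Your modular reduction is cleaner and reuses the two-voter lemma, while the paper's inline version is self-contained and spells out the final witnessing profile in closed form. Both are sound and rest on the same core construction.
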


\begin{proof}
Without loss of generality assume that voter $1$ is pivotal for $f^{a>b}$ and voter $2$ is pivotal for $f^{b>c}$.
Therefore there exist $x_2,\ldots,x_n$ satisfying
\begin{equation} \label{eq:xpiv}
f^{a>b}(+1,x_2,\ldots,x_n) \neq f^{a>b}(-1,x_2,\ldots,x_n)
\end{equation}
and $y_1,y_3,\ldots,y_n$ satisfying
\begin{equation} \label{eq:ypiv}
f^{b>c}(y_1,+1,y_3,\ldots,y_n) \neq f^{b>c}(y_1,-1,y_3,\ldots,y_n).
\end{equation}
Let $z_1 = -y_1$ and $z_i = -x_i$ for $i \geq 2$.
By~(\ref{eq:xpiv}) and~(\ref{eq:ypiv}) we may choose $x_1$ and $y_2$ so that
\[
f^{a>b}(x) = f^{b>c}(y) = f(z),
\]
where $x = (x_1,\ldots,x_n),y=(y_1,\ldots,y_n)$ and $z=(z_1,\ldots,z_n)$.
Note further, that by construction for all $i$ it holds that
\[
(x_i,y_i,z_i) \notin \{(1,1,1),(-1,-1,-1)\},
\]
and therefore there exists a profile $\sigma$ such that
\[
x = x(\sigma), \quad y = y(\sigma), \quad z = z(\sigma).
\]
The proof follows.
\end{proof}





\subsection{$n$ voters, 3 Candidates}

In order to prove Theorem~\ref{thm:general_arrow} we need the following proposition regarding constitutions of a single voter.
\begin{Proposition} \label{prop:1}
Consider a constitution $F$ of a single voter and three alternatives $\{a,b,c\}$ which satisfies IIA and transitivity.
Then exactly one of the following conditions hold:
\begin{itemize}
\item
$F$ is constant. In other words, $F(\sigma) = \tau$ for all $\sigma$ and some fixed $\tau \in S(3)$.
\item
There exists an alternative $c$ such that $c$ is always ranked at the top (bottom) of the ranking and
$f^{a>b}(x) = x$ or $f^{a>b}(x) = -x$.
\item
$F(\sigma) = \sigma$ for all $\sigma$
\item
$F(\sigma) = -\sigma$ for all $\sigma$.
\end{itemize}
\end{Proposition}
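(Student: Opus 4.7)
The plan is to enumerate all constitutions of a single voter on three alternatives that satisfy IIA, and to check transitivity case by case. By IIA, such an $F$ is determined by three pair-choice functions $f^{a>b}, f^{b>c}, f^{c>a}: \{-1,1\} \to \{-1,1\}$; each such function is one of only four possibilities (constant $+1$, constant $-1$, the identity $x\mapsto x$, or the negation $x\mapsto -x$), so there are $4^3=64$ configurations to examine. I will organize the analysis by the number $c\in\{0,1,2,3\}$ of constant functions among the three.

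When $c=3$ the output triple is fixed; transitivity excludes $(1,1,1)$ and $(-1,-1,-1)$, and the remaining six triples correspond bijectively to the six permutations of $\{a,b,c\}$, giving case~1 of the proposition. When $c=2$, I would take for instance $f^{a>b}(x)=\alpha x$ dictatorial with $f^{b>c}=\beta$ and $f^{c>a}=\gamma$ constant; since the first output coordinate takes both signs $\pm 1$, transitivity forces $(\beta,\gamma)\in\{(1,-1),(-1,1)\}$, meaning that $c$ is pinned either to the bottom or to the top. Running through the three possible placements of the dictatorial function then yields case~2.

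The case $c=1$ is the one I need to rule out. Suppose $f^{c>a}=\gamma$ is the constant and the other two are $\alpha x$ and $\beta x$. The key observation is that as $\sigma$ ranges over $S(3)$, the pair $(x^{a>b}(\sigma), x^{b>c}(\sigma))$ realizes all four sign combinations in $\{-1,1\}^2$; hence I can pick $\sigma$ with $(\alpha x^{a>b},\beta x^{b>c})=(\gamma,\gamma)$, forcing the non-transitive output $(\gamma,\gamma,\gamma)$. Symmetry takes care of the other placements of the constant coordinate. Finally, for $c=0$, writing $f^{a>b}(x)=\alpha x$, $f^{b>c}(x)=\beta x$, $f^{c>a}(x)=\gamma x$, the choice $\alpha=\beta=\gamma=+1$ gives $F(\sigma)=\sigma$ and $\alpha=\beta=\gamma=-1$ gives $F(\sigma)=-\sigma$ (cases~3 and~4); for any mixed sign pattern a single permutation witnesses non-transitivity, e.g.\ $\sigma=abc$ maps to $(1,1,1)$ when $\alpha=\beta=1$ and $\gamma=-1$.

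The argument is essentially an enumeration; the only real content is the combinatorial observation in the $c=1$ case that all four sign patterns of $(x^{a>b},x^{b>c})$ occur among the six legal profiles, which denies the constant third coordinate any escape from producing a cycle. This is essentially the single-voter analogue of Theorem~\ref{thm:tech}, but it must be verified directly because that theorem assumes two \emph{distinct} pivotal voters, whereas here the only voter is pivotal for two different pair-choices.
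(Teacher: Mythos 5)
Your proof is correct. It takes a genuinely different, more explicitly enumerative route than the paper's. You partition the $4^3$ configurations by the number $c$ of constant coordinates among $f^{a>b},f^{b>c},f^{c>a}$ and dispose of each stratum directly; the paper instead argues top-down: it assumes $F$ is not constant (so some $f^{a>b}$ is $\pm x$), splits on whether the third alternative is pinned to the top or bottom (giving case~2), and in the remaining branch shows all three functions must be non-constant and then identical (giving cases~3 and~4). The substantive content is the same in both: the one-bit functions are only constants or $\pm x$, and the crucial combinatorial fact — which you state explicitly and the paper uses implicitly when choosing profiles — is that for a single voter every pair $(x^{a>b},x^{b>c})\in\{-1,1\}^2$ is realized by a legal ranking, while the full triple $(x^{a>b},x^{b>c},x^{c>a})$ omits only $(1,1,1)$ and $(-1,-1,-1)$. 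Your stratification buys a very transparent, fully verifiable finite check at the cost of slightly more casework (the $c=2$ analysis must be run for three placements of the non-constant coordinate, which you note but leave to symmetry); the paper's argument is shorter but leaves more of the realizability bookkeeping implicit. Your closing remark — that this is the single-voter analogue of Theorem~\ref{thm:tech} and cannot be deduced from it because there the two pivotal voters are required to be distinct — is exactly the right observation and matches why the paper treats Proposition~\ref{prop:1} as a separate ingredient.
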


\begin{proof}
Assume $F$ is not constant, then there exist two alternatives $a,b$ such that $f^{a>b}$ is not constant and therefore $f^{a>b}(x) = x$ or $f^{a>b}(x)=-x$.
Let $c$ be the remaining alternative. If $c$ is always ranked at the bottom or the top the claim follows.
Otherwise one of the functions $f^{a>c}$ or $f^{b>c}$ is not constant. We claim that in this case all three functions are non-constant. Suppose by way of contradiction that
$f^{c>a}$ is the constant $1$. This means that $c$ is always ranked on top of $a$. However, since $f^{a>b}$ is non-constant there
exists a value $x$ such that $f^{a>b}(x) = 1$ and similarly there exist a value $y$ such that $f^{b>c}(y) = 1$.
Let $\sigma$ be a ranking whose $a>b$ preference is given by $x$ and whose $b>c$ preferences are given by $y$.
Then $G(\sigma)$ satisfies that $a$ is preferred to $b$ and $b$ is preferred to $c$. Thus by transitivity it follows that
$a$ is preferred to $c$ - a contradiction. The same argument applied if $f^{c>a}$ is the constant $-1$ or if $f^{b>c}$ is a constant function.

We have thus established that all three functions $f^{a>b},f^{b>c}$ and $f^{c>a}$ are of the form $f(x)=x$ of $f(x)=-x$.
To conclude we want to show that all three functions are identical. Suppose otherwise. Then two of the functions have the same sign while the third has a different sign. Without loss of generality assume $f^{a>b}(x) = f^{b>c}(x) = x$ and $f^{c>a}(x)=-x$. Then looking at the profile $a>b>c$ we see that $\sigma' = F(\sigma)$ must satisfy $a>b$ and $b>c$ but also $c>a$ a contradiction. A similar proof applies when $f^{a>b}(x) = f^{b>c}(x) = -x$ and $f^{c>a}(x)=x$.
\end{proof}

\begin{Theorem} \label{thm:3c}
Any constitution on three alternatives which satisfies Transitivity, IIA and ND is a dictator.
\end{Theorem}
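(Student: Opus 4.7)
My plan is to combine Barbera's lemma (Theorem~\ref{thm:tech}) with the single-voter classification (Proposition~\ref{prop:1}). Observe first that ND rules out the situation in which two of the three pairwise preference functions $f^{a>b}, f^{b>c}, f^{c>a}$ are simultaneously constant: for example, if both $f^{a>b}\equiv 1$ and $f^{a>c}\equiv 1$ then $a$ sits at the top of every profile, contradicting ND, and the other sign combinations are ruled out similarly by transitivity. Hence at least two of the three pairwise functions are non-constant, and each such function has at least one pivotal voter.

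Next I claim that all pivotal voters of all three pairwise functions coincide with a single voter $i^\ast$. If two non-constant ones among the three had pivotal voters $i\neq j$, then after relabeling $\{a,b,c\}$ so that these two functions are placed in the roles of $f^{a>b}$ and $f^{b>c}$, Theorem~\ref{thm:tech} produces a profile with non-transitive outcome, contradicting Transitivity. So the pivotal voters of any two non-constant pairwise functions collapse to the same voter $i^\ast$, and since a constant pairwise function trivially depends on no voter, every $f^{a>b}, f^{b>c}, f^{c>a}$ depends only on coordinate $i^\ast$. Consequently $F$ itself depends only on voter $i^\ast$'s ranking and descends to a one-voter constitution $\tilde F\colon S(3)\to S(3)$ that still satisfies IIA and Transitivity.

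Proposition~\ref{prop:1} now forces $\tilde F$ into one of four types: constant, always placing some fixed alternative at the top or bottom, the identity $\sigma\mapsto\sigma$, or the reversal $\sigma\mapsto -\sigma$. The first two types would give $F$ a fixed alternative at the top or bottom on every profile and are therefore excluded by ND. The remaining two types are precisely the statement that $F$ is a dictator on voter $i^\ast$ in the sense of the paper, which is the desired conclusion. The one place that requires care, and which I expect to be the main (but mild) obstacle, is the reduction step: one must verify that Theorem~\ref{thm:tech} genuinely applies to every ordered pair of distinct pairwise preference functions by relabeling of $\{a,b,c\}$, and that a potentially constant pairwise function does not obstruct the descent to $\tilde F$. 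Both checks are routine, so beyond these verifications I expect no substantial difficulty in executing the plan.
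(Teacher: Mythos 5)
Your proof is correct and follows essentially the same structure as the paper's: you rule out the case of two constant pairwise functions via ND (and transitivity), collapse the pivotal voters of the non-constant functions to a single voter $i^\ast$ via Theorem~\ref{thm:tech} applied after relabeling, descend to a one-voter constitution, and finish with Proposition~\ref{prop:1} using ND to exclude the constant and degenerate types. The ``routine'' checks you flag---relabelability of any two of the three cyclic pairs into the roles of $f^{a>b}$, $f^{b>c}$, and the harmlessness of a constant third function for the descent to $\tilde F$---are indeed straightforward and match what the paper implicitly uses.
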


\begin{proof}
There are two cases to consider. The first case is where two of the functions $f^{a>b},f^{b>c}$ and $f^{c>a}$ are constant.
Without loss of generality assume that $f^{a>b}$ and $f^{b>c}$ are constant. Note that if $f^{a>b}$ is the constant $1$ and
$f^{b>c}$ is the constant $-1$ then $b$ is ranked at the bottom for all social outcomes in contradiction to the ND condition.
A similar contradiction is derived if $f^{a>b}$ is the constant $-1$ and $f^{b>c}$ is the constant $1$.
We thus conclude that $f^{a>b} = f^{b>c}$. However by transitivity this implies that $f^{c>a}$ is also a constant function and
$f^{c>a} = -f^{a>b}$.

The second case to consider is where at least two of the functions $f^{a>b},f^{b>c}$ and $f^{c>a}$ are not constant.
Assume without loss of generality that $f^{a>b}, f^{b>c}$ are non-constant. Therefore, each has at least one pivotal voter.
From Theorem~\ref{thm:tech} it follows that there exists a single voter $i$ such that each of the functions is either constant, or has a single pivotal voter $i$.
We thus conclude that $F$ is of the form $F(\sigma) = G(\sigma(i))$ for some function $G$. Applying Proposition~\ref{prop:1} shows that
either $G(\sigma)=\sigma$ or $G(\sigma)=-\sigma$ and concludes the proof.
\end{proof}

\ignore{
\subsection{ General Proof}
We now prove Theorem~\ref{thm:general}.
\begin{proof}
Note that for any set of three alternatives $A = \{a,b,c\}$ the condition of Theorem~\ref{thm:3c} hold for $F_A$. This implies in particular that for all $a,b$ either $f^{a>b}(x)=-x_i$ or $f^{a>b}(x)=x_i$ for some voter $i$.
It remains to show that for all $a,b,c,d$ it holds that $f^{a>b}(x)=f^{c>d}(x)$ which implies that there exist an $i$ such that
$F(\sigma)=\sigma_i$ or $F(\sigma)=-\sigma_i$ as needed.

We first consider the case where $\{a,b\}$ and $\{c,d\}$ intersect in one element, say $d=a$. In this case, Theorem~\ref{thm:3c} applied to the rankings of $a,b$ and $c$, implies the desired result.



We finally need to consider the case where $\{a,b\}$ and $\{c,d\}$ are disjoint. Applying Theorem~\ref{thm:3c} to $\{a,b,c\}$ we conclude that one of the functions $f^{a>b}(x)=f^{b>c}(x)$. Then applying it to alternatives $\{b,c,d\}$ we conclude that $f^{b>c}(x)=f^{c>d}(x)$. The proof follows.

\end{proof}
}

\subsection{The Characterization Theorem}
We now prove Theorem~\ref{thm:general_arrow}. Given a set of alternatives $A' \subset A$ and an alternative
$b \notin A$, we write $b \sim A'$ if there exist two alternatives $a,a'\in A_s$ and two profiles $\sigma$ and $\sigma'$ s.t. $F(\sigma)$ ranks $b$ above $a$ and $F(\sigma')$ ranks $a'$ above $b$. Note that if it does not hold that $b \sim A'$ then either $\{b\} >_F A'$ or $A' >_F \{b\}$.

We will use the following lemmas.
\begin{Lemma} \label{lem:gen_ind}
Let $F$ be a transitive constitution satisfying IIA and $A_1,\ldots,A_r,\{b\}$ disjoint sets of alternatives satisfying $A_1 >_F A_2 >_F \ldots >_F A_r$. Then either
\begin{itemize}
\item
There exists an $1 \leq s \leq r+1$ such
\begin{equation} \label{eq:simple_ind}
A_1 >_F \ldots >_F A_{s-1} > \{ b \} >_F A_s >_F \ldots >_F A_r,
\end{equation}
or
\item
There exist an $1 \leq s \leq r$ such that $b \sim A_r$ and
\begin{equation} \label{eq:complicated_ind}
A_1 >_F \ldots >_F A_{s} \cup \{ b \} >_F A_{s+1} >_F \ldots >_F A_r.
\end{equation}
\end{itemize}
\end{Lemma}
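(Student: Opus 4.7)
The plan is to classify, for each index $i \in \{1,\ldots,r\}$, the relationship between $b$ and $A_i$ into one of three mutually exclusive cases: (A) $\{b\} >_F A_i$, (B) $A_i >_F \{b\}$, or (C) $b \sim A_i$ (these are exhaustive, since (C) is precisely the simultaneous failure of (A) and (B)). I would then establish three compatibility claims and assemble them.

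The first is a direct transitivity observation. One cannot have (A) at an index $i$ and (B) at an index $j$ with $i<j$: combining $\{b\} >_F A_i$ with $A_j >_F \{b\}$ and the transitivity of each $F(\sigma)$ yields $A_j >_F A_i$, contradicting the hypothesis $A_i >_F A_j$. Consequently, if case (C) does not occur for any index, the indices carrying (B) form a (possibly empty) prefix of $\{1,\ldots,r\}$ and the remaining indices carry (A); choosing $s$ to be the length of that prefix gives conclusion~\eqref{eq:simple_ind}.

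The second and main technical claim is that (C) can hold for at most one index. Suppose, for contradiction, that $b \sim A_i$ and $b \sim A_j$ with $i<j$. From $b \sim A_i$ pick $c' \in A_i$ and a profile $\tau$ with $b>c'$ in $F(\tau)$; from $b \sim A_j$ pick $c \in A_j$ and a profile $\rho$ with $c>b$ in $F(\rho)$. I would then examine the restricted constitution $F_{\{c',b,c\}}$. The function $f^{c'>c}$ is the constant $+1$ (because $A_i >_F A_j$); applying $A_i >_F A_j$ and the transitivity of $F(\rho)$ gives $c'>b$ in $F(\rho)$, so $f^{b>c'}$ is non-constant; symmetrically, transitivity of $F(\tau)$ together with $A_i >_F A_j$ forces $b>c$ in $F(\tau)$, so $f^{b>c}$ is non-constant. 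Since $f^{c'>c}$ is constant, $F_{\{c',b,c\}}$ cannot be a dictator, so by Theorem~\ref{thm:3c} it must be degenerate, meaning that some alternative is always at the top or always at the bottom of the restricted output. Each of the six such possibilities directly contradicts one of the three constancy/non-constancy facts just established, producing the sought contradiction.

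The third claim handles the case where (C) occurs at some index $s$: if (A) held at some $i<s$, then $\{b\} >_F A_i >_F A_s$ would give $\{b\} >_F A_s$, contradicting $b \sim A_s$, and a symmetric argument rules out (B) at any $j>s$. Combined with the second claim, this forces (B) for all $i<s$, (C) at $s$, and (A) for all $j>s$, which is precisely conclusion~\eqref{eq:complicated_ind}. The main obstacle I foresee is the six-case bookkeeping in the second claim: each case is a one-line contradiction once the three facts are in place, but the crucial step is using $A_i >_F A_j$ in tandem with the transitivity of the particular witness outputs $F(\tau)$ and $F(\rho)$ to upgrade the one-directional witnesses into genuine non-constancy of the binary pairwise functions $f^{b>c'}$ and $f^{b>c}$.
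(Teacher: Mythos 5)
Your proof is correct, and it rests on the same key tool as the paper's, namely Theorem~\ref{thm:3c} applied to a well-chosen three-alternative restriction in which one pairwise comparison is forced to be constant. The bookkeeping, however, is organized differently. The paper's argument fixes a single index $s$ with $b \sim A_s$ and directly proves $\{b\} >_F A_{s+1}$ (and symmetrically $A_{s-1} >_F \{b\}$) by contradiction: it picks $a \in A_{s+1}$ with $a$ ranked above $b$ at some profile and $c \in A_s$ with $b$ above $c$ at another, so that $F_{\{a,b,c\}}$ is non-degenerate and hence a dictator by Theorem~\ref{thm:3c}, contradicting the constancy of $f^{c>a}$. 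Transitivity then propagates the two relations $A_{s-1} >_F \{b\} >_F A_{s+1}$ to the full chain in one stroke, which in particular implies that $\sim$ can occur at only one index. You instead isolate the uniqueness of $\sim$ as its own lemma, applying Theorem~\ref{thm:3c} to a restriction $\{c',b,c\}$ drawn from two candidate indices $i<j$ and deriving the contradiction from the degeneracy case analysis rather than from the dictator side; you then use two elementary transitivity claims to pin down (B) below $s$ and (A) above $s$. Both routes are valid. The paper's version proves a slightly stronger intermediate statement in one step and thus needs fewer auxiliary claims, while yours cleanly separates the single application of the three-alternative Arrow theorem from the purely combinatorial sandwiching; the case analysis in your Claim 2 is a bit more work, but each of the six degeneracy cases does indeed collapse against one of the three constancy/non-constancy facts, exactly as you describe.
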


\begin{proof}
Consider first the case where for all $s$ it does not hold that $b \sim A_s$.
In this case for all $s$ either $b >_F A_s$ or $A_s >_F b$. Since $b >_F A_s$ implies
$b >_F A_{s+1} >_F \ldots$ and $A_{s'} >_F b$ implies $\ldots >_F A_{s'-1} >_F A_{s'} >_F b$ for all
$s,s'$ by transitivity, equation~(\ref{eq:simple_ind}) follows.

Next assume $b \sim A_s$. We argue that in this case
\[
\ldots >_F A_{s-1} >_F \{b\} >_F A_{s+1} >_F \ldots,
\]
which implies~(\ref{eq:complicated_ind}).

Suppose by contradiction that $b >_F A_{s+1}$ does not hold. Then there exists an element $a \in A_{s+1}$
and a profile $\sigma$ where $F(\sigma)$ ranks $a$ above $b$. From the fact that $b \sim A_s$ it follows
that there exist $c \in A_s$ and a profile $\sigma'$ where $F(\sigma')$ ranks $b$ above $c$ above $a$.
We now look at the constitution $F$ restricted to $B = \{a,b,c\}$. For each of $a,b,c$ there exist at least one profile where they are not at the top/bottom of the social outcome. It therefore follows that Theorem~\ref{thm:3c} applies
to $F_B$ and that $F_B$ is a dictator. However, the assumption that $A_s >_F A_{s+1}$ implies that $c >_F a$.
A contradiction. The proof that $F_{s-1} >_F b$ is identical.
\end{proof}

\begin{Lemma} \label{lem:gen3}
Let $F$ be a constitution satisfying transitivity and IIA. Let $A$ be a set of alternatives such that
$F_A$ is a dictator and $b \sim A$. Then $F_{A \cup \{b\}}$ is a dictator.
\end{Lemma}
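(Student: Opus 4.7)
The plan is to apply Theorem~\ref{thm:3c} (the three-alternative characterization) to triples of the form $\{c,c',b\}$ with $c,c' \in A$, and then patch the resulting dictatorships together into a single dictatorship on $A \cup \{b\}$. Without loss of generality assume $F_A$ is a dictator on some voter $i$ in the non-reversed direction; the anti-dictator case is treated identically after flipping signs throughout.

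Let $a, a' \in A$ and profiles $\sigma, \sigma'$ be the witnesses for $b \sim A$, so that $F(\sigma)$ ranks $b$ above $a$ and $F(\sigma')$ ranks $a'$ above $b$. First I consider the triple $T_0 = \{a,a',b\}$; the restriction $F_{T_0}$ inherits IIA and transitivity from $F$. I claim $F_{T_0}$ is ND. Indeed, $F_{\{a,a'\}}$ is governed by voter $i$, so both orderings of $a$ and $a'$ occur as profiles vary, and hence neither $a$ nor $a'$ is always at the top or bottom of $T_0$; the profile $\sigma$ witnesses that $b$ is not always at the bottom of $T_0$, while $\sigma'$ witnesses that $b$ is not always at the top. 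Theorem~\ref{thm:3c} then yields that $F_{T_0}$ is a dictator, and agreement with the dictator $F_{\{a,a'\}}$ on the pair $\{a,a'\}$ forces the dictating voter (and direction) to be voter $i$. In particular $f^{a>b}(x^{a>b}) = x^{a>b}(i)$ for every profile.

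For any remaining $c \in A \setminus \{a, a'\}$, I repeat the argument on the triple $T_c = \{a,c,b\}$. Again $F_{T_c}$ is IIA and transitive, and ND holds because $F_{\{a,c\}}$ is the dictator on $i$ (so neither $a$ nor $c$ is extremal), and the identity $f^{a>b}(x^{a>b}) = x^{a>b}(i)$ just established lets me pick profiles in which voter $i$ prefers $a$ above $b$ (so $b$ is not always at the top of $T_c$) and profiles in which voter $i$ prefers $b$ above $a$ (so $b$ is not always at the bottom). Theorem~\ref{thm:3c}, combined with consistency on the pair $\{a,c\}$, yields that $F_{T_c}$ is the dictator on voter $i$, and hence $f^{b>c}(x^{b>c}) = x^{b>c}(i)$. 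Since every pair in $A \cup \{b\}$ is now decided by voter $i$ in the same direction, $F_{A \cup \{b\}}$ is the dictator on voter $i$, as required. The only real subtlety is verifying the ND witnesses for $b$ in each triple: these come directly from the $b \sim A$ hypothesis for $T_0$, and are then bootstrapped from the established behaviour of $f^{a>b}$ for each $T_c$, so no further technical obstacle arises.
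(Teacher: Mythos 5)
Your proof matches the paper's argument: apply Theorem~\ref{thm:3c} to a triple $\{a,a',b\}$ built from the $b\sim A$ witnesses (identifying the dictator as voter $i$ via agreement on $\{a,a'\}$, hence $f^{a>b}=x(i)$), then reuse Theorem~\ref{thm:3c} on $\{a,c,b\}$ for each remaining $c\in A$ to propagate the dictatorship. The one loose end is the tacit assumption $a\neq a'$; if the two witnesses coincide, one should instead take a third alternative $d\in A$ (available since the lemma is invoked only for $|A|\geq 3$) and use that $f^{a>b}$ is already non-constant to verify ND for $\{a,d,b\}$ --- but the paper's own terse proof is no more explicit on this point.
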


\begin{proof}
Assume without loss of generality that $F_A(\sigma) = \sigma(i)$. Let $a \in A$ be such that there exist a profile
where $F$ ranks $a$ above $b$ and $c \in A$ be such there exists a profile where $a$ is ranked below $c$. Let
$B = \{a,b,c\}$. Then $F_B$ satisfies the condition of Theorem~\ref{thm:3c} and is therefore dictator.
Moreover since the $f^{a>c}(x)=x(i)$ it follows that $f^{a>b}(x)=x(i)$ and $f^{b>c}(x)=x(i)$.
Let $d$ be any other alternative in $A$. Let $B = \{a,b,d\}$. Then since $f^{a>b}(x) = f^{a>d}(x) = x(i)$, the conditions of Theorem~\ref{thm:3c} hold for $F_B$ and therefore $f^{b>d}(x) = x(i)$. We have thus concluded that
$F_{A \cup\{b\}}(\sigma) = \sigma$ for all $\sigma$ as needed. The proof for the case where $F_A(\sigma) = -\sigma$
is identical.
\end{proof}

Theorem~\ref{thm:3c} also immediately implies the following:
\begin{Lemma} \label{lem:gen2}
Let $F$ be a constitution satisfying transitivity and IIA. Let $A$ be a set of two alternatives such that
$F_A$ is not constant and $b \sim A$. Then $F_{A \cup \{b\}}$ is a dictator.
\end{Lemma}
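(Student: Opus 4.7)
The plan is to reduce this immediately to Theorem~\ref{thm:3c}, applied to the restricted constitution $F_B$ where $B = A \cup \{b\}$. Since $|B| = 3$, and $F_B$ inherits transitivity and IIA from $F$, all that needs to be verified is that $F_B$ is non-degenerate, i.e., no alternative of $B$ is ranked at the top (or at the bottom) of $F_B(\sigma)$ for every profile $\sigma$.

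For the alternative $b$, non-degeneracy is essentially the hypothesis $b \sim A$ itself: by definition there exist profiles $\sigma, \sigma'$ and alternatives $a_1, a_2 \in A$ such that $F(\sigma)$ ranks $b$ above $a_1$ and $F(\sigma')$ ranks $a_2$ above $b$. The first profile prevents $b$ from always being at the bottom of $F_B$, and the second prevents $b$ from always being at the top. For the two elements $a, c$ of $A$, non-degeneracy is forced by the non-constancy of $F_A$: if, say, $a$ were always at the top of $F_B$, then in particular $F$ would always rank $a$ above $c$, so $f^{a>c}$ would be the constant $+1$, contradicting non-constancy of $F_A$; the analogous argument rules out the remaining three cases. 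Hence $F_B$ is non-degenerate.

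With transitivity, IIA and non-degeneracy in hand, Theorem~\ref{thm:3c} applies to $F_B$ and yields that $F_B = F_{A \cup \{b\}}$ is a dictator, which is exactly the conclusion of the lemma. There is no real obstacle here; the only substantive step is the bookkeeping that ties the two hypotheses ($F_A$ non-constant, and $b \sim A$) to the six conditions (top/bottom of $F_B$ for each of the three alternatives) needed to invoke Theorem~\ref{thm:3c}.
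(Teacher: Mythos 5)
Your proof is correct and follows exactly the route the paper intends: the paper presents Lemma~\ref{lem:gen2} with no written proof, merely asserting that it is an immediate consequence of Theorem~\ref{thm:3c}, and your argument is precisely the verification — that $F_A$ non-constant and $b\sim A$ together force non-degeneracy of $F_{A\cup\{b\}}$ — that the paper leaves to the reader.
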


We can now prove Theorem~\ref{thm:general_arrow}.
\begin{proof}
The proof is by induction on the number of alternatives $k$. The case $k=2$ is trivial. Either
$F$ always ranks $a$ above $b$ in which case $\{a\} >_F \{b\}$ as needed or $F$ is a non-constant function
in which case the set $A =\{a,b\}$ satisfies the desired conclusion.

For the induction step assume the theorem holds for $k$ alternatives and let $F$ be a constitution on $k+1$ alternatives which satisfies IIA and Transitivity. Let $B$ be a subset of $k$ of the alternatives and $b = A \setminus B$.

By by the induction hypothesis applied to $F_B$, we may write $B$ as a disjoint union of $A_1,\ldots,A_r$
such that $A_1 >_F A_2 > \ldots >_F A_r$ and such that if $A_s$ is of size $3$ or more then $F_{A_s}$ is a dictator and if $F_{A_s}$ is of size two then $F_{A_s}$ is non constant. We now apply Lemma~\ref{lem:gen_ind}. If~(\ref{eq:simple_ind}) holds then the proof follows. If~(\ref{eq:complicated_ind}) holds then the proof would follow once we show that $F_C$ is of the desired form where $C = A_s \cup \{b\}$. If $A_s$ is of size $1$ then from the definition of $\sim$ it follows that $F_{A_s \cup \{b\}}$ is non-constant as needed. If $A_s$ is of size $2$ then Lemma~(\ref{lem:gen2}) implies that $F_{A_s \cup \{b\}}$ is a dictator as needed and for the case of $A_s$ of size $3$ or more this follows from Lemma~(\ref{lem:gen3}). The proof follows.
\end{proof}

\section{Symmetric Distributions} \label{sec:sym}
In this section we provide some details on how to prove the results stated for general symmetric distributions. Most of the generalizations are straightforward. The main exception is Arrow theorem for low influences functions and the corresponding Gaussian result. These results require extension of the Invariance machinery and are developed in subsections~\ref{subsec:gauss_sym} and~\ref{subsec:low}.

\subsection{The Correlation Between $x^{a>b}$ and $x^{b>c}$}
The same proof of Lemma~\ref{lem:cor_vote} gives the following:

\begin{Lemma} \label{lem:cor_vote_general}
Assume that voters vote independently at random following a symmetric distribution $\mu$ $S(3)$ with minimal atom probability $\alpha$.
Then:
\begin{enumerate}
\item
For all $a,b$ and $i$ it holds that $\E[x^{a>b}(i)] = 0$.
\item
For all $i \neq j$ and all $a,b,c,d$ the variables $x^{a>b}(i)$ and $x^{c>d}(j)$ are independent.
\item
If $a,b,c$ are distinct then $|\E[x^{a>b}(i) x^{b>c}(i)]| \leq 1-4 \alpha$.
\end{enumerate}
\end{Lemma}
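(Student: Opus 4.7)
The plan is to use the symmetry hypothesis $\mu(\sigma) = \mu(-\sigma)$ for parts (1) and (2), and to reduce part (3) to a direct computation on $S(3)$ using the fact that negation is a fixed-point-free involution that pairs the six orderings of $\{a,b,c\}$ into three pairs of equal $\mu$-mass.

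For part (1), the involution $\sigma \mapsto -\sigma$ preserves $\mu$ but sends $x^{a>b}(i)$ to $-x^{a>b}(i)$, so $\E[x^{a>b}(i)] = -\E[x^{a>b}(i)] = 0$. Part (2) is immediate from independence of votes across different voters, since $x^{a>b}(i)$ is a function of $\sigma(i)$ alone while $x^{c>d}(j)$ is a function of $\sigma(j)$ alone. For part (3), $\E[x^{a>b}(i) x^{b>c}(i)]$ depends only on the marginal of $\mu$ on $\{a,b,c\}$, so it suffices to work on $S(3)$. The product $x^{a>b}(i) x^{b>c}(i)$ equals $+1$ precisely on the two orderings $a > b > c$ and $c > b > a$ (those in which $b$ lies in the middle), and equals $-1$ on the other four orderings. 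By symmetry, these six orderings split into three negation-pairs of equal $\mu$-mass; write $p = \mu(a > b > c) = \mu(c > b > a)$, and let $q, r$ denote the common $\mu$-masses on the two remaining negation-pairs. The normalization $2p + 2q + 2r = 1$ together with the atom bound $q, r \geq \alpha$ forces $\alpha \leq p \leq 1/2 - 2\alpha$, and hence
\[
\E[x^{a>b}(i)\, x^{b>c}(i)] = 2p - (1 - 2p) = 4p - 1 \in [\,4\alpha - 1,\; 1 - 8\alpha\,],
\]
whose absolute value is at most $1 - 4\alpha$.

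There is essentially no obstacle here: the whole argument is a bookkeeping exercise on $S(3)$, and the only substantive observation is that the event $\{x^{a>b} x^{b>c} = +1\}$ coincides with a single negation-pair, so that the correlation is controlled by the single parameter $p$ rather than by all of the symmetric distribution on $S(3)$.
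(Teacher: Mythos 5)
Your proof is correct and follows essentially the same route the paper intends: the paper simply writes ``the same proof of Lemma~\ref{lem:cor_vote} gives the result,'' i.e.\ reduce to $S(3)$ and compute directly, and you have carried out precisely that computation for a general symmetric $\mu$. Your observation that $x^{a>b}x^{b>c}=+1$ exactly on the negation pair $\{a>b>c,\ c>b>a\}$, so that the correlation is $4p-1$ with $p=\mu(a>b>c)\in[\alpha,\ 1/2-2\alpha]$, is the right bookkeeping and yields $|4p-1|\le 1-4\alpha$ as claimed.
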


Similarly ro Lemma~\ref{lem:cor_vote2} we obtain
\begin{Lemma} \label{lem:cor_vote2gen}
Assume that voters vote from a symmetric distribution $\mu$ on $S(3)$.  Let $f = x^{c>a}$ and
Let $(Tf)(x^{a>b},x^{b>c}) = \E[f | x^{a>b},x^{b>c}]$. Then
\[
| Tf |_2 \leq \sqrt{1 - 4 \alpha}.
\]
\end{Lemma}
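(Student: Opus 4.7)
My plan is to compute $\|Tf\|_2^2$ by direct enumeration over $S(3)$ and reduce everything to a harmonic-mean inequality. Since $|S(3)|=6$ and the symmetry $\mu(-\sigma)=\mu(\sigma)$ pairs the rankings into three classes, the distribution is parameterized by just three numbers.

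First I would list the six rankings together with their sign triples $(x^{a>b},x^{b>c},x^{c>a})$, which are exactly the triples in $\{-1,1\}^3 \setminus \{(1,1,1),(-1,-1,-1)\}$. Grouping by the value of $(x^{a>b},x^{b>c})$, I would observe that on the atoms $(+,+)$ and $(-,-)$ only a single ranking ($abc$ and $cba$ respectively) is consistent, so $Tf = -1$ and $Tf = +1$ there; on the atoms $(+,-)$ and $(-,+)$ two rankings contribute ($acb,cab$ and $bac,bca$) and $Tf$ is a nontrivial conditional average.

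Next I would use $\mu(-\sigma)=\mu(\sigma)$, which pairs $abc \leftrightarrow cba$, $acb \leftrightarrow bca$, $cab \leftrightarrow bac$, so that the distribution is determined by three numbers $\mu_1,\mu_2,\mu_3$ summing to $1/2$, each $\geq \alpha$. Here $\mu_1$ is the common weight of $abc$ and $cba$, while $\mu_2,\mu_3$ are the weights of the pairs $\{acb,bca\}$ and $\{cab,bac\}$. A short computation along the four atoms yields
\[
\|Tf\|_2^2 \;=\; 2\mu_1 \;+\; \frac{2(\mu_2-\mu_3)^2}{\mu_2+\mu_3} \;=\; 1 \;-\; \frac{8\mu_2\mu_3}{\mu_2+\mu_3},
\]
where the second equality uses $2\mu_1 = 1 - 2(\mu_2+\mu_3)$ and the identity $(\mu_2+\mu_3)^2-(\mu_2-\mu_3)^2 = 4\mu_2\mu_3$.

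To finish, I would need to show $8\mu_2\mu_3/(\mu_2+\mu_3) \geq 4\alpha$, i.e.\ that the harmonic mean of $\mu_2$ and $\mu_3$ is at least $\alpha$. Since the harmonic mean of two positives is at least their minimum, and both $\mu_2,\mu_3 \geq \alpha$ by definition of $\alpha$, this is immediate. Hence $\|Tf\|_2^2 \leq 1-4\alpha$, giving the stated bound. I do not anticipate a real obstacle; the only thing that requires care is the bookkeeping of signs to make sure the symmetric pairing is done consistently. As a sanity check, the uniform case $\mu_1=\mu_2=\mu_3=1/6$ (where $\alpha=1/6$) gives $\|Tf\|_2^2 = 1 - 4/6 = 1/3$, recovering Lemma~\ref{lem:cor_vote2}.
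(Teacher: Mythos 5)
Your proof is correct and follows the same direct-enumeration approach the paper gestures at (the paper merely asserts that ``the proof is identical to the previous proof,'' i.e., the uniform Lemma~\ref{lem:cor_vote2}). In fact you supply a detail the paper glosses over: in the general symmetric case the conditional expectation no longer vanishes on the mixed atoms $(+,-)$ and $(-,+)$, so the uniform argument does not literally carry over, and the reduction $\|Tf\|_2^2 = 1 - \frac{8\mu_2\mu_3}{\mu_2+\mu_3}$ together with the harmonic-mean inequality $\frac{2\mu_2\mu_3}{\mu_2+\mu_3} \geq \min(\mu_2,\mu_3) \geq \alpha$ is exactly the nontrivial step that makes the stated bound go through.
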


\begin{proof}
The proof is identical to the previous proof.
\end{proof}

\subsection{Two Influential Voters}
We briefly note that repeating the proofs of Lemma~\ref{lem:two_inf} and Theorem~\ref{thm:two_inf} we obtain the same results with
\begin{itemize}
\item
In Lemma~\ref{lem:two_inf} we obtain the lower bound
\[
\P[B] \geq \eps^{\frac{1}{2 \alpha}}.
\]
\item
In Theorem~\ref{thm:two_inf} we obtain the lower bound $P(F) > \beta^2 \eps^{\frac{1}{2 \alpha}}$, where $\beta = \alpha k! / 6$.
\end{itemize}

\subsection{Almost Transitive Functions}
We note that the same proof of Theorem~\ref{thm:3small} gives the following result for symmetric distributions.
\begin{Theorem} \label{thm:3smallsym}
Consider voting on $3$ alternatives where
each voter follows a symmetric voting distribution with minimal probability $\alpha$.
Let
\begin{equation} \label{eq:eps_bd_3small_sym}
\frac{\alpha^2}{9} > \eps > 0.
\end{equation}
For every $n$, if $F$ is a
constitution on $n$ voters satisfying:
\begin{itemize}
\item
IIA and
\item
\begin{equation} \label{eq:unif_small3n}
P(F) < \alpha^2 \eps^3 n^{-\frac{1}{2 \alpha}}.
\end{equation}
\end{itemize}
then there exists $G \in \F_3(n)$ satisfying $D(F,G) \leq 10 \eps$.
\end{Theorem}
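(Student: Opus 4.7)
The plan is to mirror the three-case analysis in the proof of Theorem~\ref{thm:3small}, substituting each uniform-distribution tool by its symmetric analogue developed earlier in this section. The key observation that lets much of the argument carry over is that for any symmetric distribution $\mu$ on $S(3)$, each pairwise preference vector $x^{a>b} \in \{-1,1\}^n$ is uniformly distributed on $\{-1,1\}^n$: its coordinates have mean zero by Lemma~\ref{lem:cor_vote_general} and are independent across voters. Consequently Lemma~\ref{lem:inf_sum} (which is stated for uniform Boolean inputs) applies verbatim to each of $f^{a>b}, f^{b>c}, f^{c>a}$ individually.

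Fix a threshold $\eta$ (calibrated below) and partition the possible $F$ into three cases as in Theorem~\ref{thm:3small}: Case~I, where some two distinct functions among $f^{a>b}, f^{b>c}, f^{c>a}$ have influences exceeding $\eta$ at two distinct voters; Case~II, where there is a voter $i$ such that every other voter has influence at most $\eta$ on every function; and Case~III, where some two distinct functions have all influences at most $\eta$. In Case~I, invoke the symmetric version of Theorem~\ref{thm:two_inf} (noted immediately after Lemma~\ref{lem:cor_vote2gen}) to get $P(F) \geq \alpha^2 \eta^{1/(2\alpha)}$; choosing $\eta$ of order $\eps^{6\alpha}/n$ makes this bound equal to $\alpha^2 \eps^3 n^{-1/(2\alpha)}$, contradicting the hypothesis on $P(F)$.

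In Case~II, the second part of Lemma~\ref{lem:inf_sum} produces functions $g^{a>b}, g^{b>c}, g^{c>a}$ depending only on voter $i$ with $D(F,G) \leq 6\eps$ for the resulting constitution $G$. If $G \notin \F_3(n)$, then $G$ is non-transitive on some ranking of voter $i$, which under $\mu$ has probability at least $\alpha$, and so $P(G) \geq \alpha$; but the hypothesis $\eps < \alpha^2/9$ (together with $\alpha < 1$, so $\alpha^2 < \alpha$) combined with $P(G) \leq P(F) + 6\eps < 9\eps$ yields $P(G) < \alpha$, forcing $G \in \F_3(n)$. Case~III proceeds exactly as in the uniform proof: either $f^{a>b}$ and $f^{b>c}$ are close to opposite constants (and one sets $g^{a>b}, g^{b>c}$ to those constants and $g^{c>a} = f^{c>a}$, yielding $G \in \F_3(n)$ with $D(F,G) \leq 4\eps$), or they are close to the same constant $s$, in which case transitivity of $F$ together with the union bound
\[
\P[f^{c>a} \neq -s] \leq \P[f^{a>b} \neq s] + \P[f^{b>c} \neq s] + P(F) \leq 6\eps
\]
forces $f^{c>a}$ to be close to $-s$, producing $G \in \F_3(n)$ with $D(F,G) \leq 10\eps$.

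The main obstacle is the joint calibration of $\eta$ so as to reconcile the two demands it must satisfy: the Case~I lower bound $\alpha^2 \eta^{1/(2\alpha)}$ scales with $\eta^{1/(2\alpha)}$ and degrades as $\alpha$ decreases, while Cases~II and~III require $n\eta$ small enough to deliver closeness on the order of $\eps$ via Lemma~\ref{lem:inf_sum}. Tracking the exponent $1/(2\alpha) \geq 3$ (with equality only in the uniform case) is what produces the $\alpha$-dependent exponent on $n^{-1}$ in the hypothesis, and the $\alpha^2$ prefactor is inherited from the constant $\beta = \alpha$ supplied by the symmetric version of Theorem~\ref{thm:two_inf}; any loss in this calibration is absorbed into the final bound $D(F,G) \leq 10\eps$.
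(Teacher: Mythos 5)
Your three-case decomposition is the right one, and the observations that each $x^{a>b}$ is marginally uniform on $\{-1,1\}^n$ and that Lemma~\ref{lem:inf_sum} therefore applies verbatim are correct. The gap is in the calibration of the influence threshold $\eta$, which you yourself flag as ``the main obstacle'' but then dismiss incorrectly. To make Case~I contradict the printed hypothesis $P(F)<\alpha^2\eps^3 n^{-1/(2\alpha)}$ you set $\eta$ of order $\eps^{6\alpha}/n$, since $\alpha^2\eta^{1/(2\alpha)}=\alpha^2\eps^3 n^{-1/(2\alpha)}$. But in Cases~II and~III the relevant influences are then only bounded by $\eta$, so Lemma~\ref{lem:inf_sum} yields $\sum_j I_j(f)\le n\eta=\eps^{6\alpha}$ and hence $D(f,g)\le 2\eps^{6\alpha}$, not $2\eps$. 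For $k=3$ alternatives one always has $\alpha\le 1/6$, so $6\alpha\le 1$ and thus $\eps^{6\alpha}\ge\eps$ for $\eps<1$ (strictly larger when $\alpha<1/6$). The bound you actually reach is $D(F,G)\le 10\eps^{6\alpha}$, which is \emph{weaker} than the claimed $D(F,G)\le 10\eps$; the ``loss'' is not absorbed, it goes in the wrong direction. It also silently invalidates the Case~II step that $P(G)<\alpha$ forces $P(G)=0$: you would now need $9\eps^{6\alpha}<\alpha$, which already fails for $\alpha=0.1$ with $\eps$ near $\alpha^2/9$.

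The paper's own proof is the one-line assertion that the proof of Theorem~\ref{thm:3small} carries over, which corresponds to keeping $\eta=\eps/n$. That choice gives $D(F,G)\le 10\eps$ in Cases~II and~III, but Case~I then yields $P(F)>\alpha^2(\eps/n)^{1/(2\alpha)}=\alpha^2\eps^{1/(2\alpha)}n^{-1/(2\alpha)}$. Since $1/(2\alpha)\ge 3$ and $\eps<1$, this lower bound is \emph{smaller} than $\alpha^2\eps^3 n^{-1/(2\alpha)}$, so the hypothesis as printed does not rule out Case~I. In fact no choice of $\eta$ makes both constraints work: one needs $\eps^{6\alpha}/n\le\eta\le\eps/n$, impossible for $\alpha<1/6$. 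This indicates a typo in the stated hypothesis, which should presumably read $P(F)<\alpha^2\eps^{1/(2\alpha)}n^{-1/(2\alpha)}$ (consistent with $\eta=\eps/n$ and the symmetric version of Theorem~\ref{thm:two_inf}). Rather than re-tuning $\eta$ to rescue the printed exponents, you should have noted the mismatch and given the corrected bound; the argument then goes through word-for-word as in Theorem~\ref{thm:3small}.
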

Theorem~\ref{thm:3smallsym} implies in turn the second assertion of Theorem~\ref{thm:ksmall}.

\subsection{The Gaussian Arrow Theorem} \label{subsec:gauss_sym}
We start by proving a version of Theorem~\ref{thm:arrow_gauss} for symmetric distributions.

Since averaging maintains the expected value and covariances, we define:
\begin{eqnarray} \label{eq:gauss_sym}
\E[N_1^2] = \E[N_2^2] = \E[N_3^2] = 1,\\ \nonumber
\E[N_1 N_2] = \E[x^{a>b}(1) x^{b>c}(1)] := \rho_{1,2},\\ \nonumber
 \E[N_2 N_3] = \E[x^{b>c}(1) x^{c>a}(1)] := \rho_{2,3},\\ \nonumber
\E[N_3 N_1] = \E[x^{c>a}(1) x^{a>b}(1)] := \rho_{3,1}. \nonumber
\end{eqnarray}
We let $N(1),\ldots,N(n)$ be independent copies of $N$.
We write $\CalN = (N(1),\ldots,N(n))$ and for $1 \leq i \leq 3$ we write $\CalN_i = (N(1)_i,\ldots,N(n)_i)$. The variant of Theorem~\ref{thm:arrow_gauss} we prove is the following.

\begin{Theorem} \label{thm:arrow_gauss_sym}
For every $\eps > 0$ there exists a $\delta = \delta(\eps) > 0$ such that the following hold.
Let $f_1,f_2,f_3 : \R^n \to \{-1,1\}$.
Assume that for all $1 \leq i \leq 3$ and all $u \in \{-1,1\}$ it holds that
\begin{equation} \label{eq:non_dict_normal_sym}
\P[f_i(\CalN_i) = u, f_{i+1}(\CalN_{i+1}) =-u] \leq 1-\eps
\end{equation}
Then with the setup given in~(\ref{eq:gauss}) it holds that:
\[
\P[f_1(\CalN_1) = f_2(\CalN_2) = f_3(\CalN_3)] \geq \delta.
\]
Moreover, one may take $\delta = (\eps/2)^{1/2 \alpha^2}$.
\end{Theorem}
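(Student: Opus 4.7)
The plan is to follow the proof of Theorem~\ref{thm:arrow_gauss} essentially verbatim, replacing the specific correlation constants $-1/3$ and $1/\sqrt{3}$ by the $\alpha$-dependent bounds supplied by Lemma~\ref{lem:cor_vote_general} ($|\rho_{i,j}|\le 1-4\alpha$) and Lemma~\ref{lem:cor_vote2gen} ($\|Tf\|_2\le\sqrt{1-4\alpha}$). The one genuinely new ingredient is to transfer Lemma~\ref{lem:cor_vote2gen} from the voting model to the Gaussian model: since $\CalN_1(i),\CalN_2(i),\CalN_3(i)$ are jointly Gaussian, $\E[\CalN_1(i)\mid \CalN_2(i),\CalN_3(i)]$ equals the best linear predictor, whose squared $L^2$ norm is a function of $\rho_{1,2},\rho_{2,3},\rho_{3,1}$ alone; these covariances are the same in the voting model, where the linear-predictor $L^2$ norm is bounded above by $\|Tf\|_2\le\sqrt{1-4\alpha}$ (the conditional expectation being the $L^2$-best predictor). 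Hence $\|\E[\CalN_1(i)\mid\CalN_2(i),\CalN_3(i)]\|_2\le\sqrt{1-4\alpha}$, and cyclically for the two other index triples.

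In Case~1 I would take $\P[f_1(\CalN_1)=1]>1-\eps/2$ (without loss of generality; the sign of $f_1$ and the choice of the extreme function are handled by symmetry). Condition~(\ref{eq:non_dict_normal_sym}) applied to $(i,u)=(1,1)$ and to $(i,u)=(3,-1)$ gives $\P[f_1=1,f_2=1]>(1-\eps/2)-(1-\eps)=\eps/2$ and $\P[f_1=1,f_3=1]>\eps/2$, so in particular $\P[f_2=1],\P[f_3=1]\ge\eps/2$. A first application of Lemma~\ref{lem:inv_hyp_gauss} with $|\rho_{2,3}|\le 1-4\alpha$ yields
\[
\P[f_2(\CalN_2)=f_3(\CalN_3)=1]\ge (\eps/2)^{2/(4\alpha)}=(\eps/2)^{1/(2\alpha)}.
\]
Now set $g=\mathbf{1}\{f_2=f_3=1\}$ and, coordinatewise, decompose $\operatorname{span}(\CalN_2(i),\CalN_3(i))=\R\CalM_1(i)\oplus\R\CalM_2(i)$ with $\CalM_1$ the unit direction of the projection of $\CalN_1$; by the Gaussian transfer above, $|\E[\CalN_1(i)\CalM_1(i)]|\le\sqrt{1-4\alpha}$ and $\E[\CalN_1(i)\CalM_2(i)]=0$. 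Adjoin an independent standard Gaussian $Z$ and apply Lemma~\ref{lem:inv_hyp_gauss} to the $2n$-dimensional vectors $(\CalN_1,Z)$ and $(\CalM_1,\CalM_2)$ (the pairwise independence required across all $2n$ coordinates holds because the four within-voter cross-covariances all vanish). The effective correlation is $\sqrt{1-4\alpha}$, and since $(1-\sqrt{1-4\alpha})(1+\sqrt{1-4\alpha})=4\alpha$ implies $2/(1-\sqrt{1-4\alpha})\le 1/\alpha$,
\[
\P[f_1=f_2=f_3=1]\ge\bigl((\eps/2)^{1/(2\alpha)}\bigr)^{1/\alpha}=(\eps/2)^{1/(2\alpha^2)}.
\]

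Case~2 is parallel: when all $|\E f_i|\le 1-\eps$, pigeonhole on $\P[f_i=1]$ gives two of them (WLOG $i=1,2$) exceeding $1/2$ while $\P[f_3=1]\ge\eps/2$. Lemma~\ref{lem:inv_hyp_gauss} yields $\P[f_1=f_2=1]\ge(1/2)^{1/(2\alpha)}$, and the same decorrelate-and-apply step produces $\P[f_1=f_2=f_3=1]\ge(\eps/2)^{1/\alpha}$, which already exceeds $(\eps/2)^{1/(2\alpha^2)}$ for $\alpha\le 1/2$. Setting $\delta=(\eps/2)^{1/(2\alpha^2)}$ completes the proof. The sole substantive obstacle is the Gaussian transfer of Lemma~\ref{lem:cor_vote2gen} in the first paragraph; once that bound is in place, each application of Lemma~\ref{lem:inv_hyp_gauss} at correlation $\sqrt{1-4\alpha}$ contributes a factor $1/\alpha$ in the exponent, and the two cascading applications in Case~1 combine $1/(2\alpha)$ with $1/\alpha$ to produce the $1/(2\alpha^2)$ exponent matching the stated $\delta$.
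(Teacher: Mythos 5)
Your proposal is correct and follows essentially the same route as the paper's proof: apply Lemma~\ref{lem:inv_hyp_gauss} first to $(f_2,f_3)$ with $|\rho_{2,3}|\le 1-4\alpha$, then decorrelate $\CalN_1$ from $\operatorname{span}(\CalN_2,\CalN_3)$ and apply the lemma again at effective correlation $\sqrt{1-4\alpha}$, using $1-\sqrt{1-4\alpha}\ge 2\alpha$ to turn the two exponents $1/(2\alpha)$ and $1/\alpha$ into the stated $1/(2\alpha^2)$. Your explicit ``Gaussian transfer'' argument (best linear predictor norm is covariance-determined, and is dominated in the voting model by $\|Tf\|_2$) supplies a justification that the paper only implicitly invokes when it cites Lemma~\ref{lem:cor_vote2gen} in the Gaussian setting; the only small blemish is that in Case~2 the intermediate bound $(\eps/2)^{1/\alpha}$ requires $\eps/2\le(1/2)^{1/(2\alpha)}$, but the complementary subcase gives $(1/2)^{1/(2\alpha^2)}\ge(\eps/2)^{1/(2\alpha^2)}$, so the stated $\delta$ survives in either event.
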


\begin{proof}
The proof is similar to the proof of Theorem~\ref{thm:arrow_gauss}. Note that if $\P[f_2 = 1] > \eps/2$ and $\P[f_3 = 1] > \eps/2$ then:
\[
\P[f_2 = 1, f_3 = 1] > (\eps/2)^{1/2\alpha}.
\]
Again we define $\CalM_1, \CalM_2$ so that $\CalM_2$ is uncorrelated with $\CalN$. Using Lemma~\ref{lem:cor_vote2gen} we obtain
that the correlation between $\CalM_1(i)$ and $\CalN(i)$ is at most $\sqrt{1-4\alpha}$. Using
$1-\sqrt{1-4\alpha} \geq 2 \alpha$ one then obtains
\[
\P[f_1 = 1, f_2 = 1, f_3 = 1] > (\eps/2)^{\frac{1}{2\alpha^2}}.
\]
\end{proof}

\subsubsection{Kalai's Formula and $[-1,1]$ Valued Votes.}
In this subsection we will give a more detailed description of the functions that achieve minimum probability of a paradox
in the Gaussian case. We will first generalize Lemma~\ref{lem:kalai}.



The same proof of Lemma~\ref{lem:kalai} gives the following:
\begin{Lemma} \label{lem:kalai_gauss}
Consider the setup of Theorem~\ref{thm:arrow_gauss}. Then:
\begin{equation} \label{eq:PG}
\P[f_1(\CalN_1) = f_2(\CalN_2) = f_3(\CalN_3)] =
\frac{1}{4} \left( 1 + \E[f_1(\CalN_1)f_2(\CalN_2)] + \E[f_2(\CalN_2) f_3(\CalN_3)] +
\E[f_3(\CalN_3)f_1(\CalN_1)] \right)
\end{equation}
\end{Lemma}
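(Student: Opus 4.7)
The plan is to mimic the proof of Lemma~\ref{lem:kalai} verbatim, since nothing there depends on the underlying probability space; only the algebraic identity matters. Let $s:\{-1,1\}^3\to\{0,1\}$ be the indicator of the set $\{(1,1,1),(-1,-1,-1)\}$, i.e.\ the set of triples whose coordinates all agree. The point is that
\[
\{f_1(\CalN_1)=f_2(\CalN_2)=f_3(\CalN_3)\}
\ =\ \{(f_1(\CalN_1),f_2(\CalN_2),f_3(\CalN_3))\in\{(1,1,1),(-1,-1,-1)\}\},
\]
so this event is captured exactly by $s$.

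Next I would verify by a trivial case check on the eight points of $\{-1,1\}^3$ the polynomial identity
\[
s(x,y,z)=\tfrac14\bigl(1+xy+yz+zx\bigr),
\]
which is the same identity used in the proof of Lemma~\ref{lem:kalai}. Indeed, when $x=y=z\in\{-1,1\}$ each of $xy,yz,zx$ equals $1$ and the right-hand side equals $1$; for any other triple exactly one of the three products equals $1$ and the other two equal $-1$, so the right-hand side equals $0$.

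Applying this identity with $(x,y,z)=(f_1(\CalN_1),f_2(\CalN_2),f_3(\CalN_3))$ and then taking expectations gives
\[
\P[f_1(\CalN_1)=f_2(\CalN_2)=f_3(\CalN_3)]=\E\,s(f_1(\CalN_1),f_2(\CalN_2),f_3(\CalN_3))
=\tfrac14\bigl(1+\E[f_1(\CalN_1)f_2(\CalN_2)]+\E[f_2(\CalN_2)f_3(\CalN_3)]+\E[f_3(\CalN_3)f_1(\CalN_1)]\bigr),
\]
which is exactly \eqref{eq:PG}. There is essentially no obstacle here: the only thing to note is that we have replaced the event "paradox" (all three pairwise comparisons forming a cycle) in Lemma~\ref{lem:kalai} with the event "all three values agree" in the Gaussian setting, but both events are $\{(1,1,1),(-1,-1,-1)\}$, so the same indicator and the same identity do the job. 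The proof is $\cong$ one line after reusing Lemma~\ref{lem:kalai}'s argument.
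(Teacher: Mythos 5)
Your proof is correct and is exactly the argument the paper has in mind: the paper simply remarks that ``the same proof of Lemma~\ref{lem:kalai} gives the following,'' and your write-up is that same indicator-and-polynomial-identity argument, carried over verbatim to the Gaussian inputs.
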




Given the basic voting setup, we {\em define} $P(f_1,f_2,f_3)$ for three function $f_1,f_2,f_3 : \{-1,1\}^n \to [-1,1]$ by letting
\begin{eqnarray*}
P(f_1,f_2,f_3) &=& \E[s(f_1(x^{a>b},f_2(x^{b>c}),f_3(x^{c>a})] \\ &=&
\frac{1}{4} \left( 1 + \E[f_1(x^{a>b})f_2(x^{b>c})] + \E[f_2(x^{b>c})f_3(x^{c>a})] +
\E[f_3(x^{c>a})f_1(x^{a>b})] \right).
\end{eqnarray*}
Similarly, given three function $f_1,f_2,f_3 : \R^n \to [-1,1]$ we {\em define}
\begin{eqnarray*}
P(f_1,f_2,f_3) &=& \E[s(f_1(\CalN_1,\CalN_2,\CalN_3))] \\
               &=& \frac{1}{4} \left( 1 + \E[f_1(\CalN_1)f_2(\CalN_2))] + \E[f_2(\CalN_2)f_3(\CalN_3)] +
\E[f_3(\CalN_3)f_1(\CalN_1)] \right),
\end{eqnarray*}
where $\CalN_i$ are define in~(\ref{eq:gauss}).

We will use the following lemma.
\begin{Lemma} \label{lem:s}
The functions $s : [0,1]^3 \to \R$ defined by $s(x,y,z) = 1/4(1 + xy + yz + zx)$ takes values in $[0,1]$.
Moreover, if $x,y,z$ take value at most $(1-\eps)$ (at least $-1+\eps$) then the function takes the value at least $\eps^2/4$.
\end{Lemma}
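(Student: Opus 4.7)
The plan is to reveal the factorized form
\[
s(x,y,z) = \tfrac{1}{8}\bigl[(1+x)(1+y)(1+z) + (1-x)(1-y)(1-z)\bigr],
\]
obtained by expanding the right side and noting that the $(x+y+z)$ and $xyz$ terms cancel. This immediately gives $s \geq 0$ on $[-1,1]^3$ since each factor in either product is non-negative. For $s \leq 1$, observe from the original expression $s = \tfrac14(1+xy+yz+zx)$ that $s$ is affine in each variable separately, so its maximum on $[-1,1]^3$ is attained at a vertex of $\{-1,1\}^3$. A direct check at the eight vertices yields $s=1$ exactly when $x=y=z$ and $s=0$ otherwise, so $s \in [0,1]$ throughout the cube.

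For the quantitative lower bound, using the symmetry $s(-x,-y,-z) = s(x,y,z)$ it suffices to treat the case $x,y,z \in [-1,\,1-\eps]$. Multilinearity again reduces the minimization to the eight vertices of $\{-1,\,1-\eps\}^3$, and I would evaluate the identity at each. At the two ``diagonal'' vertices where all three coordinates agree, one gets $s$ close to $1$; at the three ``two-high, one-low'' vertices such as $(1-\eps,\,1-\eps,\,-1)$, the factor $1+z = 0$ annihilates the first product while the second product equals $\eps \cdot \eps \cdot 2 = 2\eps^2$, giving $s = \eps^2/4$ exactly; and the three ``one-high, two-low'' vertices like $(1-\eps,\,-1,\,-1)$ give a single surviving product of size $4\eps$, hence $s = \eps/2$. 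The minimum over the box is therefore $\eps^2/4$, achieved at the two-high-one-low configurations. The symmetric case $x,y,z \in [-1+\eps,\,1]$ follows verbatim after flipping signs.

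There is no real obstacle here: the only slightly nontrivial moves are spotting the factorization and invoking multilinearity to push the optimization to the finitely many vertices. Once those two observations are in hand the result is a short finite case check, and the extremizer $(1-\eps, 1-\eps, -1)$ shows the constant $\eps^2/4$ is tight up to the exact factor.
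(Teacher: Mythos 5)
Your proposal is correct and follows essentially the same route as the paper: both arguments rest on the multilinearity of $s$ to reduce the extremization over the boxes $[-1,1]^3$ and $[-1,1-\eps]^3$ to a finite vertex check, arriving at the same list of vertex values $1,\; \eps/2,\; \eps^2/4,\; \tfrac14 + \tfrac34(1-\eps)^2$ and hence the same bound $\eps^2/4$. The only genuine addition is your factorization $s = \tfrac18[(1+x)(1+y)(1+z)+(1-x)(1-y)(1-z)]$, which makes the nonnegativity and the vertex evaluations cleaner but does not change the structure of the argument.
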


\begin{proof}
Both statements follow from the fact that $s$ takes the values $0,1$ on the vertices of the cube and that $s$ is affine in each of the coordinates. Note that on the vertices of the cube $[-1,1-\eps]^3$ the function $s$ takes the values $1,\eps/2,\eps^2/4$ and
$1/4 + 3/4 (1-\eps)^2$.
\end{proof}

We can now state and prove the $[-1,1]$ version of the Gaussian Arrow theorem.
\begin{Theorem} \label{thm:arrow_gauss_gen}
For every $\eps > 0$ there exists a $\delta > 0$ such that the following hold.
Let $f_1,f_2,f_3 : \R^n \to [-1,1]$.
Assume that for all $1 \leq i \leq 3$ and all $u \in \{-1,1\}$ it holds that
\begin{equation} \label{eq:non_dict_normal_gen}
\min(u \E[f_i], -u \E[f_{i+1}]) \leq 1-2\eps
\end{equation}
Then with the setup given in~(\ref{eq:gauss}) it holds that:
\[
\P(f_1,f_2,f_3) \geq \delta.
\]
Moreover, in the uniform case one may take $\delta = (\eps/2)^{20}$.
In the general case one may take $\delta = (\eps/2)^{2+1/(2 \alpha^2)}$.
\end{Theorem}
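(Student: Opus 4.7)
The plan is to exploit the algebraic identity
\[
s(x,y,z) = \frac{(1+x)(1+y)(1+z)+(1-x)(1-y)(1-z)}{8},
\]
which is verified by direct expansion (as in the discussion preceding Lemma~\ref{lem:s}) and which expresses $s$ as a sum of two non-negative terms on $[-1,1]^3$. Consequently
\[
P(f_1,f_2,f_3) \;=\; \E\Bigl[\tfrac{1+f_1(\CalN_1)}{2}\cdot\tfrac{1+f_2(\CalN_2)}{2}\cdot\tfrac{1+f_3(\CalN_3)}{2}\Bigr] + \E\Bigl[\tfrac{1-f_1(\CalN_1)}{2}\cdot\tfrac{1-f_2(\CalN_2)}{2}\cdot\tfrac{1-f_3(\CalN_3)}{2}\Bigr],
\]
so it suffices to lower-bound just one of these two summands by $\delta$.

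Next I would unpack hypothesis~(\ref{eq:non_dict_normal_gen}). Setting $m_i=\E f_i$, the condition says that for no pair $i\neq j$ and no sign $u\in\{-1,1\}$ do we have both $u m_i>1-2\eps$ and $-u m_j>1-2\eps$. Hence the two index sets $S^+=\{i:m_i>1-2\eps\}$ and $S^-=\{i:m_i<-(1-2\eps)\}$ cannot both be non-empty (every pair in $\{1,2,3\}$ is "adjacent" in the cycle). Using the symmetry $f_i\mapsto -f_i$, which preserves both $P$ and the hypothesis, we may assume $S^+=\emptyset$, so $\E\bigl[(1-f_i)/2\bigr]\geq\eps$ for every $i$.

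Now write $Y_i(\cdot)=(1-f_i(\cdot))/2\in[0,1]$ and set $B_i=\{Y_i(\CalN_i)\geq\eps/2\}$. Since $\E Y_i\geq\eps$ and $Y_i\leq 1$, Markov's inequality gives $\P[B_i]\geq\eps/2$. The pointwise inequality $Y_1 Y_2 Y_3 \geq (\eps/2)^{3}\,\one_{B_1\cap B_2\cap B_3}$ yields
\[
\E\bigl[Y_1(\CalN_1)Y_2(\CalN_2)Y_3(\CalN_3)\bigr]\;\geq\;(\eps/2)^{3}\,\P[B_1\cap B_2\cap B_3].
\]
To estimate $\P[B_1\cap B_2\cap B_3]$ from below I would reproduce verbatim the two-stage inverse-hypercontractivity argument used in the proofs of Theorems~\ref{thm:arrow_gauss} and~\ref{thm:arrow_gauss_sym}: first apply Lemma~\ref{lem:inv_hyp_gauss} to $B_2,B_3$ with coordinate correlation $|\rho_{2,3}|\leq 1-4\alpha$ (Lemma~\ref{lem:cor_vote_general}); then decompose $(\CalN_2,\CalN_3)$ coordinatewise into an orthonormal pair $(\CalM_1,\CalM_2)$ chosen so that $\CalM_2\perp\CalN_1$ while $|\E[\CalN_1(i)\CalM_1(i)]|\leq\sqrt{1-4\alpha}$ (Lemma~\ref{lem:cor_vote2gen}), and apply Lemma~\ref{lem:inv_hyp_gauss} a second time to $B_1$ versus the event $B_2\cap B_3$, after padding $\CalN_1$ with an independent Gaussian to match dimensions. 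Using $1-\sqrt{1-4\alpha}\geq 2\alpha$, this delivers $\P[B_1\cap B_2\cap B_3]\geq(\eps/2)^{1/(2\alpha^2)}$ in the general symmetric case, and $(\eps/2)^{18}$ in the uniform case (where $|\rho_{2,3}|=1/3$ and the effective correlation is $1/\sqrt{3}$, with $2/(1-1/\sqrt{3})<6$). Multiplying by the $(\eps/2)^3$ from the slicing step then yields the claimed bounds (up to the slack of a constant in the exponent inherent in our layer-cake step).

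The only real subtlety is the first move: translating hypothesis~(\ref{eq:non_dict_normal_gen}) into a single uniform sign-choice that makes $\E Y_i\geq\eps$ simultaneously for $i=1,2,3$. Once that sign is fixed, the proof collapses to the $\{-1,1\}$-valued Gaussian argument combined with an elementary non-negativity inequality, and no new machinery beyond Lemmas~\ref{lem:inv_hyp_gauss}, \ref{lem:cor_vote_general}, and~\ref{lem:cor_vote2gen} is required.
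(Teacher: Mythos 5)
Your proof is correct in its essential mechanism and relies on the same machinery as the paper: the two-stage inverse-hypercontractive argument via Lemma~\ref{lem:inv_hyp_gauss}, the correlation bounds of Lemmas~\ref{lem:cor_vote_general} and \ref{lem:cor_vote2gen}, and the splitting/reverse-Markov reduction to sets $B_i$ with $\P[B_i]\geq\eps/2$. What you do differently, and more cleanly, is the reduction: you observe that hypothesis~(\ref{eq:non_dict_normal_gen}), together with the fact that every pair in $\{1,2,3\}$ is adjacent in the cycle, forces $S^+=\{i:\E f_i>1-2\eps\}$ and $S^-=\{i:\E f_i<-(1-2\eps)\}$ not to be simultaneously nonempty, and then you use the $f_i\mapsto -f_i$ symmetry of both $s$ and the hypothesis to force $\E f_i\leq 1-2\eps$ for all $i$ at once. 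The paper instead runs an explicit two-case split (one $|\E f_i|>1-\eps$ versus all $\P[f_i\leq 1-\eps]\geq\eps/2$); your symmetry argument replaces both cases with a single one and is an improvement in presentation. Where you lose a factor is the final pointwise bound: you retain only one of the two nonnegative summands of
\[
s(x,y,z)=\tfrac18\bigl[(1+x)(1+y)(1+z)+(1-x)(1-y)(1-z)\bigr],
\]
yielding $s\geq Y_1Y_2Y_3\geq(\eps/2)^3$ on $B_1\cap B_2\cap B_3$. The paper instead applies Lemma~\ref{lem:s} to the whole of $s$ and gets $s\geq\eps^2/4=(\eps/2)^2$ on the same event; the extremal corner is of the form $(-1,1-\eps,1-\eps)$, where the term you discard is exactly what supplies the extra factor of $2/\eps$. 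This is why you land at $(\eps/2)^{21}$ and $(\eps/2)^{3+1/(2\alpha^2)}$ rather than the stated $(\eps/2)^{20}$ and $(\eps/2)^{2+1/(2\alpha^2)}$, as you yourself flag. The fix is immediate: having shown $\P[B_1\cap B_2\cap B_3]\geq(\eps/2)^{18}$ (resp.\ $(\eps/2)^{1/(2\alpha^2)}$) for $B_i=\{f_i\leq 1-\eps\}$, invoke Lemma~\ref{lem:s} to bound the full $s$ on that event instead of the single product term, and the claimed exponent is recovered.
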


\begin{proof}
We will prove the claim for the uniform case.
Again we consider two cases.
The first case to consider is where there exist a function $f_i$ and $u \in \{-1,1\}$ with $\P[|f_i-u| < \eps] > 1-\eps/2$.
Without loss of generality assume that $\P[f_1 \geq 1-\eps] > 1-\eps/2$ and note that this implies that $\E[f_1] > 1-2\eps$.
By~(\ref{eq:non_dict_normal_gen}) it therefore follows that $\E[f_2] > -1+2\eps$ and $\E[f_3] > -1+2\eps$.
We thus conclude that $\P[f_2 > -1+\eps] > \eps/2$ and $\P[f_3 > -1+\eps] > \eps/2$.
Let $B_1,B_2,B_3$ denote the sets where $f_1,f_2,f_3$ take value greater than $-1+\eps$ and let $B$ denote the intersection of these sets. Repeating the argument in the previous lemma we obtain
\[
\P[B] \geq (\eps/2)^{18}.
\]
By Lemma~\ref{lem:s} on the event $B$, the value of $s(f_1,f_2,f_3)$, is at least $\eps^2/2$, and on the complement of $B$, it is non-negative.
We thus conclude
\[
P(f_1,f_2,f_3) \geq (\eps/2)^{20}.
\]
In the second case, all functions satisfy $\P[f_i \leq 1-\eps] \geq \eps/2$. Then, letting $A_i$ denote the event where
$f_i \leq 1-\eps$ and repeating the argument above, we obtain the same bound.
The proof for non-uniform distributions is identical.
\end{proof}

\begin{Remark} \label{rem:gen_gauss}
We briefly note that Theorem~\ref{thm:arrow_gauss_gen} and the other theorems proven in this section hold in further generality, where voters vote independently according to any (perhaps not symmetric) distribution over the rankings where the probability of any ranking
is at least $\alpha$. The proof of these extensions is identical to the proofs provided here.
\end{Remark}

\subsection{Arrow Theorem for Low Influence Functions} \label{subsec:low}
The proof for symmetric distributions is way more involved than the proof for uniform distributions. The main difference between the two cases is while in the uniform case in the expansion~(\ref{eq:PF}) each correlation factor is asymptotically minimized by symmetric monotone threshold functions and therefore the overall expression is also minimized by symmetric monotone threshold functions.

In the general symmetric case, it is impossible to apply invariance to pairs of functions as one of the correlations parameters
in~(\ref{eq:PF}) may be positive in which case it is {\em maximized} (rather than minimized) by monotone symmetric threshold functions. To deal with this case we therefore derive an appropriate extension of invariance which may be of independent interest.
Roughly speaking the extension establishes a map
$\Psi$ mapping $B = \{ f : \{-1,1\}^n \to [-1,1] \}$ to $G = \{ f : \R^n \to [-1,1] \}$ such that for functions with low
influences $\E[fg]$ is close to $\E[\Psi(f) \Psi(g)]$ where the first expected value is with respect to two correlated input from the  uniform measure on $\{-1,1\}$ and the second is with respect to two correlated Gaussian in $\R^n$.


\subsubsection{Symmetric Distributions}

For the statement below we recall the notion of low degree influence. For a functions $f : \{-1,1\}^n \to \R$ where
$\{-1,1\}^n$ is equipped with the uniform measure, the degree $d$ influence of the $i$'th variable of $f$ is defined by:
\begin{equation} \label{eq:low_inf}
I^{\leq d}(f) = \sum_{S: |S| \leq d, i \in S} \hat{f}^2(S).
\end{equation}
Obviously $I_i^{\leq d}(f) \leq I(f)$. The usefulness of $I_i^{\leq d}$ to be used in the next subsection comes from the fact that
\begin{equation} \label{eq:low_inf_sum}
\sum_i I_i^{\leq d}(f) \leq d \cdot \Var[f].
\end{equation}

\subsubsection{Invariance Result}
Our starting point will be the following extensions of results from~\cite{MoOdOl:09}
and~\cite{Mossel:09}.

\begin{Theorem} \label{thm:invariance}
For all $\eps, -1 < \rho < 1$ the following holds.
Consider the space $\{-1,1\}^n$ equipped with the uniform measure and the space $\R^n$ equipped with the Gaussian measure.
Then for every function $f : \{-1,1\}^n \to [-1,1]$ there exists a function $\tilde{f} : \R^n \to [-1,1]$ such that the following hold.
Consider $(X,Y)$ distributed in $\{-1,1\}^n \times \{-1,1\}^n$ where $(X_i,Y_i)$ are independent with
\[
\E[X_i] = \E[Y_i] = 0, \quad \E[X_i^2] = \E[Y_i^2] = 1, \quad \E[X_i Y_i] = \rho.
\]
Consider $(N,M)$ jointly Gaussian and distributed in $R^n \times R^n$ with $(N_i,M_i)$ independent with
\[
\E[N_i] = \E[M_i] = 0, \quad \E[N_i^2] = \E[M_i^2] = 1, \quad \E[N_i M_i] = \rho.
\]
Then
\begin{itemize}
\item
For the constant functions $1$ and $-1$ it holds that $\tilde{1} = 1$ and $\tilde{-1} = -1$.
\item
If $f$ and $g$ are two functions such that for all $i$, it holds that $\max(I_i^{\log(1/\tau)}(f),I_i^{\log(1/\tau)}(g)) < \tau$ then
\begin{equation} \label{eq:inv}
|\E[f(X) g(Y)] - \E[\tilde{f}(N) \tilde{g}(M)]| \leq \eps.
\end{equation}
if
\begin{equation} \label{eq:tau_bound}
\tau \leq  \tau(\eps,|\rho|) := \eps^{C \frac{\log(1/\eps)}{(1-|\rho|)\eps}},
\end{equation}
for some absolute constant $C$.
\end{itemize}
\end{Theorem}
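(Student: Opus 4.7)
The plan is to reduce Theorem~\ref{thm:invariance} to the correlated-space invariance principle of~\cite{Mossel:09} (together with the nonlinear invariance of~\cite{MoOdOl:09}), augmented by a truncation step that forces the Gaussian surrogate $\tilde f$ to take values in $[-1,1]$. The bipartite structure is naturally viewed as $n$ i.i.d.\ draws of a pair: either $(X_i,Y_i) \in \{-1,1\}^2$ with the prescribed covariance, or $(N_i,M_i) \in \R^2$ with matching first and second moments. The question then reduces to an invariance statement between two product spaces whose marginals agree up through second moments, which is precisely the setting handled by the correlated invariance principle.

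For the construction of $\tilde f$, I would first smooth $f$ by the Bonami--Beckner operator, setting $f_\gamma = T_{1-\gamma} f$ for a small parameter $\gamma = \gamma(\eps,\rho)$ chosen below; since $T_{1-\gamma}$ is a convex averaging operator it preserves the range $[-1,1]$ and damps the degree-$k$ Fourier mass by $(1-\gamma)^k$. Write the multilinear expansion $f_\gamma(x) = \sum_{S} \wh{f_\gamma}(S) \prod_{i \in S} x_i$ and let $f_\gamma^G(x)$ denote the same polynomial evaluated at $x \in \R^n$. Finally set $\tilde f(x) = \chi(f_\gamma^G(x))$, where $\chi(t) = \max(-1,\min(1,t))$ is the clamp to $[-1,1]$, and similarly for $\tilde g$. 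Constants are preserved because they correspond to degree-$0$ polynomials, for which all three operations act trivially.

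To prove~(\ref{eq:inv}) I would decompose
\[
\E[f(X)g(Y)] - \E[\tilde f(N) \tilde g(M)] = E_{\mathrm{smth}} + E_{\mathrm{inv}} + E_{\mathrm{trnc}},
\]
where $E_{\mathrm{smth}}$ replaces $(f,g)$ by $(f_\gamma,g_\gamma)$, $E_{\mathrm{inv}}$ compares $\E[f_\gamma(X) g_\gamma(Y)]$ with $\E[f_\gamma^G(N) g_\gamma^G(M)]$, and $E_{\mathrm{trnc}}$ replaces the raw polynomials by their clamps. The Fourier identity $\E[f(X)g(Y)] = \sum_S \wh{f}(S)\wh{g}(S)\rho^{|S|}$ together with $(1-\gamma)^{|S|}$ decay handles $E_{\mathrm{smth}}$ once $\gamma \gtrsim \eps/\log(1/\eps)$. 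For $E_{\mathrm{inv}}$ I would invoke the correlated invariance principle of~\cite{Mossel:09} applied to the pair-valued coordinates, swapping $(X_i,Y_i) \to (N_i,M_i)$ one $i$ at a time via a Lindeberg argument; the per-swap Taylor error is a third-moment quantity which, via hypercontractivity applied to degree-$d$ truncations, is bounded by a product of the low-degree influences $I_i^{\leq d}(f_\gamma)\, I_i^{\leq d}(g_\gamma)$. Summing over $i$ and using $\sum_i I_i^{\leq d}(f_\gamma)\le d\cdot \Var(f_\gamma)$ yields a bound that is a positive power of $\tau$. For $E_{\mathrm{trnc}}$, I would use that $f_\gamma^G(N)$ has the same (Hermite) coefficients and hence the same second moment as $f_\gamma(X)\in[-1,1]$, combined with hypercontractive tail bounds for degree-$d$ polynomials under the Gaussian measure, to show $f_\gamma^G(N) \in [-1,1]$ except on an exponentially small set in $d$.

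The hard part will be $E_{\mathrm{trnc}}$: the Gaussian extension of a multilinear polynomial need not stay bounded by $1$, even though the original Boolean function does, so controlling the overshoot is what dictates how aggressively one smooths and caps the degree. The form of~(\ref{eq:tau_bound}) arises from chaining these three choices: the smoothing scale $\gamma \approx \eps/\log(1/\eps)$ controls $E_{\mathrm{smth}}$, the degree threshold $d \approx \log(1/\tau)/\gamma$ controls the hypercontractive tails behind $E_{\mathrm{trnc}}$, and the resulting invariance error $E_{\mathrm{inv}}$ then forces $\tau \le \eps^{C\log(1/\eps)/((1-|\rho|)\eps)}$, matching the exponent in the statement; the factor $(1-|\rho|)^{-1}$ enters from the hypercontractivity constant appropriate to the correlation $\rho$.
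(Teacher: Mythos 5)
Your three-term decomposition is close in spirit to the paper's argument, but you have misdiagnosed where the invariance principle is actually needed, and the argument you offer for the truncation error is wrong.

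First, the term you call $E_{\mathrm{inv}}$ is \emph{exactly zero}: if $p,q$ are multilinear polynomials, then
\[
\E[p(X)q(Y)] = \sum_S \wh{p}(S)\wh{q}(S)\,\rho^{|S|} = \E[p(N)q(M)],
\]
because the only moments that enter are first and second moments of the independent pairs $(X_i,Y_i)$ and $(N_i,M_i)$, and these match by hypothesis. The paper states this as the identity $\E[f'g'] = \E[T_{1-\eta}f(X)\,T_{1-\eta}g(Y)]$ with no error term. The Lindeberg swap you propose here is harmless but entirely superfluous, and more importantly it distracts from the actual source of error.

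Second, the term $E_{\mathrm{trnc}}$ is where the nonlinear invariance principle genuinely does the work, and the argument you give for it does not succeed. Knowing that $f_\gamma^G(N)$ has second moment at most $1$ and is a low-degree polynomial does \emph{not} imply, via hypercontractive tails, that it is in $[-1,1]$ except on a small set: hypercontractivity gives bounds like $\P[|p|>t\|p\|_2]\le e^{-c t^{2/d}}$, which is vacuous at $t=1$. Concretely, take $f(x)=x_1$ (a dictator). Then $T_{1-\eta}f(x)=(1-\eta)x_1$ has second moment less than $1$, is degree $1$, yet $(1-\eta)N_1$ exceeds $1$ in absolute value with constant probability. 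The conclusion $\E[(f_\gamma^G-\chi(f_\gamma^G))^2]$ is small is \emph{false} in general; it is true only under the low-influence hypothesis, and proving it requires the quantitative invariance principle (the paper cites Theorem 3.20 of~\cite{MoOdOl:09} for exactly this), which controls how far the distribution of a low-influence multilinear polynomial under the Gaussian measure can deviate from its distribution under the Boolean measure, hence from the interval $[-1,1]$. After that one applies Cauchy--Schwartz (not a union bound on tails) to pass from the $L^2$ estimate $\E[(f'-\tilde f)^2]<\eps^2/16$ to the covariance estimate $|\E[f'g']-\E[\tilde f\tilde g]|<\eps/2$. You should therefore move the invariance-principle invocation from $E_{\mathrm{inv}}$ (where it is not needed) to $E_{\mathrm{trnc}}$ (where it is the whole point), and discard the tail-bound argument.
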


\begin{proof}
We briefly explain how does this follow from the~\cite{MoOdOl:09} and~\cite{Mossel:09}.
Given $f$, we take small $\eta$ and look $T_{1-\eta} f$, where
$T$ is the Bonami-Beckner operator. For small $\eta$ and every two functions $f$ and $g$ it holds that
$\E[f(X) g(Y)]$ is $\eps/4$ close to $\E[T_{1-\eta}f(X) T_{1-\eta}g(Y)]$. By Lemma 6.1 in~\cite{Mossel:09} this can be done with
\[
\eta = C \frac{(1-|\rho|)\eps}{\log (1/\eps)}.
\]
$T_{1-\eta} f$ is given by a multi-linear polynomial which we can also write in terms of Gaussian random variables. Let's call the Gaussian polynomial $f'$. The polynomial $f'$ has the same expected value as $f$ but in general it takes values in all of $\R$.
Similarly for different $f$ and $g$ we have $\E[f' g'] = \E[T_{1-\eta}f(X) T_{1-\eta}g(Y)]$.
We let $\tilde{f}(x) = f'(x)$ if $|f'(x)| \leq 1$ and $f'(x) = 1$ ($f'(x) = -1$) if $f'(x) \geq 1$ ($f'(x) \leq -1$). It is easy to see that $\tilde{1} = 1$ and $\tilde{-1} = -1$.

By Theorem 3.20 in~\cite{MoOdOl:09} it follows that $\E[(f' - \tilde{f})^2] < \eps^2/16$  if all
influences of $f$ are bounded by $\tau$ given in~(\ref{eq:tau_bound}).
An immediate application of Cauchy-Schwartz implies that $\E[f'(N) g'(M)]$ is at most $\eps/2$ far from $\E[\tilde{f}(N) \tilde{f}(M)]$. We thus obtain Theorem~\ref{thm:invariance}.
\end{proof}

\subsubsection{Arrow Theorem for Low Influence Functions}
We can prove a quantitative Arrow theorem for low influence functions.
For the statement from this point on, it would be useful to denote
\[
\tau_{\alpha}(\delta) := \delta^{C \frac{\log(1/\delta)}{\alpha \delta}}, \quad
\tau(\delta) := \tau_{1/3}(\delta),
\]
for some absolute constant $C$.

\begin{Theorem} \label{thm:arrow_low_inf}
For every $\eps > 0$ there exists a $\delta(\eps) > 0$ and a $\tau(\delta) > 0$ such that the following hold.
Let $f_1,f_2,f_3 : \{-1,1\}^n \to [-1,1]$.
Assume that for all $1 \leq i \leq 3$ and all $u \in \{-1,1\}$ it holds that
\begin{equation} \label{eq:non_dict_low_inf}
\min(u \E[f_i], -u \E[f_{i+1}]) \leq 1-3\eps
\end{equation}
and for all $1 \leq i \leq 3$ and $1 \leq j \leq n$ it holds that
\[
I_j^{\log(1/\tau)}(f_i) < \tau,
\]
Then it holds that
\[
\P(f_1,f_2,f_3) > \delta.
\]
Moreover, assuming the uniform distribution, one may take:
\[
\delta = \frac{1}{4}(\eps/2)^{20}, \quad \tau = \tau(\delta,\frac{1}{3}).
\]
And assuming a general symmetric voting distribution with a minimal probability $\alpha$ for every
permutation, one can take:
\[
\delta = \frac{1}{4}(\eps/2)^{2+1/(2 \alpha^2)}, \quad \tau = \tau(\delta,1-4\alpha).
\]
\end{Theorem}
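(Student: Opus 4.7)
The strategy is to reduce the Boolean low-influence statement to the Gaussian Arrow theorem (Theorem~\ref{thm:arrow_gauss_gen}) via the invariance principle (Theorem~\ref{thm:invariance}), exploiting the representation of the paradox probability as a sum of pairwise correlations. The reason the $[-1,1]$-valued formulation is the right one is that the invariance principle naturally outputs $[-1,1]$-valued (not $\{-1,1\}$-valued) Gaussian proxies, and Theorem~\ref{thm:arrow_gauss_gen} is designed exactly for this case.

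First, I will invoke the analog of Kalai's formula for $[-1,1]$-valued functions (the formula for $P$ given just before Lemma~\ref{lem:s}) to expand
\[
P(f_1,f_2,f_3) \;=\; \tfrac14\Bigl(1+\E[f_1(x^{a>b})f_2(x^{b>c})]+\E[f_2(x^{b>c})f_3(x^{c>a})]+\E[f_3(x^{c>a})f_1(x^{a>b})]\Bigr).
\]
Under the product voting distribution, each pair $(x^{a>b}(j),x^{b>c}(j))$ is centered with covariance $\rho_{i,i+1}$ and, by Lemma~\ref{lem:cor_vote_general}, $|\rho_{i,i+1}|\leq 1-4\alpha$ (in the uniform case $|\rho_{i,i+1}|=1/3$). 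So each of the three correlation terms is exactly the quantity Theorem~\ref{thm:invariance} knows how to transfer to the Gaussian side.

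Second, I will apply Theorem~\ref{thm:invariance} to produce Gaussian proxies $\tilde f_1,\tilde f_2,\tilde f_3:\R^n\to[-1,1]$ (the construction in the proof of that theorem depends only on the function and a single noise parameter $\eta$, not on which pair we are transferring, so a common choice of $\tilde f_i$ works across the three pairwise applications). Choosing $\tau = \tau_\alpha(\delta_0/12)$ with $\delta_0$ the Gaussian lower bound from Theorem~\ref{thm:arrow_gauss_gen} and taking $\rho = \rho_{i,i+1}$, the bound~(\ref{eq:inv}) gives a transfer error at most $\delta_0/12$ for each of the three pairwise correlations, whence
\[
\bigl|P(f_1,f_2,f_3)-P(\tilde f_1,\tilde f_2,\tilde f_3)\bigr| \;\leq\; \tfrac{3}{4}\cdot\tfrac{\delta_0}{12} \;=\; \tfrac{\delta_0}{16},
\]
where the Gaussian $P(\tilde f_1,\tilde f_2,\tilde f_3)$ is defined exactly as in the paragraph preceding Lemma~\ref{lem:s}, using the covariance structure~(\ref{eq:gauss_sym}).

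Third, the non-degeneracy hypothesis~(\ref{eq:non_dict_low_inf}) transfers to the $\tilde f_i$: applying Theorem~\ref{thm:invariance} with the second function set to the constant $1$ (which maps to itself and has zero influences), one obtains $|\E[f_i]-\E[\tilde f_i]|\leq \eps$, so the assumption $\min(u\E[f_i],-u\E[f_{i+1}])\leq 1-3\eps$ yields $\min(u\E[\tilde f_i],-u\E[\tilde f_{i+1}])\leq 1-2\eps$, which is exactly the hypothesis of Theorem~\ref{thm:arrow_gauss_gen}. That theorem then gives $P(\tilde f_1,\tilde f_2,\tilde f_3)\geq \delta_0$, with $\delta_0=(\eps/2)^{20}$ in the uniform case and $\delta_0=(\eps/2)^{2+1/(2\alpha^2)}$ in general. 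Combining with the invariance error,
\[
P(f_1,f_2,f_3) \;\geq\; \delta_0 - \tfrac{\delta_0}{16} \;\geq\; \tfrac{\delta_0}{4},
\]
which matches the claimed $\delta$ (with room to spare).

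The main obstacle, and the reason this argument is more delicate than in the uniform-balanced case of earlier work, is that in the general symmetric setting the three correlations $\rho_{i,i+1}$ need not be negative and need not share signs; consequently one cannot separately minimize each pairwise correlation by symmetric threshold functions (as one could via Majority-is-Stablest in Theorem~\ref{thm:arrow_low_cross_unif}), and the correct tool is instead a two-sided invariance principle that preserves $\E[fg]$ rather than bounding it from one side. This is exactly what Theorem~\ref{thm:invariance} provides, and is the reason we work through it rather than through a direct Majority-is-Stablest lower bound.
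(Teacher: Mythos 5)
Your proof is correct and follows essentially the same route as the paper: expand $P(f_1,f_2,f_3)$ via the Kalai formula for $[-1,1]$-valued functions, apply Theorem~\ref{thm:invariance} to the three pairs $(f_i,f_{i+1})$ using a common set of Gaussian proxies $\tilde f_i$, transfer the non-degeneracy hypothesis via the $\rho=0$ case against the constant $1$, invoke Theorem~\ref{thm:arrow_gauss_gen}, and absorb the transfer error into the constant. The only differences are cosmetic (your error-budget bookkeeping is slightly more conservative than the paper's, which sets the invariance error directly to $\delta$ and concludes $P(f_1,f_2,f_3) > 4\delta - 3\delta/4 > \delta$).
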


\begin{proof}
Let $g_1 = \tilde{f_1},g_2 = \tilde{f_2}$ and $g_3 = \tilde{f_3}$, the functions whose existence is guaranteed by Theorem~\ref{thm:invariance}. We will apply the theorem for the pairs of functions $(f_1,f_2),(f_2,f_3)$ and
$(f_3,f_1)$ and the correlations given in~(\ref{eq:gauss}).
Taking $\rho = 0$ and noting that $T_{0} 1 = 1$ and $\tilde{1} = 1$, we conclude that for all $i$ it holds that $|\E[f_i] - \E[g_i]| < \eps$. It therefore follows from~(\ref{eq:non_dict_low_inf}) that the
functions $g_i$ satisfy (\ref{eq:non_dict_normal_gen}) and therefore
$P(g_1,g_2,g_3) \geq 4 \delta$, where the
correlations between the $g_i$'s are given by~(\ref{eq:gauss}).

Recall that:
\[
P(g_1,g_2,g_3) = \frac{1}{4}\left( \E[g_1(\CalN_1) g_2(\CalN_2)] + \E[g_2(\CalN_2) g_3(\CalN_3)] +
                                   \E[g_3(\CalN_3) g_1(\CalN_1)] \right).
\]
Applying theorem~\ref{thm:invariance} we see that
\[
|\E[g_1(\CalN_1) g_2(\CalN_2)] - \E[f_1(x^{a>b}) f_2(x^{b>c})]| < \delta,
\]
and similarly for the other expectations. We therefore conclude that
\[
P(f_1,f_2,f_3) > P(g_1,g_2,g_3) - 3 \delta/4 > \delta,
\]
as needed.
\end{proof}

\subsubsection{Arrow Theorem For Low Cross Influences Functions}
Our final result in the low influence realm deals with the situation that for each coordinate, at most one function has large influence while the two others have small influences. Such a case occurs for example when one function is a function of a small number of voters while the two others are majority type functions.
The main result of the current subsection shows that indeed is such situation there is a good probability of a paradox. The proof is based on extending an averaging argument from~\cite{Mossel:09}.

\begin{Theorem} \label{thm:arrow_low_cross}
For every $\eps > 0$ there exists a $\delta(\eps) > 0$ and a $\tau(\delta) > 0$
such that the following hold.
Let $f_1,f_2,f_3 : \{-1,1\}^n \to [-1,1]$.
Assume that for all $1 \leq i \leq 3$ and all $u \in \{-1,1\}$ it holds that

\begin{equation} \label{eq:non_top_bot_cross}
\min(u \E[f_i], -u \E[f_{i+1}]) \leq 1-3\eps
\end{equation}
and for all $j$ it holds that
\begin{equation} \label{eq:cross_inf}
|\{ 1 \leq i \leq 3 : I_j^{\log^2{1/\tau}}(f_i) > \tau \}| \leq 1.
\end{equation}
Then it holds that
\[
\P(f_1,f_2,f_3) \geq \delta.
\]
Moreover, assuming the uniform distribution, one may take:
\[
\delta = \frac{1}{8}(\eps/2)^{20}, \quad \tau = \tau(\delta,1/3).
\]
Assuming a general symmetric voting distribution with a minimal distribution $\alpha$ for every
permutation, one can take:
\[
\delta = \frac{1}{8}(\eps/2)^{2+1/(2 \alpha^2)}, \quad \tau = \tau(\delta,1-4 \alpha).
\]
\end{Theorem}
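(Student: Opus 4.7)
The plan is to run the same strategy as in the proof of Theorem~\ref{thm:arrow_low_inf}, but with an averaging step that handles the asymmetric high-influence coordinates. By the $[-1,1]$-valued version of Kalai's formula (Lemma~\ref{lem:kalai_gauss}),
\[
P(f_1,f_2,f_3) = \tfrac{1}{4}\bigl(1 + \E[f_1 f_2] + \E[f_2 f_3] + \E[f_3 f_1]\bigr),
\]
so the task reduces to producing Gaussian lifts $\tilde f_i : \R^n \to [-1,1]$ so that each of the three pairwise correlations is approximated to within $O(\delta)$, after which Theorem~\ref{thm:arrow_gauss_gen} takes over. Define $H_i = \{j : I_j^{\log^2(1/\tau)}(f_i) > \tau\}$; the cross-influence hypothesis asserts the $H_i$ are pairwise disjoint, so for every pair $(f_i, f_{i+1})$ and every coordinate $j$, at least one of $I_j^d(f_i), I_j^d(f_{i+1})$ is below $\tau$. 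Write $L = [n]\setminus(H_1 \cup H_2 \cup H_3)$.

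The main technical step is an extension of Theorem~\ref{thm:invariance} to this cross-influence regime: for every pair $(f,g)$ of $[-1,1]$-valued functions with $\min(I_j^d(f), I_j^d(g)) < \tau$ at every coordinate, one should still have $|\E[f(X) g(Y)] - \E[\tilde f(N) \tilde g(M)]| < \delta$. I would prove this by an averaging argument in the spirit of~\cite{Mossel:09}: for each $i$, define $\bar f_i$ by averaging $f_i$ over the coordinates in $H_{i+1} \cup H_{i+2}$, on which $f_i$ has low influence by the cross condition. The $\bar f_i$ then have uniformly low influences, so Theorem~\ref{thm:invariance} applies directly to each pair $(\bar f_i, \bar f_{i+1})$. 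The Fourier identity
\[
\E[f_i f_{i+1}] - \E[\bar f_i \, \bar f_{i+1}] = \sum_{S \not\subset L} \hat f_i(S)\,\hat f_{i+1}(S) \,\rho^{|S|}
\]
is then bounded via Cauchy--Schwarz combined with the cross low-degree influence estimates, the low-degree truncation $T_{1-\eta}$ (which makes the high-degree tail negligible), and the strict contraction $|\rho| \le 1 - 4\alpha < 1$ coming from Lemma~\ref{lem:cor_vote_general}.

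Once all three pairwise correlations have been transported to the Gaussian side, observe (by applying Theorem~\ref{thm:invariance} at $\rho = 0$) that $|\E[\tilde f_i] - \E[f_i]| < \delta$, so the gap between $3\eps$ in~(\ref{eq:non_top_bot_cross}) and $2\eps$ in~(\ref{eq:non_dict_normal_gen}) absorbs this shift and the non-degeneracy hypothesis transfers to the $\tilde f_i$. Theorem~\ref{thm:arrow_gauss_gen} then gives $P(\tilde f_1, \tilde f_2, \tilde f_3) \geq 8\delta$, and undoing the three pairwise invariance estimates (each of error at most $\delta$) through Kalai's formula yields $P(f_1,f_2,f_3) \geq 8\delta - 3\delta/4 \geq \delta$, which matches the stated uniform bound $\frac{1}{8}(\eps/2)^{20}$. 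The symmetric case proceeds identically using Lemma~\ref{lem:cor_vote2gen} and Theorem~\ref{thm:arrow_gauss_sym} in place of their uniform counterparts, and the exponent $2+1/(2\alpha^2)$ is inherited from Theorem~\ref{thm:arrow_gauss_sym}.

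The principal obstacle is the averaging step. The naive $L^2$ bound $\|f_i - \bar f_i\|_2^2 \leq \sum_{j \in H_{i+1} \cup H_{i+2}} I_j(f_i)$ is only of order the degree parameter $d = \log^2(1/\tau)$, since $|H_{i+1}|+|H_{i+2}|$ can be as large as $d/\tau$ while each per-coordinate bound is only $\tau$. To beat this one must exploit both the low-degree truncation and the strict contraction $|\rho|<1$ so that the contribution of a coordinate decays super-linearly in $I_j^d(\cdot)$; this quantitative balancing is precisely what forces the rapid decay $\tau = \tau(\delta, 1-4\alpha)$ that appears in the theorem statement.
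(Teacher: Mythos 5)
Your high-level strategy (average away the asymmetric high-influence coordinates, then invoke the low-influence result and Theorem~\ref{thm:arrow_gauss_gen}) is the same as the paper's, but the specific averaging step you propose is wrong in a way that cannot be patched, and the quantitative fix you sketch for the ``principal obstacle'' is not the one the paper uses.

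The fatal issue is the choice of averaging sets. You average each $f_i$ over $H_{i+1}\cup H_{i+2}$, the coordinates on which it already has low influence. After this operation $\bar f_i$ no longer depends on $H_{i+1}\cup H_{i+2}$, but it still depends on $H_i$, where its influences may be close to $1$ (averaging is a conditional expectation and never increases influences, but it also never decreases the ones you did not average out). So the claim ``the $\bar f_i$ then have uniformly low influences'' is false: $\bar f_1$ may still have high influence at every coordinate of $H_1$. Consequently you cannot apply Theorem~\ref{thm:arrow_low_inf} to $(\bar f_1,\bar f_2,\bar f_3)$, nor Theorem~\ref{thm:invariance} to the pair $(\bar f_1,\bar f_2)$: for $j\in H_1$ the quantity $\max(I_j(\bar f_1),I_j(\bar f_2))$ can still exceed $\tau$. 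The paper avoids this by averaging \emph{all three} functions over the \emph{same} set $S=S_1\cup S_2\cup S_3$, so that every $\bar f_i$ is genuinely low-influence everywhere. The price of averaging $f_i$ over its own high-influence set $S_i$ is not paid in $L^2$ distance $\|f_i-\bar f_i\|_2$ (which can indeed be order $1$), but in the pairwise correlation $\E[f_if_{i+1}]$, and that quantity is controlled by Lemma~\ref{lem:average} precisely because, for every $j\in S$, at most one of the two functions $f_i,f_{i+1}$ has high influence there (the three $S_i$ are disjoint). Your ``different averaging set per function'' scheme does not sit inside Lemma~\ref{lem:average} at all, since that lemma conditions both functions on the same set of coordinates.

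The second, related, problem is your proposed resolution of the quantitative obstacle. You correctly note that $|S|$ can be of order $d/\tau$ while each per-coordinate bound is $\tau$, so a naive $|S|\sqrt{\tau}$ estimate does not go to zero. But the paper's fix has nothing to do with the contraction $|\rho|<1$ decaying the contribution of each $S$ super-linearly. Instead it introduces a second, much smaller threshold: the hypothesis~(\ref{eq:cross_inf}) is used with a tiny $\tau'=\delta^{C_3\log(1/\delta)/(\alpha\delta)}$ (and degree $R'=\log^2(1/\tau)$), while the sets $S_i$ are defined with a much larger threshold $\tau=\delta^{C_2\log(1/\delta)/(\alpha\delta)}$ (and degree $R=\log(1/\tau)$). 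Disjointness of the $S_i$ then gives, for $j\in S_i$ and $i'\ne i$, $I_j(f_{i'})\le\tau'+(1-\eta)^{2R'}$ after the Bonami--Beckner smoothing, so Lemma~\ref{lem:average} yields an error $\lesssim \frac{R}{\tau}\sqrt{\tau'+(1-\eta)^{2R'}}$, which is made $\le\delta/16$ by choosing $C_3$ (and the smoothing parameter $\eta$) appropriately. It is this separation of scales between $\tau$ and $\tau'$ — not the fact that $|\rho|\le1-4\alpha<1$ — that makes the argument close. You would need to add both the common averaging set and the two-threshold bookkeeping to turn your sketch into a proof.
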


The proof will use the following lemma which is a special case of a lemma from~\cite{Mossel:09}.
\begin{Lemma} \label{lem:average}
Let $\mu$ be a distribution on $\{-1,1\}^2$ with uniform marginals.
Let $f_1,f_2 : \{-1,1\}^n \to [0,1]$. Let $S \subset [n]$ be a set
of coordinates such that for each $i \in S$ at most one of the functions
$f_1,f_2$ have $I_i(f_j) > \eps$. Define
\[
g_i(x) = \E[f_i(Y) | Y_{[n] \setminus S} = x_{[n] \setminus S}].
\]
Then the functions $g_i$ do not depend on the coordinates in $S$, are $[0,1]$ valued, satisfy $\E[g_i] = \E[f_i]$ and
\[
|\E[f_1(X) f_2(Y)]] - \E[g_1(X) g_2(Y)]| \leq |S| \sqrt{\eps},
\]
where $(X_i,Y_i)$ are independent distributed according to $\mu$.
\end{Lemma}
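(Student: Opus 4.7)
The three easy claims about $g_i$ follow immediately from the definition as a conditional expectation: $g_i$ depends only on coordinates outside $S$, inherits the $[0,1]$ range of $f_i$, and $\E[g_i]=\E[f_i]$ by the tower property. The substantive content is the bound $|\E[f_1(X)f_2(Y)] - \E[g_1(X)g_2(Y)]| \leq |S|\sqrt{\eps}$. I plan to prove this by a hybrid argument that averages out one coordinate of $S$ at a time, at each step averaging whichever of $f_1, f_2$ has small influence on the current coordinate.

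Partition $S = S_1 \sqcup S_2$ so that $I_i(f_1)\leq \eps$ for every $i\in S_1$ and $I_i(f_2)\leq\eps$ for every $i\in S_2$ (possible by hypothesis, breaking ties arbitrarily). Enumerate $S_1=\{i_1,\ldots,i_{k_1}\}$, $S_2=\{i_{k_1+1},\ldots,i_k\}$ with $k=|S|$. Define $(f_1^{(j)}, f_2^{(j)})$ starting from $(f_1,f_2)$: for $j\leq k_1$ replace $f_1^{(j-1)}$ by its conditional expectation over coordinate $i_j$ and leave $f_2$ alone; for $j>k_1$ perform the symmetric operation on $f_2$. For a step of the first kind, since $|f_2|\leq 1$ pointwise we may bound
\[
\bigl|\E\bigl[(f_1^{(j-1)}(X) - f_1^{(j)}(X))\,f_2^{(j-1)}(Y)\bigr]\bigr| \leq \E\bigl[|f_1^{(j-1)}(X) - f_1^{(j)}(X)|\bigr] \leq \|f_1^{(j-1)} - f_1^{(j)}\|_2 = \sqrt{I_{i_j}(f_1^{(j-1)})} \leq \sqrt{\eps},
\]
where the $L^1\leq L^2$ step uses that $X$ has uniform marginal, the identity $\|h-\E_i h\|_2^2=I_i(h)$ comes from Parseval, and the final inequality uses that conditional averaging only removes Fourier coefficients and hence only decreases influences. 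Steps of the second kind obey the symmetric bound, and summing over all $k$ steps gives the desired $|S|\sqrt{\eps}$.

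It remains to identify $\E[f_1^{(k)}(X) f_2^{(k)}(Y)]$ with $\E[g_1(X) g_2(Y)]$, where $f_1^{(k)} = \E_{S_1} f_1$ is independent of $X_{S_1}$ and $f_2^{(k)} = \E_{S_2} f_2$ is independent of $Y_{S_2}$. For $i \in S_1$, $X_i$ is absent from $f_1^{(k)}$, and the pair $(X_i, Y_i) \sim \mu$ is independent of the other coordinates, so integrating it out is equivalent to averaging $f_2^{(k)}$ over $Y_i$ against its uniform marginal; performing this over every $i \in S_1$ converts $f_2^{(k)}$ into $\E_{S_1}\E_{S_2} f_2 = g_2$. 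A symmetric argument over $i \in S_2$ converts $f_1^{(k)}$ into $g_1$, giving the claimed identity and finishing the proof.

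The main point to be careful about is the correlation between $X$ and $Y$: the hybrid step must handle it without losing anything. It does, because at each step the replacement acts on only one of the two arguments, so the other may be bounded pointwise by $1$, reducing the estimate to an $L^1$ quantity with respect to the uniform marginal of the modified variable. The remaining ingredients are bookkeeping; in particular, monotonicity of influences under partial averaging, used in the step bound, is immediate from the Fourier characterization of $I_i$.
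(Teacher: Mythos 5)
Your proof is correct, and it takes essentially the same approach as the paper: the paper reduces to $|S|=1$ and inducts, while you do the equivalent hybrid/telescoping argument one coordinate at a time, and both rest on the same two ingredients — the Cauchy--Schwarz bound $\|f - \E_j f\|_1 \le \|f - \E_j f\|_2 = \sqrt{I_j(f)}$ applied to whichever function has small influence on that coordinate, and the averaging identity $\E[f(X)\,\E_j g(Y)] = \E[\E_j f(X)\,\E_j g(Y)]$ exploiting uniform marginals of $\mu$. The only cosmetic difference is that you defer all the averaging identities to a single final identification step rather than interleaving them with the estimates.
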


\begin{proof}
Recall that averaging over a subset of the variables preserves expected value.
It also maintains the property of taking values in $[0,1]$
and decreases influences. Thus it suffices to prove the claim for the case
where $|S| = 1$. The general case then follows by induction.

So assume without loss of generality that $S=\{1\}$ consists of the first
coordinate only and that $I_2(f_2) \leq \eps$, so
that $\E[(f_2-g_2)^2] \leq \eps$. Then by Cauchy-Schwartz we have
$\E[|f_2-g_2|] \leq \sqrt{\eps}$ and
using the fact that the functions are bounded in $[0,1]$ we obtain
\begin{equation} \label{eq:eps_r_wise}
|\E[f_1 f_2 - f_1 g_2]| \leq \sqrt{\eps}.
\end{equation}
Let us write $\E_1$ for the expected value with respect to the first variable.
Recalling that the $g_i$ do not depend on the first variable we obtain that
\[
\E_1[f_1 g_2] = g_2 \E_1[f_1] = g_1 g_2.
\]
This implies that
\begin{equation} \label{eq:prod_r_wise}
\E[f_1 g_2] = \E[g_1 g_2],
\end{equation}
and the proof follows from~(\ref{eq:eps_r_wise}) and(\ref{eq:prod_r_wise}).
\end{proof}

We can now prove Theorem~\ref{thm:arrow_low_cross}.

\begin{proof}
The proof will use the fact that the sum of low-degree influences~(\ref{eq:low_inf_sum}) together with the fact that averaging makes (standard influences) smaller. In order to work with these two notions of influences simultaneously we begin by replacing each functions $f_i$ with the function
$T_{1-\eta} f_i$ where as in Theorem~\ref{thm:invariance} we let
\[
\eta = C_1 \frac{ \alpha \delta}{\log (1/\delta)},
\]
where $C_1$ is large enough so that
\[
|\E[f_1(x^{a>b}) f_2(x^{b>c})] - \E[T_{1-\eta} f_1(x^{a>b}) T_{1-\eta} f_2(x^{b>c})]| < \delta/10,
\]
and similarly for other pairs of functions.
Thus it suffices to prove the theorem assuming that all function have total Fourier weight at most
$(1-\eta)^{2r}$ above level $r$ and therefore all functions $f_i$ and all variables $j$ satisfy
$I_j(f_i) \leq I_j^{\leq r}(f_i) + (1-\eta)^{2r}$.

Let $C_2$ be chosen so that the statement of Theorem~\ref{thm:arrow_low_inf} holds for $\tau$, where
\[
\tau = \delta^{C_2 \frac{\log(1/\delta)}{ \alpha \delta}}.
\]
Let
\[
R = \log(1/\tau), \quad
R' = \log^2(1/\tau).
\]
and choose $C_2$ and $C_3$ large enough so that
\[
\tau' = \delta^{C_3 \frac{\log(1/\delta)}{\alpha \delta}},
\]
satisfies
\[
\frac{3 R}{\tau} \sqrt{\tau' + (1-\eta)^{2R'}} \leq \frac{\delta}{16}.
\]

Assume that $f_i$ satisfy~(\ref{eq:cross_inf}), i.e., for all $j$:
\[
|\{ 1 \leq i \leq 3 : I_j^{R'}(f_i) > \tau' \}| \leq 1.
\]

We will show that the statement of the theorem holds for $f_i$.
For this let
\[
S_i = \{ i : I_i^{\leq R}(f_i) > \tau \},
\]
and $S = S_1 \cup S_2 \cup S_3$.
Since $R' \geq R$ and $\tau' \leq \tau$, the sets $S_i$ are disjoint.

Moreover,each of the sets $S_i$ is of size at most $\frac{R}{\tau}$.

Also, if $j \in S$ and $I_j^{\leq R}(f_i) > \tau$ then for $i \neq i'$ it holds that
$I_j^{\leq R}(f_{i'}) < \tau'$
and therefore $I_j(f_{i'}) \leq \tau' + (1-\gamma)^{2 R'}$.
In other words, for all $j \in S$ we have that at least two of the functions $f_i$ satisfy
$I_j(f_i) \leq \tau$.

We now apply Lemma~\ref{lem:average} with
\[
\bar{f}_i(x) = \E[f_i(X) | X_{[n] \setminus S} = x_{[n] \setminus S}].
\]
We obtain that for any pair of functions $f_i,f_{i+1}$ it holds that
\begin{equation} \label{eq:trunc_double_coef}
|\E[f_{i} f_{i+1}] - \E[\bar{f}_i \bar{f}_{i+1}]| \leq
\frac{2 R}{\tau} \sqrt{\tau' + (1-\eta)^{2 R'}}  \leq \frac{\delta}{16}.
\end{equation}
Note that the functions $\bar{f}_i$ satisfy that
$\max_{i,j}I_j(\bar{f}_i)) \leq \tau$.
This implies that the results of Theorem~\ref{thm:arrow_low_inf}
hold for $\bar{f}_i$. This together with~(\ref{eq:trunc_double_coef}) implies
the desired result.
\end{proof}

\begin{Remark} \label{rem:gen_inf}
 We note that Theorem~\ref{thm:arrow_low_cross} and the other theorems proven in this section hold in further generality, where voters vote independently according to any (perhaps not symmetric) distribution over the rankings where the probability of any ranking
is at least $\alpha$ with bounds on $\tau$ that are somewhat worse than those obtained here. The proof of these extensions is similar to the proofs presented here (recall Remark~\ref{rem:gen_gauss}). The main difference is since now the distributions of $x^{a>b}$ etc. are biased, the applications of invariance principle results in somewhat worse results.
\end{Remark}

\subsection{One Influential Variable} 
We note that Theorem~\ref{thm:arrow_one_inf} holds as stated for symmetric distributions with $\alpha$ being the minimum probability over all permutations and 
\[
\delta = (\eps/2)^{2+1/(2 \alpha^2)}, \quad \tau = \tau(\delta,1-4\alpha).
\]
The only difference in the proof is that instead of Theorem~\ref{thm:arrow_low_cross_unif}.
we use Theorem~\ref{thm:arrow_low_inf}.

\subsection{Quantitative Arrow Theorem for $3$ Candidates} \label{subsec:3}
We briefly state the generalization of Theorem~\ref{thm:3} to symmetric distributions.

\begin{Theorem} \label{thm:3g}
The statement of the Theorem~\ref{thm:3} holds true for symmetric distributions on $3$ alternatives with minimum probability for each
ranking $\alpha$ with
\begin{equation} \label{eq:del43}
\delta = \exp \left(-\frac{C_1}{\alpha \eps^{C_2(\alpha)}} \right),
\end{equation}
where $C_2(\alpha) = 3+1/(2 \alpha^2)$.
\end{Theorem}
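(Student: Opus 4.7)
The argument will mirror the proof of Theorem~\ref{thm:3} step by step, substituting each ingredient by its symmetric analogue already developed in Section~\ref{sec:sym}. Fix the three IIA preference functions $f^{a>b},f^{b>c},f^{c>a}$ and an influence threshold $\eta>0$ to be calibrated. Partition constitutions into the same three exhaustive cases as in Theorem~\ref{thm:3}:
\begin{itemize}
\item[(I)] there exist $i\neq j$ and distinct $f\neq g$ among the preference functions with $I_i(f)>\eta$ and $I_j(g)>\eta$;
\item[(II)] for every pair $f\neq g$ of preference functions and every $i$, $\min(I_i(f),I_i(g))<\eta$;
\item[(III)] there is a voter $j^*$ such that all influences $I_j(f_\ell)$ for $j\neq j^*$ lie below $\eta$.
\end{itemize}

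Each case is now controlled by the symmetric version of the corresponding ingredient. In case~(I) the symmetric refinement of Theorem~\ref{thm:two_inf} recorded in Section~\ref{sec:sym} gives $P(F)\geq \beta^2\eta^{1/(2\alpha)}$ with $\beta=\alpha\cdot k!/6$; the exponent $1/(2\alpha)$ comes directly from the inverse-hyper-contractivity bound~(\ref{eq:two_small}) together with the estimate $|\E[x^{a>b}(i)\,x^{b>c}(i)]|\leq 1-4\alpha$ from Lemma~\ref{lem:cor_vote_general}. In case~(II), apply the symmetric form of Theorem~\ref{thm:arrow_low_cross} with target $\delta_0=\tfrac{1}{8}(\eps/2)^{2+1/(2\alpha^2)}$ and low-degree-influence threshold $\tau(\delta_0,1-4\alpha)$; it returns either $P(F)\geq \delta_0$ or, when~(\ref{eq:non_top_bot_cross}) fails, forces each $f_i$ within $3\eps$ of a constant function, and the transitivity argument from the uniform proof then produces $G\in\F_3(n)$ with $D(F,G)<\eps$. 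Case~(III) is handled identically through the symmetric version of Theorem~\ref{thm:arrow_one_inf}.

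To synchronize the three cases, choose $\eta=\tau(\delta_0,1-4\alpha)=\delta_0^{C\log(1/\delta_0)/(4\alpha\delta_0)}$, so that the invariance hypotheses needed in cases~(II) and~(III) are active. The binding lower bound on $P(F)$ is then the one from case~(I); taking logarithms and using $\log(1/\delta_0)=\Theta_\alpha(\log(1/\eps))$ together with $1/\delta_0=\Theta_\alpha(\eps^{-(2+1/(2\alpha^2))})$ gives
\[
\log\bigl(1/P(F)\bigr)\;\lesssim\;\frac{\log^2(1/\delta_0)}{\alpha^2\,\delta_0}\;\lesssim\;\frac{\log^2(1/\eps)}{\alpha^2\,\eps^{2+1/(2\alpha^2)}}.
\]
Absorbing the $\log^2(1/\eps)$ factor into one extra power of $1/\eps$ produces the claimed bound $\delta=\exp(-C_1/(\alpha\,\eps^{3+1/(2\alpha^2)}))$; the exponent $C_2(\alpha)=3+1/(2\alpha^2)$ is exactly one larger than the exponent $2+1/(2\alpha^2)$ coming from Theorem~\ref{thm:arrow_low_cross}, the extra $1$ being the cost of absorbing the logarithmic factor.

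The main obstacle is bookkeeping rather than a new idea: a single choice of $\eta$ must simultaneously activate the low-influence hypothesis of Theorems~\ref{thm:arrow_low_cross} and~\ref{thm:arrow_one_inf} (forcing $\eta\leq\tau(\delta_0,1-4\alpha)$) and keep $\beta^2\eta^{1/(2\alpha)}$ above the final target $\delta$. The $1/(2\alpha)$ exponent in~(\ref{eq:two_small}) is what propagates into the quadratic $1/(2\alpha^2)$ dependence inside $C_2(\alpha)$, and the estimate $|\rho|\leq 1-4\alpha$ in Lemma~\ref{lem:cor_vote_general} is what forces the invariance threshold to carry its $1/\alpha$ dependence. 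Once this calibration is correctly performed, the proof is a near-verbatim rewriting of the uniform argument given in the proof of Theorem~\ref{thm:3}.
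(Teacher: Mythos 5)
Your proposal is correct and takes the same route as the paper's proof: the same three-case split by influence pattern, the same symmetric substitutes (Theorem~\ref{thm:arrow_low_cross}, the symmetric version of Theorem~\ref{thm:arrow_one_inf}, and the symmetric two-influential-voters bound $P(F)\geq\beta^2\eta^{1/(2\alpha)}$), and the same choice of $\eta=\tau(\delta_0,1-4\alpha)$ to activate the invariance hypotheses. You supply the constant calibration explicitly where the paper merely asserts the argument is identical to Theorem~\ref{thm:3}, and you use the correct exponent $\eta^{1/(2\alpha)}$ in case~(I), where the paper's sketch writes $\alpha^2\eta^3$ — evidently a carry-over typo from the uniform proof, since the symmetric two-influential-voters bound in Section~\ref{sec:sym} gives $\eta^{1/(2\alpha)}$, which for $\alpha\leq 1/6$ is the weaker (and hence correct) lower bound.
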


\begin{proof}
The proof is identical. 
In case~(\ref{eq:case1}), we now have 
\[
P(F) > \alpha^2 \eta^3.
\]
We thus obtain that $P(F) > \delta$ where $\delta$ is given~(\ref{eq:del43})
by taking larger values $C'_1$ and $C'_2$ for $C_1$ and $C_2$.

In case~(\ref{eq:case2}), by Theorem~\ref{thm:arrow_low_cross} it follows that either there exist a function $G$ which
always put a candidate at top / bottom and $D(F,G) < \eps$ (if~(\ref{eq:non_top_bot_cross}) holds), or
$P(F) > C \eps^{C_2(\alpha)} >> \delta$.

Similarly in the remaining case~(\ref{eq:case3}), we have by version of Theorem~\ref{thm:arrow_one_inf} for symmetric distributions 
that either $D(F,G) < \eps$ or $P(F) > C \eps^{C_2(\alpha)} >> \delta$. The proof follows.

\end{proof}

\subsection{Proof Concluded} 
The general version of Theorem~\ref{thm:k} reads:
\begin{Theorem} \label{thm:kg}
Theorem~\ref{thm:k} holds for symmetric distributions on $k$ alternatives with minimum probability for each
ranking $\beta$ and $\alpha=k!\beta/6$ and 
\begin{equation} \label{eq:del4}
\delta = \exp \left(-\frac{C_1}{\alpha \eps^{C_2(\alpha)}} \right),
\end{equation}
where $C_2(\alpha) = 3+1/(2 \alpha^2)$.
\end{Theorem}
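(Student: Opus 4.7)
The proof follows exactly the pattern established for the uniform case (Theorem~\ref{thm:k}), with Theorem~\ref{thm:3g} replacing Theorem~\ref{thm:3} as the black-box input for three alternatives. The key preliminary observation is that the restriction of $F$ to any three alternatives $A = \{a,b,c\}$ inherits both the IIA property and a suitable symmetric probability structure: if $\mu$ is a symmetric distribution on $S(k)$ with minimum probability $\beta$, then the induced marginal distribution on $S(3)$ (obtained by summing over the $k!/6$ extensions of each ranking of $A$) is again symmetric, and each of its atoms has probability at least $k!\beta/6 = \alpha$. Thus Theorem~\ref{thm:3g} applies to $F_A$ with parameter $\alpha$ and yields the correct $\delta$ from~(\ref{eq:del43}).

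The plan is as follows. Assume $P(F) < \delta$ with $\delta$ as in~(\ref{eq:del4}). First, for each pair of alternatives $a,b$, determine whether the pairwise preference function $f^{a>b}$ is $\eps/10$-close (say) to some dictator or constant function $g$; if so, set $g^{a>b} = g$ (noting that any two such "simple" functions are at distance at least $1/2$, so $g$ is unique when it exists), and otherwise set $g^{a>b} = f^{a>b}$. Let $G$ be the constitution defined by these $g^{a>b}$. Then by construction $D(F,G) < \binom{k}{2}\eps < k^2\eps$.

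Next, we must show $G \in \F_k(n)$, and for this it suffices by the characterization Theorem~\ref{thm:general_arrow} to verify that $G_A \in \F_3(n)$ for every triple $A = \{a,b,c\}$. Since $P(F_A) \le P(F) < \delta$, Theorem~\ref{thm:3g} furnishes $H_A \in \F_3(n)$ with $D(F_A, H_A) < \eps$. There are two cases. If $H_A$ is a dictator on some voter $j$, then each of $f^{a>b}, f^{b>c}, f^{c>a}$ is $\eps$-close to a dictator; by our rule for defining $g^{a>b}$, we then have $g^{a>b} = h^{a>b}_A$ and similarly for the other two pairs, giving $G_A = H_A \in \F_3(n)$. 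If instead $H_A$ is of the degenerate type that always ranks some alternative (say $a$) at the top or bottom, then $f^{a>b}$ and $f^{c>a}$ are each $\eps$-close to the appropriate constants $\pm 1$, and consequently $g^{a>b}$ and $g^{c>a}$ are set to those constants, again forcing $G_A \in \F_3(n)$.

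There is no substantive obstacle beyond the bookkeeping: the argument is entirely a lift of the proof of Theorem~\ref{thm:k} with the only genuinely new input being the marginal-distribution calculation showing that the symmetric structure and minimum-probability bound $\alpha = k!\beta/6$ are preserved under restriction to three alternatives, so that Theorem~\ref{thm:3g} can be applied with its stated constants $C_1$ and $C_2(\alpha)=3+1/(2\alpha^2)$. The dependency of $\delta$ on $\eps$ and $\alpha$ in~(\ref{eq:del4}) is then inherited verbatim from~(\ref{eq:del43}).
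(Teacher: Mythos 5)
Your proof is correct and follows the same approach as the paper: apply Theorem~\ref{thm:3g} to each triple of alternatives, after the key observation that the marginal of a symmetric distribution on $S(k)$ with minimum atom $\beta$ is symmetric on $S(3)$ with minimum atom $\alpha=k!\beta/6$. The paper states this in a single line (``as before'') delegating the bookkeeping to the earlier proof of Theorem~\ref{thm:k}; you have simply spelled out that bookkeeping, and your marginal-distribution calculation matches the paper's note exactly.
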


\begin{proof}
The proof follows by applying Theorem~\ref{thm:3g} to triplets of alternatives as before.
Note that when restricting to $3$ alternatives, the minimum probability assigned to each order is at least $\alpha$. 
\end{proof}

\section{Open Problems}
As a conclusion we want to mention some natural open problems.

\begin{itemize}

\item
We believe that the results obtained here hold also for non-symmetric distributions of rankings as long as the probability of every ranking is bounded below by some constant $\alpha$. Recalling remarks~\ref{rem:gen_gauss} and~\ref{rem:gen_inf}, we see that the main challenge in extending the results to this setup is extending the proof for the case where two different functions $f$ and $g$ have two different influential voters. The problem in extending this result is the lack of inverse-hyper-contraction results for biased measures on $\{-1,1\}^n$. Deriving such estimates is of independent interest.

\item
A second natural problem is to attempt and obtain other quantitative results in social choice theory using Fourier methods.
A natural candidate is the Gibbard-Satterthwaite Theorem~\cite{Gibbard:73,Satterthwaite:75}. A first quantitative estimates for $3$ alternatives was obtained in~\cite{FrKaNi:08}. As mentioned before, the results of~\cite{FrKaNi:08} are limited in the sense that they require neutrality and apply only to 3 candidates. It is interesting to explore if the full quantitative version of Arrow theorem proven here will allow to obtain stronger quantitative version of the Gibbard-Satterthwaite Theorem.

\end{itemize}

\bibliographystyle{abbrv}
\bibliography{all,my}
\end{document}